\newtheorem{theorem}{Theorem}[section]
\newtheorem{proposition}{Proposition}[section]
\newtheorem{lemma}{Lemma}[section]
\newtheorem{remark}{Remark}[section]
\newtheorem{definition}{Definition}[section]
\renewcommand{\epsilon}{\varepsilon}
\renewcommand{\thesection}{\arabic{section}}
\newcommand{\eqnsection}{
\renewcommand{\theequation}{\thesection.\arabic{equation}}
   \makeatletter
   \csname  @addtoreset\endcsname{equation}{section}
   \makeatother}
\newcommand{\abs}[1]{\left\vert #1\right\vert}
\newcommand{\R}{\mathbb{R}}
\newcommand{\N}{\mathbb{N}}
\newcommand{\Z}{\mathbb{Z}} 
\newcommand{\PR}{\mathbb{P}}
\newcommand{\ES}{\mathbb{E}}
\newcommand{\LR}{\mathcal{L}}
\newcommand{\IG}{\mathfrak{I}}
\newcommand{\card}{\mathop{\mathrm{card}}}
\newcommand{\1}[1]{{\mathbf 1}{\{#1\}}}
\author[G. Ben Arous]{G\'erard BEN AROUS}
\address{G\'erard Ben Arous, Courant Institute of Mathematical Sciences, New York University, New York, New York 10012
USA} \email{benarous@cims.nyu.edu}
\author[A. Fribergh]{Alexander FRIBERGH}
\address{Alexander Fribergh, Courant Institute of Mathematical Sciences, New York University, New York, New York 10012
USA} \email{fribergh@cims.nyu.edu}
\author[N. Gantert]{Nina GANTERT}
\address{Nina Gantert, Institut f\"ur Mathematische Statistik, Fachbereich Mathematik und Informatik, Universit\"at M\"unster, Einsteinstr. 62, 48149 M\"unster, Germany} \email{gantert@uni-muenster.de}
\author[A. Hammond]{Alan HAMMOND}
\address{Alan Hammond, Department of Statistics, University of Oxford, 1 South Parks road, Oxford, U.K.} \email{hammond@stats.ox.ac.uk}
\keywords{Random walk in random environment, Galton-Watson tree, Infinitely divisible distributions, Electrical networks} \subjclass[2000]{primary 60K37, 60F05, 60J80;
secondary 60E07}
\begin{document}

\title[]{\quad Biased random walks on \quad\quad\quad\quad \quad\quad\quad\quad Galton-Watson trees with leaves}

\begin{abstract}
We consider a biased random walk $X_n$ on a Galton-Watson tree with leaves in the sub-ballistic regime. We prove that there exists an explicit constant $\gamma= \gamma(\beta) \in (0,1)$, depending on the bias $\beta$, such that $|X_n|$ is of order $n^{\gamma}$. Denoting $\Delta_n$ the hitting time of level $n$, we prove that $\Delta_n/n^{1/\gamma}$ is tight. Moreover we show that $\Delta_n/n^{1/\gamma}$ does not converge in law (at least for large values of $\beta$). We prove that
along the sequences $n_{\lambda}(k)=\lfloor \lambda \beta^{\gamma k}\rfloor$,
$\Delta_n/n^{1/\gamma}$ converges to certain infinitely divisible laws. Key tools for the proof are the classical 
Harris decomposition for Galton-Watson trees, a new variant of regeneration times and the careful analysis of triangular arrays of i.i.d.~heavy-tailed random variables.
\end{abstract}

\maketitle

\section{Introduction and statement of the results}
\label{intro}

Consider a supercritical Galton-Watson branching process with generating function ${\bf f}(z)=\sum_{k \geq 0} p_k z^k$, i.e.~the offspring of all individuals are i.i.d.~copies of $Z$, where ${\bf P}[Z=k]=p_k$. We assume that the tree is supercritical and has leaves, i.e. ${\bf m}:={\bf E}[Z]={\bf f}'(1)\in (1,\infty)$ and $p_0>0$.
We denote by $q \in (0,1)$ the extinction probability, which is characterized by ${\bf f}(q)=q$. Starting from a single progenitor called root and denoted by $0$, this process yields a random tree $T$. We will always condition on the event of non-extinction, so that $T$ is an infinite random tree. We denote $(\Omega,{\bf P})$ the associated probability space:  
${\bf P}$ is the law of the original tree, conditioned on non-extinction. For a vertex $u \in T$, we denote by $\abs{u}=d(0,u)$ the distance of $u$ to the root.


For $\omega \in \Omega$, on the infinite Galton-Watson tree $T(\omega)$, we consider the
$\beta$--biased random walk as in \cite{LPP}. More precisely, we define, for $\beta>1$, a Markov chain $(X_n)_{n\in \N}$ on the vertices of $T$, such that if $u\neq 0$ and $u$ has $k$ children $v_1,\ldots, v_k$ and parent $\overleftarrow{u}$, then
\begin{enumerate}
\item $P[X_{n+1}=\overleftarrow{u}|X_n=u]=\frac 1 {1+ \beta k}$, 
\item $P[X_{n+1}= v_i |X_n=u] = \frac {\beta}{1+\beta k}$, for $1\leq i\leq k$,
\end{enumerate}
and from 0 all transitions to its children are equally likely. This is a reversible Markov chain, and as such, can be described as an electrical network with conductances $c(\overleftarrow{x},x):=\beta^{\abs{x}-1}$ on every edge of the tree (see ~\cite{LP} for background on electrical networks). 

We always take $X_0 = 0$, that is we start the walk from the root of the tree. We denote by $P^{\omega}[\cdot]$ 
the law of $(X_n)_{n=0,1,2, \ldots}$ and we define the averaged law as the semidirect product $\PR={\bf P} \times P^{\omega}$.

Many interesting facts are known about this walk (see~\cite{LPP}). As one might expect, it is transient. 
It is known that $\PR$-a.s., $|X_n|/n$ converges to a deterministic limit $v$. Moreover, the random walk is \emph{ballistic}, i.e. its limiting velocity $v >0$, if and only if $\beta < \beta_c = 1/{\bf f}'(q)$. In the \emph{subballistic} regime, i.e. if $\beta \geq \beta_c$, we have $v=0$. The reason for the subballistic regime is that the walk loses time in traps of the tree, from where it cannot go to infinity without having to go for a long time against the drift which keeps it into the trap. The hypothesis $p_0>0$ is crucial for this to happen.

As in all subballistic models, a natural question comes up: what is the typical distance of the walker from the root after $n$ steps?
This is the question we address in this paper.
We always assume that
\[
{\bf E}[Z^2] <\infty
\]
and 
\[
\beta > 1/ {\bf f}'(q)\, ,
\]
recalling that $1/ {\bf f}'(q) > 1$. We introduce the exponent
\begin{equation}{\label{notationgamma}}\gamma:= \frac{-\ln {\bf f}'(q)}{\ln \beta}=\frac{\ln \beta_c}{\ln \beta} < 1
\end{equation}
so that $\beta^\gamma =  1/ {\bf f}'(q)$.

Let $\Delta_n$ be the hitting time of the $n$-th level:
\[
\Delta_n=\inf\{i\geq 0: \abs{X_i}=n\}.
\]

\begin{theorem}
\label{tightness}
(i) The laws of $(\Delta_n/n^{1/\gamma})_{n\geq 0}$ under $\PR$ are tight.\\
(ii) The laws of $(|X_n|/n^{\gamma})_{n\geq 0}$ under $\PR$ are tight.\\
(iii) We have
\begin{equation}\label{magni}
\lim\limits_{n \to \infty} \frac{\ln |X_n|}{\ln n}=\gamma,\ \PR-\text{a.s.}
\end{equation}
\end{theorem}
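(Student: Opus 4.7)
My plan is to reduce Theorem~\ref{tightness} to the behavior of sums of i.i.d.\ heavy-tailed random variables, via a regeneration decomposition adapted to the Harris structure of $T$. The physical picture is that $X$ gets stuck in the finite ``traps'' hanging off the infinite backbone $T_\infty$ of $T$; a trap of depth $d$ detains the walk for a time of order $\beta^d$, by the electrical-network identity with conductances $c(\overleftarrow{x},x)=\beta^{|x|-1}$, while a trap has depth at least $d$ with probability of order $({\bf f}'(q))^d$. Since $\beta^\gamma = 1/{\bf f}'(q)$, these two effects combine to yield polynomial time tails of exponent $\gamma$, forcing $\Delta_n$ to be of order $n^{1/\gamma}$ and $|X_n|$ of order $n^\gamma$.

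Concretely, I would first apply the Harris decomposition to $T$ conditioned on non-extinction, splitting it into the backbone $T_\infty$ and i.i.d.\ finite traps attached at each backbone vertex. Next I would construct a sequence of backbone regeneration levels $0 = L_0 < L_1 < L_2 < \cdots$, at each of which the walk leaves level $L_k - 1$ for the last time, so that the excursions of $X$ between the successive hitting times of the $L_k$ are i.i.d.\ under $\PR$ (after an initial shift at $k=1$). Writing $T_k$ and $N_k := L_k - L_{k-1}$ for the duration and backbone length of the $k$-th excursion, the assumption ${\bf E}[Z^2]<\infty$ gives $N_1$ exponential tails, while the electrical argument sketched above yields the key polynomial tail estimate
\[
c_1 t^{-\gamma} \;\le\; \PR[T_1 > t] \;\le\; c_2 t^{-\gamma}, \qquad t \ge 1.
\]

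Given this decomposition, (i) follows from elementary tail bounds for sums of i.i.d.\ positive variables in the $\gamma$-stable domain of attraction ($\gamma<1$): $\Delta_n$ is essentially $\sum_{k=1}^{K(n)} T_k$ with $K(n):=\inf\{k : L_k \ge n\}$ satisfying $K(n)/n$ bounded away from $0$ and $\infty$ with probability tending to $1$, and then $\PR[\Delta_n > x n^{1/\gamma}] \le C n \PR[T_1 > cx n^{1/\gamma}] + o(1) \le C' x^{-\gamma} + o(1)$ gives upper-tail tightness, while $\PR[\Delta_n < \delta n^{1/\gamma}] \le \PR[\max_{k \le cn} T_k < \delta n^{1/\gamma}] + o(1) \to e^{-c''\delta^{-\gamma}}$ gives the matching lower-tail tightness as $\delta \to 0$. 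Part (ii) follows from (i) via the identity $\{\max_{i \le n}|X_i| \ge k\} = \{\Delta_k \le n\}$, which converts tightness of $\Delta_n/n^{1/\gamma}$ into tightness of $M_n/n^\gamma$ for $M_n := \max_{i \le n}|X_i|$; moreover $M_n - |X_n| = O(\log n)$ with high probability, because being in a trap of depth $d \gg \log n$ at time $n$ would require $\Delta_n \ge c \beta^d \gg n$. Part (iii) then follows by Borel-Cantelli along $n_j := 2^j$: the polynomial-in-$x$ tail estimates above are summable after substituting $x = j^A$ for $A$ large, yielding $\ln\Delta_{n_j}/\ln n_j \to 1/\gamma$ almost surely, and monotonicity of $\Delta$ interpolates to all $n$ while the $\Delta$--$M_n$ duality transfers the statement to $\ln|X_n|/\ln n \to \gamma$.

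The principal obstacle is the regeneration construction. In the subballistic regime the classical Lyons-Pemantle-Peres regenerations produce excursions of infinite mean, so the usual renewal-theoretic machinery is unavailable, and one must design a new variant that cleanly absorbs trap-visit times into single excursions while preserving the i.i.d.\ structure under $\PR$. A secondary technical point is controlling the tail of $T_1$ with matching upper and lower bounds of the correct exponent $\gamma$: this demands careful bookkeeping of which trap the walk actually enters during a given excursion, weighted by the quenched electrical hitting-time cost, and it is precisely the multiplicative oscillation of the constants $c_1, c_2$ in the tail estimate that will later obstruct exact (rather than subsequential) convergence of $\Delta_n/n^{1/\gamma}$.
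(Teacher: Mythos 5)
Your route is genuinely different from the paper's. The paper obtains (i) as a corollary of the much stronger Theorem~\ref{subsequ}: it sandwiches a general $n$ between two consecutive points of the subsequence $n_\lambda(k)=\lfloor\lambda{\bf f}'(q)^{-k}\rfloor$, invokes the subsequential convergence in law to the infinitely divisible limit, and uses continuity of that limit (see~(\ref{contofL})); (ii) and (iii) are then deduced by the same $\Delta$--$|X_n|$ duality and backtracking control (\ref{tail_backtrack}) that you propose. Your plan instead extracts only a two-sided tail bound $c_1t^{-\gamma}\le\PR[T_1>t]\le c_2t^{-\gamma}$ for a single regeneration block and runs the standard truncation/maximum argument for sums of i.i.d.\ heavy-tailed variables. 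If the tail bound were in hand, this would be a more self-contained and elementary proof of Theorem~\ref{tightness}, bypassing the entire limit-law machinery of Sections~\ref{secWinfty}--\ref{sect_sum_iid}; the reduction itself is correct (note only that your displayed inequality $\PR[\Delta_n>xn^{1/\gamma}]\le Cn\PR[T_1>cxn^{1/\gamma}]+o(1)$ bounds just the maximal summand and must be supplemented by the Markov bound on the truncated sum, using $E[T_1\1{T_1\le s}]\le Cs^{1-\gamma}$, which again requires the exponent $\gamma$ exactly).

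The substantive gap is that the tail estimate is asserted, not proved, and its upper bound at the \emph{exact} exponent $\gamma$ is where essentially all of the paper's work lies: an exponent $\gamma-\epsilon$ would make $n\,\PR[T_1>xn^{1/\gamma}]$ blow up in $n$ and the argument would collapse. To get $\PR[T_1>t]\le c_2t^{-\gamma}$ you must control, within one regeneration block, the number of traps (needs ${\bf E}[Z^2]<\infty$, cf.\ Lemma~\ref{A1}), the number of entries into each trap (geometric, cf.\ Lemma~\ref{tailW}), the quenched expected excursion time in a trap of height $h$ including the weights $\Lambda_i$ of the subtraps hanging off the spine (Lemmas~\ref{L} and~\ref{meanreturntime}), and the aggregate contribution of the many shallow traps (the content of Proposition~\ref{cuts}); the correlation between a trap's height and the backbone degree at its root must also be handled. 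Two smaller inaccuracies: your justification of $M_n-|X_n|=O(\log n)$ is off, since entering a trap \emph{increases} $|X|$ (traps consist of descendants), so the correct argument is that no trap of height $\gg\log n$ is met before level $n$ (cf.\ (\ref{tailmaj})) and that regeneration block lengths have exponential tails, not that a deep sojourn would cost time $\beta^d$; and $T_1$ is \emph{not} in the domain of attraction of a $\gamma$-stable law (that is exactly Theorem~\ref{non-conve}), though your argument only uses the two-sided power bounds, so this is harmless.
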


Of course, this raises the question of convergence in distribution of the sequence $(\Delta_n/n^{1/\gamma})_{n\geq 0}$.
The next theorem gives a negative answer.

\begin{theorem}
\label{non-conve}
For $\beta$ large enough, the sequence $(\Delta_n/n^{1/\gamma})_{n\geq 0}$ does not converge in distribution.
\end{theorem}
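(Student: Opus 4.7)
The plan is to leverage the subsequential convergence result of this paper (proved below): for each $\lambda>0$,
\[
\Delta_{n_\lambda(k)}/n_\lambda(k)^{1/\gamma}\xrightarrow[k\to\infty]{d}\mathcal{L}_\lambda,
\]
where $\mathcal{L}_\lambda$ is an explicit infinitely divisible distribution. Given this, Theorem \ref{non-conve} reduces to finding $\lambda_1\neq \lambda_2$ in $[1,\beta^\gamma)$ for which $\mathcal{L}_{\lambda_1}\neq \mathcal{L}_{\lambda_2}$: since $(n_{\lambda_1}(k))_k$ and $(n_{\lambda_2}(k))_k$ are both subsequences of $\N$, convergence of $\Delta_n/n^{1/\gamma}$ would force these two limit laws to coincide.

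To show $\mathcal{L}_{\lambda_1}\neq \mathcal{L}_{\lambda_2}$, I would analyse the L\'evy measure $\nu_\lambda$. Heuristically, $\Delta_n$ is dominated by escape times from traps (finite subtrees encountered along the spine): a trap of depth $h$ is met per spine step with probability of order $\beta^{-\gamma h}$ and takes a time of order $\beta^h$ to exit. Along $n=n_\lambda(k)$ and for traps of depth $h=k+j$, the number of such traps is approximately Poisson of mean $\asymp \lambda\beta^{-\gamma j}$, each contributing a normalized exit time close to $\lambda^{-1/\gamma}\beta^j$ times an $O(1)$ random factor. Thus $\nu_\lambda$ should decompose, informally, into "Poisson clusters" indexed by $j\in\Z$, whose positions scale as $\lambda^{-1/\gamma}$ and whose intensities scale as $\lambda$. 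I would then take $\lambda_2=\lambda_1\beta^{\gamma/2}$, so that the clusters of $\nu_{\lambda_2}$ lie halfway on the log scale between those of $\nu_{\lambda_1}$, and evaluate the Laplace exponent $\Psi_\lambda(t)=\int(1-e^{-tx})\,\nu_\lambda(dx)$ at a value $t_0$ tuned to the position of a cluster of $\nu_{\lambda_1}$. Provided the clusters are sharply separated on the log scale, this yields $\Psi_{\lambda_1}(t_0)\neq\Psi_{\lambda_2}(t_0)$, and uniqueness of the L\'evy-Khintchine representation gives $\mathcal{L}_{\lambda_1}\neq \mathcal{L}_{\lambda_2}$.

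The hard part is quantifying this log-scale concentration of trap escape times: uniformly for $\beta$ large, the distribution of $\tau_h/\beta^h$ (with $\tau_h$ the escape time from a trap of depth $h$) must have log-scale fluctuations strictly smaller than $(\gamma/2)\log\beta$, so that contributions from different $j$'s do not overlap once $\beta$ is large enough. This reduces to a careful study of the biased walk on a finite subcritical Galton-Watson tree, showing that escape is dominated by the ascent along the deepest branch and that this ascent time has only bounded multiplicative fluctuations around $\beta^h$. Once this quantitative separation is secured, $\mathcal{L}_{\lambda_1}\neq \mathcal{L}_{\lambda_2}$ follows and Theorem \ref{non-conve} is established.
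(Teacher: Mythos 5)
Your first paragraph is exactly the paper's strategy: convergence of $\Delta_n/n^{1/\gamma}$ would force all the subsequential limits $\mu_\lambda$ to coincide, equivalently the scaling identity $\LR_1(x)=\lambda^{\gamma}\LR_1(\lambda x)$ for all $\lambda\in[1,\beta)$, so it suffices to violate this identity for large $\beta$. Your heuristic picture of the L\'evy measure as a superposition of ``clusters'' at positions $\lambda^{-1/\gamma}\beta^j$ with intensities $\propto\lambda\beta^{-\gamma j}$ is also correct and matches the formula $\LR_1(x)=-(1-\beta^{-\gamma})\sum_k\beta^{\gamma k}\overline F_\infty(x\beta^k)$.

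However, the decisive step is deferred, and the concentration statement you propose to prove is false as stated. You ask that $\tau_h/\beta^h$ have ``log-scale fluctuations strictly smaller than $(\gamma/2)\log\beta$'' and ``only bounded multiplicative fluctuations around $\beta^h$.'' First, note that $\gamma\log\beta=-\log{\bf f}'(q)$ is a \emph{constant}, so your threshold does not grow with $\beta$; with your choice $\lambda_2=\lambda_1\beta^{\gamma/2}$ the cluster positions are offset by $\tfrac12\log\beta$ on the log scale, so the relevant (and achievable) requirement is fluctuations $o(\log\beta)$, i.e.\ uniform tightness of $\log(\tau_h/\beta^h)$, not fluctuations below a fixed constant. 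Second, bounded multiplicative fluctuations simply do not hold: the limit $\mathcal{Z}_\infty=\frac{S_\infty}{1-\beta^{-1}}\sum_{i\le\mathrm{Bin}(W_\infty,p_\infty)}{\bf e}_i$ of $\chi_1^*(n)/\beta^H$ has an atom at $0$ and a smooth, everywhere positive density on $(0,\infty)$ (Theorem \ref{infdivprop}(viii)), because the number of excursions from the bottom of the trap is geometric; so the ``clusters'' overlap at every $\beta$ and no sharp separation is available. The paper's proof shows you need much less: only (i) $\ES[\mathcal{Z}_\infty]\le\ES[S_\infty]E[G(p_\infty/3)]$ bounded uniformly in $\beta$ (Proposition \ref{Sinfty}, Lemma \ref{tailW}), which by Markov kills the terms $k\ge1$ of the sum as $\beta\to\infty$, and (ii) the uniform lower bound $\PR[\mathcal{Z}_\infty>0]\ge p_\infty\PR[W_\infty\ge1]\ge c>0$ (Lemma \ref{minorWinfty}) together with $\mathcal{Z}_\infty\ge{\bf e}_1$ on $\{\mathcal{Z}_\infty>0\}$, which forces $\overline F_\infty(\beta^{k-1/3})$ and $\overline F_\infty(\beta^{k-2/3})$ to the same limit $\PR[\mathrm{Bin}(W_\infty,p_\infty)>0]$ for every $k\le0$. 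Evaluating the scaling identity at $\lambda=\beta^{1/3}$, $x=\beta^{-2/3}$ and letting $\beta\to\infty$ then produces the factor ${\bf f}'(q)^{-1/3}\neq1$ multiplying a quantity bounded away from $0$, a contradiction. Until you either prove a uniform-in-$\beta$ log-scale tightness statement and carry out the Laplace-exponent separation honestly, or substitute an argument of the above type, the proof is incomplete at its crucial point.
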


However, we can establish convergence in distribution along certain subsequences.

\begin{theorem}
\label{subsequ}
For any $\lambda>0$, denoting $n_{\lambda}(k)=\lfloor \lambda {\bf f}'(q)^{-k} \rfloor$, we have
\[
\frac{ \Delta_{n_{\lambda}(k)}} {n_{\lambda}(k)^{1/\gamma}} \xrightarrow{d} Y_\lambda
\]
where the random variable $Y_\lambda$ has an infinitely divisible law $\mu_\lambda$.
\end{theorem}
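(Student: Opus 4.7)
Theorem~\ref{subsequ} is a heavy-tailed central-limit-type statement; the plan is to write $\Delta_n$ as a sum of i.i.d.\ positive random variables via a regeneration construction, show that each summand has a tail which is regularly varying of index $\gamma$ up to a multiplicative \emph{log-periodic} correction, and then apply the triangular-array convergence theorem to infinitely divisible laws along the sequence $n_\lambda(k)$, on which this correction is stationary. Harris's decomposition realizes $T$ as a backbone (the supercritical GW tree without leaves, conditioned on non-extinction) onto which independent subcritical \emph{traps} are grafted. The ``new variant of regeneration times'' announced in the abstract produces times $0=\rho_0<\rho_1<\rho_2<\ldots$ such that the increments $\tau_i := \rho_i-\rho_{i-1}$ are i.i.d.\ under $\PR$ and each $\tau_i$ is governed, up to negligible pieces, by the excursion of $X$ into a single trap. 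Ballistic motion along the backbone yields $K_n/n \to 1/\mu$ in $\PR$-probability, where $K_n$ counts regenerations completed by time $\Delta_n$ and $\mu := {\bf E}[\abs{X_{\rho_1}}]\in(0,\infty)$, and a routine maximal inequality gives
\[
\Delta_n \;=\; \sum_{i=1}^{K_n}\tau_i \;+\; o_\PR(n^{1/\gamma}).
\]

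\textbf{Tail of $\tau_1$.}
Subcriticality of the trap offspring law gives ${\bf P}[H\geq h]\sim c_0({\bf f}'(q))^h = c_0 \beta^{-\gamma h}$ for the depth $H$ of the trap visited in a regeneration block. The conductances $\beta^{\abs{x}-1}$ imply, by an electrical-network computation, that conditionally on $H=h$ and on the shape of the trap the time spent in the trap is of the form $\beta^h\cdot\xi_h$, where $\xi_h$ converges in distribution as $h\to\infty$ to a nondegenerate positive limit $\xi$. Summing over integer depths and over the shape distribution produces
\[
{\bf P}[\tau_1 > t] \;\sim\; t^{-\gamma}\, g(\log_\beta t)\qquad(t\to\infty),
\]
for some continuous, bounded, strictly positive, $1$-periodic function $g:\R\to(0,\infty)$. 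The $1$-periodicity of $g$ reflects the integrality of $H$ and is the precise reason why ${\bf P}[\tau_1>t]$ is \emph{not} regularly varying in the strict sense; it is also what forces the non-convergence in Theorem~\ref{non-conve}.

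\textbf{Triangular array along $n_\lambda(k)$.}
Set $a_k := n_\lambda(k)^{1/\gamma}$, so that $a_k \sim \lambda^{1/\gamma}\beta^k$. The defining feature of this subsequence is that the fractional part of $\log_\beta(t\,a_k) = \log_\beta t + k + \gamma^{-1}\log_\beta\lambda + o(1)$ is asymptotically independent of $k$. Therefore, for every $t>0$,
\[
K_{n_\lambda(k)}\cdot{\bf P}[\tau_1 > t\, a_k] \;\longrightarrow\; \frac{\lambda}{\mu}\, t^{-\gamma}\, g\bigl(\log_\beta t + \gamma^{-1}\log_\beta\lambda\bigr) \;=:\; \bar\nu_\lambda([t,\infty)).
\]
The measure $\bar\nu_\lambda$ is a valid Lévy measure on $(0,\infty)$ (finite on each $[\delta,\infty)$, and $\int (u\wedge 1)\,\bar\nu_\lambda(du)<\infty$), and the pointwise convergence above upgrades to vague convergence on $(0,\infty)$. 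By the classical triangular-array limit theorem for row sums of i.i.d.\ positive random variables (see e.g.\ Feller II, Ch.~IX, or Kallenberg's \emph{Foundations}), together with standard control of the truncated means to produce a deterministic drift, we conclude
\[
a_k^{-1}\sum_{i=1}^{K_{n_\lambda(k)}}\tau_i \;\xrightarrow{d}\; Y_\lambda,
\]
where $Y_\lambda$ has the infinitely divisible law with Lévy measure $\bar\nu_\lambda$ (infinite divisibility is automatic as a weak limit of row sums of an infinitesimal i.i.d.\ array). Combined with the remainder estimate of Step 1 this gives Theorem~\ref{subsequ}.

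\textbf{Main obstacle.}
The core difficulty is Step 2: establishing the sharp log-periodic tail of $\tau_1$ under the annealed law $\PR$ with sufficient precision and uniformity to drive Step 3. One must control the joint law of the depth $H$, the trap shape, and the excursion-time multiplier $\xi_h$ — not merely produce two-sided bounds of the form $c_1 t^{-\gamma}\le {\bf P}[\tau_1>t]\le c_2 t^{-\gamma}$ — rule out significant contributions to the tail from the coexistence of two or more moderately deep traps inside the same regeneration block, and design the regeneration construction to be sharp enough that $\tau_1$ inherits the full log-periodic structure of a single trap rather than a smeared version of it. The analysis of triangular arrays of i.i.d.\ heavy-tailed random variables announced in the abstract is exactly the tool that converts this precise tail information into the infinitely divisible limit $\mu_\lambda$.
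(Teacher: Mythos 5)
Your outline follows the same architecture as the paper's proof: reduce $\Delta_n$ to a sum of i.i.d.\ contributions coming from deep traps, identify the time in a trap of depth $H$ as $\beta^H$ times a multiplier converging in law, and feed this into a classical triangular-array theorem for infinitely divisible limits, exploiting the fact that along $n_\lambda(k)$ the lattice phase is frozen. The organizational difference is where the triangular array lives. You sum the \emph{unconditioned} regeneration durations $\tau_i$ (law independent of $n$) over $\approx n/\mu$ blocks, which forces you to prove the exact log-periodic tail asymptotic $\PR[\tau_1>t]\sim t^{-\gamma}g(\log_\beta t)$. The paper never proves such an asymptotic: it first discards everything except the $h_n$-traps (with $h_n\approx(1-\epsilon)\log n/(-\log {\bf f}'(q))$, Proposition~\ref{cuts}), so the array consists of only $\approx \rho C_a n\,{\bf f}'(q)^{h_n}$ summands whose law is \emph{conditioned} on the trap being deep; the analytic input then reduces to (a) convergence in law of $\chi_1^*(n)/\beta^{H}$ under ${\bf Q}[\,\cdot\mid H=h^0_{n+k}]$ for each fixed $k$, and (b) a uniform stochastic domination with a finite moment of order $\gamma+\epsilon$ (Proposition~\ref{cvg_distribution} and Assumptions (1)--(2) of Theorem~\ref{sum_iid}). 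This "sliced" input is strictly weaker than your global tail asymptotic and is what makes the problem tractable; your route would in the end require the same ingredients plus an extra summation over depths to assemble $g$ and verify its continuity.

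Two concrete points you should repair. First, your limiting intensity is off by a factor of $\lambda$: with $a_k\sim\lambda^{1/\gamma}\beta^k$ and $K_{n_\lambda(k)}\sim\lambda\beta^{\gamma k}/\mu$, one gets $K_{n_\lambda(k)}\,\PR[\tau_1>ta_k]\to \mu^{-1}t^{-\gamma}g(\log_\beta t+\gamma^{-1}\log_\beta\lambda)$ — the powers of $\lambda$ cancel and the $\lambda$-dependence survives only in the phase of $g$. This is not cosmetic: your $\bar\nu_\lambda$ with the extra $\lambda$ would violate the forced consistency $Y_{\beta^\gamma\lambda}\stackrel{d}{=}Y_\lambda$ (since $n_{\beta^\gamma\lambda}(k)=n_\lambda(k+1)$). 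Second, inside your "multiplier $\xi_h$" you have buried the number of distinct entries of the walk into the trap. This is not a routine part of the trap's "shape": conditioning a backbone vertex to carry a deep trap biases its offspring distribution and hence the law of the number of returns, and the paper needs all of Section~\ref{secWinfty} (the convergence $W_n\xrightarrow{d}W_\infty$, via the modified regeneration structure) to decouple this from the trap's interior. Your sketch should at least acknowledge this dependence, the fact that the first regeneration block has a different law, and the Anscombe-type sandwich needed because the number of summands $K_{n}$ is random (the paper handles this with the two-sided bounds of Proposition~\ref{approx_sum_iid}).
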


We now describe the limit laws $\mu_\lambda$. For some constants $\rho$ and $C_a$ (the constant $\rho$ is defined in~(\ref{notationrho}), the constant $C_a$ in Lemma~\ref{trapproba}),
we have 
\[
Y_\lambda = (\rho C_a\lambda)^{1/\gamma}\,\,  \widetilde{Y}_{(\rho C_a\lambda)^{1/\gamma}}
\]
where 
\[
\widetilde{Y}_\lambda \text { has the law } \IG(d_\lambda,0,\LR_\lambda)\, .
\]
The infinitely divisible law 
$\IG(d_{\lambda},0,\LR_\lambda)$
is given by its L\'evy representation (see \cite{Petrov}, p. 32).
More precisely, the characteristic function of $\IG(d_{\lambda},0,\LR_\lambda)$ can be written in the form
\[
\ES\left [e^{it\widetilde{Y}_\lambda}\right] = \int e^{itx}\IG(d_{\lambda},0,\LR_\lambda) (dx) = 
\exp\left(id_\lambda t + \int\limits_0^\infty \left(e^{itx}-1 - \frac{itx}{1+x^2}\right)d\LR_\lambda(x)\right)
\]
where $d_\lambda$ is a real constant and $\LR_\lambda$ a real function which is non-decreasing on the interval $(0, \infty)$ and satisfies $\LR_\lambda(x) \to 0$ for $x \to \infty$ and $\int \limits_0^a x^2 d\LR_\lambda(x) < \infty$ for every $a > 0$. Comparing to the general representation formula in \cite{Petrov}, p. 32, we here have that the gaussian part vanishes and $\LR_\lambda(x) =0$ for $x < 0$. The function $\LR_\lambda$ is called the L\'evy spectral function.
Note that $\LR_\lambda$ is not a L\'evy-Khintchine spectral function.

In order to describe $\LR_\lambda$, define the random variable
\begin{equation}\label{Zdef}
\mathcal{Z}_{\infty}=\frac{S_{\infty}}{1-\beta^{-1}} \sum_{i=1}^{\text{Bin}(W_{\infty},p_{\infty})} {\bf e}_i,
\end{equation}
where $p_{\infty}=1-\beta^{-1}$ is the escape probability of a $\beta$-biased random walk on $\N$. Further, the random variables ${\bf e}_i$ in (\ref{Zdef}) are i.i.d.~exponential random variables of parameter 1 and the non-negative random variables $({\bf e}_i)$, $W_{\infty}$ and $S_\infty$ in (\ref{Zdef}) are independent.
The random variables $S_{\infty}$ and $W_{\infty}$ will be described in (\ref{notationSinfty}) and Proposition \ref{Winfty} respectively. The random variable $\text{Bin}(W_{\infty},p_{\infty})$ has a Binomial law with parameters $W_\infty$ and $p_\infty$. Now, denoting by $\overline{F}_{\infty}(x)=\PR[\mathcal{Z}_{\infty}>x]$ the tail function of $\mathcal{Z}_{\infty}$, we have
\begin{theorem}\label{infdivprop}
For all $\lambda > 0$, the following statements hold.

(i) The L\'evy spectral function $\LR_1$ is given as follows.
\[
\LR_1(x)=\left\{ \begin{array}{ll}
                         0 & \text{ if $x<0$,}\\
                         \displaystyle{-(1-\beta^{-\gamma})\sum_{k\in \Z}\beta^{\gamma k}\overline{F}_{\infty}(x \beta^k)} & \text{ if $x>0$.}
                         \end{array} \right.
\]
In particular, $\LR_1(x) = \beta^\gamma \LR_1(\beta x)$.

(ii) For all $x\in \R$, \quad $\LR_{\lambda}(x)=\lambda^{\gamma} \LR_1(\lambda x)$. In particular,
$\LR_{\beta^j}(x)=\LR_1(x)$, for all integers $j$.\\
(iii) $d_\lambda$ is given by
\[
d_{\lambda}=\lambda^{1+\gamma}(1-\beta^{-\gamma})\sum_{k\in \Z}\beta^{(1+\gamma)k}E\Bigl[\frac {\mathcal{Z}_{\infty}}{(\lambda\beta^k)^2+\mathcal{Z}_{\infty}^2}\Bigr].
\]
(iv) $\LR_\lambda$ is absolutely continuous.\\
(v) The following bounds hold
\begin{equation}\label{Lbounds}
\frac{1}{\beta^\gamma}\ES[\mathcal{Z}_\infty^\gamma]\frac{1}{x^\gamma} \leq - \LR_1(x) \leq \ES[\mathcal{Z}_\infty^\gamma]\frac{1}{x^\gamma}\, .
\end{equation}
(vi) The measure $\mu_\lambda$ is absolutely continuous with respect to Lebesgue measure and has a moment of order $\alpha$ if and only if $\alpha < \gamma$. \\
(vii) When $\beta$ is large enough, $x^\gamma \LR_\lambda(x)$ is not a constant.\\
(viii) The random variable $\mathcal{Z}_{\infty}$ has an atom at $0$ and a smooth density $\psi$ on $(0, \infty)$. Further, $\mathcal{Z}_{\infty}$ has finite expectation. 
\end{theorem}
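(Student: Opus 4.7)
The plan is to derive all eight items from the asymptotic structure underlying Theorem \ref{subsequ}. That convergence is obtained, via the Harris decomposition of $T$ into an infinite backbone dressed by finite ``trap'' subtrees, by writing $\Delta_{n_\lambda(k)}$ as an i.i.d.~sum of single-trap sojourn times plus asymptotically negligible backbone contributions. The classical convergence criterion for triangular arrays to infinitely divisible laws (see \cite{Petrov}, Chapter IV) then identifies the L\'evy spectral function $\LR_\lambda$ from the tail of a single rescaled sojourn time and the number of traps visited before level $n_\lambda(k)$, together with the centering $d_\lambda$.

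For (i)--(iii): a trap of depth $D$ contributes a sojourn time whose rescaling by $a(k):=n_\lambda(k)^{1/\gamma}\sim\lambda^{1/\gamma}\beta^k$ is asymptotically distributed like $\beta^{D-k}\lambda^{-1/\gamma}\mathcal{Z}_\infty$, while the number of traps of depth $D$ visited before level $n_\lambda(k)$ is approximately $\rho C_a(1-\beta^{-\gamma})n_\lambda(k)\beta^{-\gamma D}$, by the geometric decay of extinction probabilities for the trap subtrees. Applying the triangular-array criterion, changing index to $\ell=k-D$, and summing yields the series formula for $\LR_1$ in (i); the self-similarity $\LR_1(x)=\beta^\gamma\LR_1(\beta x)$ corresponds to the index shift $k\mapsto k+1$. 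Assertion (ii) along powers of $\beta$ is the change-of-index identity $n_{\lambda\beta^j}(k)=n_\lambda(k-j)$, and the full scaling $\LR_\lambda(x)=\lambda^\gamma\LR_1(\lambda x)$ then follows by combining with the explicit expression of $Y_\lambda$ in terms of $\widetilde{Y}$. Formula (iii) for $d_\lambda$ is the truncated-mean centering arising in the triangular-array theorem, which by (i) and Fubini rewrites as the displayed integral.

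For (iv)--(vii): absolute continuity (iv) is immediate from the smoothness of the density of $\mathcal{Z}_\infty$ on $(0,\infty)$ established in (viii). For (v), Markov's inequality yields $\overline{F}_\infty(y)\leq\ES[\mathcal{Z}_\infty^\gamma]y^{-\gamma}$ (finite since $\gamma<1$ and $\ES[\mathcal{Z}_\infty]<\infty$); writing $\overline{F}_\infty(x\beta^k)=\int\1{y>x\beta^k}\,d\PR_{\mathcal{Z}_\infty}(y)$ and interchanging sum and integral, the key observation is that for fixed $y$ the inner sum $(1-\beta^{-\gamma})\sum_k\beta^{\gamma k}\1{y>x\beta^k}$ equals $\beta^{\gamma\lfloor\log_\beta(y/x)\rfloor}$, which lies in $[\beta^{-\gamma}(y/x)^\gamma,(y/x)^\gamma]$; integrating against $d\PR_{\mathcal{Z}_\infty}$ gives the two-sided bound. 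For (vi), absolute continuity of $\mu_\lambda$ follows from sufficient decay of its characteristic function through the L\'evy-Khintchine exponent, and the moment threshold $\alpha<\gamma$ from the tail $-\LR_\lambda(x)\asymp x^{-\gamma}$ combined with classical moment criteria for infinitely divisible laws. For (vii), the periodicity $h(\beta x)=h(x)$ with $h(x):=x^\gamma\LR_1(x)$ means that constancy of $h$ would force $\mu_1$ to be $\gamma$-stable, i.e.\ $\overline{F}_\infty(y)=cy^{-\gamma}$ for all $y>0$; this is incompatible with $\overline{F}_\infty(0)<1$, inherited from the atom at $0$ of $\mathcal{Z}_\infty$, and for large $\beta$ the oscillation of $h$ can be quantified by evaluating the defining series at two arguments where the atom and the bulk density contribute differently.

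For (viii): directly from (\ref{Zdef}), $\PR[\mathcal{Z}_\infty=0]=\ES[(1-p_\infty)^{W_\infty}]>0$, giving the atom. Conditionally on $\{\text{Bin}(W_\infty,p_\infty)=m\}$ with $m\geq 1$, the sum $\sum_{i=1}^m{\bf e}_i$ is $\Gamma(m,1)$-distributed, hence has a smooth density on $(0,\infty)$; mixing over $(S_\infty,W_\infty)$ and differentiating under the integral, justified by the exponential tails of the ${\bf e}_i$, yields smoothness of the density of $\mathcal{Z}_\infty$ on $(0,\infty)$. Finiteness of $\ES[\mathcal{Z}_\infty]$ reduces by independence and Wald's identity (with $p_\infty=1-\beta^{-1}$) to $\ES[\mathcal{Z}_\infty]=\ES[S_\infty]\ES[W_\infty]$, finite thanks to (\ref{notationSinfty}) and Proposition \ref{Winfty}. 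The main obstacle is (i), in which one must uniformly control the sojourn-time scaling $\beta^{D-k}\mathcal{Z}_\infty$ and the counts of traps of each depth $D$ simultaneously over an unbounded window of $D$ around $k$, in order to legitimately interchange sum and limit in the triangular-array criterion; (vii) is the second delicate point, demanding quantitative control on $\mathcal{Z}_\infty$ for large $\beta$.
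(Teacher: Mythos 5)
Your overall architecture for (i)--(vi) and (viii) matches the paper's: (i)--(iii) via the triangular-array criterion of \cite{Petrov} applied to the i.i.d.\ sum of rescaled trap sojourn times, (v) via the observation that the wrapped sum $(1-\beta^{-\gamma})\sum_k \beta^{\gamma k}\1{y> x\beta^k}$ equals $\beta^{\gamma\lfloor \log_\beta(y/x)\rfloor}\in[\beta^{-\gamma}(y/x)^\gamma,(y/x)^\gamma]$, and (viii) via the Gamma-mixture representation of $\mathcal{Z}_\infty$ conditioned on $(S_\infty,\mathrm{Bin}(W_\infty,p_\infty))$. Two minor divergences: for (iv) and (vi) the paper works directly with the explicit density formula $-\LR_1(x)=(1-\beta^{-\gamma})\,x\int_0^\infty g(u)\psi(xu)\,du$ and then invokes Tucker's soft criterion ($\int d\LR^{\mathrm{ac}}=\infty$, guaranteed by $\LR_1(x)\to-\infty$ as $x\to 0$) for absolute continuity of $\mu_\lambda$, whereas you propose characteristic-function decay; the latter is workable but needs a lower bound on the mass of $\LR_1$ near $0$ at scale $1/t$, which the multiplicative self-similarity supplies, so this is a legitimate if under-justified alternative. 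For (i)--(iii) you correctly identify the interchange of limit and sum over trap depths as the crux, but you do not supply the uniform stochastic domination that legitimizes it (the paper's Assumption (2) of Theorem \ref{sum_iid}, verified in Proposition \ref{cvg_distribution} by dominating $\overline{\mathcal{Z}}_n^k$ by $C\,{\bf e}\,T_{\text{sup}}$ with finite $(1-\epsilon)$-moments); naming the obstacle is not the same as overcoming it.

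The genuine gap is in (vii). You assert that constancy of $h(x)=x^\gamma\LR_1(x)$ would force $\overline{F}_\infty(y)=cy^{-\gamma}$, and derive your contradiction from that. This implication is false: $\LR_1$ is the $\beta$-wrapped transform $\sum_k\beta^{\gamma k}\overline{F}_\infty(x\beta^k)$, which is automatically multiplicatively periodic, and constancy of the wrap does not determine $\overline{F}_\infty$ pointwise --- many non-power-law tails (in particular ones with $\overline{F}_\infty(0^+)<1$) have constant wrap. Hence the contradiction with the atom of $\mathcal{Z}_\infty$ at $0$ does not follow, and your fallback remark that ``the oscillation of $h$ can be quantified'' for large $\beta$ is a gesture, not an argument. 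The paper's actual proof (Subsection \ref{non-conv}) is quantitative: it compares $\sum_k {\bf f}'(q)^{-k}\PR[\mathcal{Z}_\infty>\beta^{k-2/3}]$ with ${\bf f}'(q)^{-1/3}\sum_k {\bf f}'(q)^{-k}\PR[\mathcal{Z}_\infty>\beta^{k-1/3}]$ in the limit $\beta\to\infty$, showing both series converge to $\frac{1}{1-{\bf f}'(q)}\PR[\mathrm{Bin}(W_\infty,p_\infty)>0]$ (the $k\geq 1$ terms vanish by Markov and the uniform-in-$\beta$ bound on $\ES[S_\infty]$ from Proposition \ref{Sinfty}; the $k\leq 0$ terms each tend to $\PR[\mathrm{Bin}(W_\infty,p_\infty)>0]$ because $\mathcal{Z}_\infty\geq{\bf e}_1$ on $\{\mathcal{Z}_\infty>0\}$), and then uses Lemma \ref{minorWinfty} to bound $\PR[\mathrm{Bin}(W_\infty,p_\infty)>0]$ away from $0$ uniformly in $\beta$, so the factor ${\bf f}'(q)^{-1/3}\neq 1$ yields the contradiction. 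Some such uniform-in-$\beta$ estimate is indispensable and is missing from your proposal.
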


\begin{remark}
We believe that Theorem \ref{non-conve} holds true for all values $\beta > \beta_c$. The proof would amount to showing that the 
function $x^\gamma \LR_1(x)$, with $\LR_1(x)$ given in Theorem \ref{infdivprop}, is not a constant.
\end{remark}

Next we explain briefly, using a toy example, the reason for the non-convergence of $(\Delta_n/n^{1/\gamma})_{n\geq 0}$ and the convergence of subsequences in Theorems \ref{non-conve} and \ref{subsequ}.
The reasons lie in the classical theory of sums of i.i.d.~random variables. Consider a sequence of i.i.d.~random variables $G_i$,
geometrically distributed with parameter $a$.
Let
\[
S_n = \sum\limits_{i=1}^n \beta^{G_i}\, .
\]
It is easy to see, using classical results about triangular arrays of i.i.d.~random variables (c.f. \cite{Petrov}), that for $\alpha = \frac{- \log(1-a)}{\log \beta}$, and $n_\lambda(k) = \beta^{-\alpha k}$, the distributions of
\[
\frac{1}{n_\lambda(k)^{1/\alpha}}S_{n_\lambda(k)} \text{ converge to an infinitely divisible law}
\]
(see Theorem \ref{sum_iid} for a more general result). But obviously here $S_n/ n^{1/ \alpha}$ cannot converge in law, if $\alpha < 2$, because one easily checks that the distribution of $\beta^{G_1}$ does not belong to the domain of attraction of any stable law.
This is the basis of our belief that Theorem \ref{non-conve} should be valid for any $\beta > \beta_c$.

We now discuss the motivation for this work. If one considers a biased random walk on a supercritical percolation cluster on $\Z^d$, it is known that, at low bias, the random walk is ballistic (i.e. has a positive velocity) and has gaussian fluctuations, see ~\cite{Sznitman} and ~\cite{Berger}. It is also known that, at strong bias, the random walk is subballistic (i.e. the velocity vanishes). It should be noted that, in contrast to the Galton-Watson tree, the existence of a critical value separating the two regimes is not established for supercritical percolation clusters. The behaviour of the (law of) the random walk in the subballistic regime is a very interesting open problem. It was noted in \cite{AlainHouches} that the behaviour of the random walk in this regime is reminiscent of trap models introduced by Bouchaud (see \cite{Bouchaud} and \cite{BenarousCerny}). Our work indeed substantiates this analogy in the simpler case of supercritical random trees. We show that most of the time spent by the random walk before reaching level $n$ is spent in deep traps. These trapping times are roughly independent and are heavy-tailed. However, their distribution does not belong to the domain of attraction of a stable law, which explains the non-convergence result in Theorem \ref{non-conve}.

We note that it is possible to obtain convergence results to stable laws if one gets rid of the inherent lattice structure. One way to do this is to randomize the bias $\beta$. This is the approach of the forthcoming paper \cite{BenarousHammond}.

For other recent interesting works about random walks on trees, we refer to ~\cite{Hu}, ~\cite{aidekon}, and~\cite{PZ}. 

There is also an analogy with the one-dimensional random walk in an i.i.d.~random environment (RWRE). 
This model also shows a ballistic and a subballistic regime, explicitly known in terms of the parameters of the model. We refer to \cite{Zeitouni} for a survey. In the subballistic regime, it was shown in~\cite{KKS} that depending on a certain parameter $\kappa \in (0,1]$, and under a non-lattice assumption, $\frac{X_n}{n^{\kappa}}$ converges to a functional of a stable law, if $\kappa<1$, and $\frac{X_n}{n/ \ln n}$ converges to a functional of a stable law, if $\kappa=1$.
Recently, using a precise description of the environment, \cite{ESZ} and~\cite{ESZ2} refined this last theorem by describing all the parameters of the stable law, in the case $\kappa < 1$.

Our method has some similarity to the one used in~\cite{ESZ2}. In comparison to~\cite{ESZ2}, an additional difficulty arises from the fact the traps met depend not only on the environment but also on the walk. Moreover one has to take into account the number of times the walker enters a trap, which is a complicated matter because of the inhomogeneity of the tree. This major technical difficulty can be overcome by decomposing the tree and the walk into independent parts, which we do using a new variant of regeneration times.

The paper is organized as follows: In Section~\ref{sectionconstruction} and Section~\ref{trapcons} we explain how to decompose the tree and the walk. In Section~\ref{sketch} we give a sketch of the proof of Theorem \ref{subsequ}. Sections \ref{onlybig} - \ref{sectiontail} prepare the proof of Theorem \ref{subsequ} and explain why the hitting time of level $n$ is comparable to a sum of i.i.d.~random variables. Section \ref{sect_sum_iid} is self-contained and its main result, Theorem \ref{sum_iid}, is a classical limit theorem for sums of i.i.d.~random variables which is tailored for our situation. In Section \ref{lim-thm}, we finally give the proofs of the results. In Subsection \ref{subseq}, we apply Theorem \ref{sum_iid} to prove 
Theorem \ref{subsequ}. Subsection \ref{non-conv} is devoted to the proof of Theorem \ref{non-conve}, Subsection \ref{magnit} gives the proof of Theorem \ref{tightness} and Subsection \ref{limprop} the proof of Theorem \ref{infdivprop}. 

Let us give some conventions about notations. The parameters $\beta$ and $(p_k)_{k\geq 0}$ will remain fixed so we will usually not point out that constants depend on them. Most constants will be denoted $c$ or $C$ and their value may change from line to line to ease notations. Specific constants will have a subscript as for example $C_a$. We will always denote by $G(a)$ a geometric random variable of parameter $a$, with law given by $P[G_a\geq k]=(1-a)^{k-1}$ for $k \geq 1$.

\section{Constructing the environment and the walk in the appropriate way}{\label{sectionconstruction}}

In order to understand properly the way the walk is slowed down, we need to decompose the tree. Set 
\begin{equation}
\label{notationgh}
{\bf g}(s)=\displaystyle{\frac{{\bf f}((1-q)s+q )-q }{1-q}} \text{ and } {\bf h}(s)=\frac{{\bf f}(qs)}q.
\end{equation}

It is known (see~\cite{lycap}), that a ${\bf f}$-Galton-Watson tree (with $p_0>0$) can be generated by
\begin{itemize}
\item[(i)] growing a ${\bf g}$-Galton-Watson tree $T_{{\bf g}}$ called the backbone, where all vertices have an infinite line of descent,
\item[(ii)] attaching on each vertex $x$ of $T_{{\bf g}}$ a random number $N_x$ of ${\bf h}$-Galton-Watson trees, acting as traps in the environment $T$,
\end{itemize}
where $N_x$ has a distribution depending only on $\text{deg}_{T_{{\bf g}}}(x)$ and given $T_{{\bf g}}$ and $N_x$ the traps are i.i.d., see \cite{lycap} for details.

\begin{figure}[htpd]
\centering 
\epsfig{file=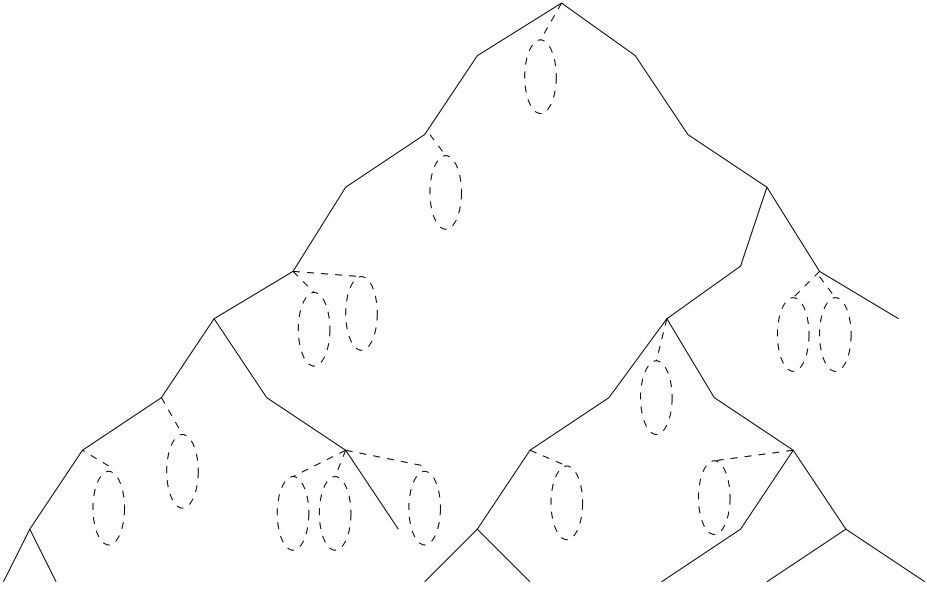, scale=0.7}
\caption{The Galton-Watson tree is decomposed into the backbone (solid lines) and the traps (dashed lines).}
\end{figure}

We will refer to any vertex at distance exactly one of the backbone as a bud. It is important to consider the backbone together with the buds to understand the number of visits to traps.

It will be convenient to
consider the attached Galton-Watson trees together with the edge which connects them to the backbone.
We define a \emph{trap} to be a graph $(x\cup V, [x,y] \cup E)$, where $x$ is a vertex of the backbone, $y$ is a bud adjacent to $x$ and $V$ (resp. $E$) are the vertices (resp. edges) of the descendants of $y$. The traps can themselves be decomposed in  a portion of $\Z$ called the spine, to which smaller trees called subtraps are added: this construction is presented in detail in Section~\ref{trapcons}.

Let us now construct the random walk. We need to consider the walk on the backbone and on the buds, to this end we introduce 
\begin{enumerate}
\item $\sigma_0=\sigma_0'=0$,
\item $\sigma_{n+1}= \inf \{i > \sigma_n | X_{i-1}, X_{i} \in \text{backbone} \}$,
\item $\sigma_{n+1}'=\inf \{i > \sigma_n'|X_{i-1}, X_i \in \text{backbone} \cup \text{buds} \}$,
\end{enumerate}
and we define $Y_n=X_{\sigma_n}$ the embedded walk on the backbone, respectively $Y_n'=X_{\sigma_n'}$ the embedded walk on the 
backbone and the buds.

Moreover define $\Delta_n^{Y}=\card \{i \geq 0: \sigma_i \leq \Delta_n\}$ the time spent on the backbone to reach level $n$ and similarly $\Delta_n^{Y'}=\card \{i \geq 0: \sigma_i' \leq \Delta_n\}$. 

Denote, for a set $A$ in the tree, $T_A^+=\min \{n\geq 1 | X_n \in A \}$,  $T_y^+:=T_{\{y\}}^+$, $T_A:=\min \{n\geq 0 | X_n \in A \}$, and $T_y:=T_{\{y\}}$. 

Note that the process $(Y_n)_{n\geq 0}$ is a Markov chain on the backbone, which is independent of the traps and the time spent in the traps. Here one has to be aware that visits to $root$ do not count as ``time spent in a trap'', precise definitions will follow below. Hence, in order to generate $Y_n$ we use a sequence of i.i.d.~random variables $U_i$ uniformly distributed on $[0,1]$. If $Y_j=w$ with $Z_1^*$ children on the backbone, then
\begin{enumerate}
\item $Y_{i+1}=\overleftarrow{w} $, if $U_i \in \Bigl[0,\frac 1 {Z_1^*\beta+1}\Bigr]$,
\item $Y_{i+1}=$ the $j^{th}$-child of $w$, if $U_i\in \Bigl[1- \frac {j\beta} {Z_1^* \beta+1}, 1- \frac {(j-1)\beta}{Z_1^* \beta +1}\Bigr].$
\end{enumerate}

For background on regeneration times we refer to~\cite{SZ} or~\cite{Zeitouni}. In the case of a $\beta$-biased random walk $\widetilde{Y}_n$ on $\Z$, a time $t$ is a regeneration time if
\[
\widetilde{Y}_t > \max_{s<t} \widetilde{Y}_s \text{ and } \widetilde{Y}_t< \min_{s>t} \widetilde{Y}_s.
\]

\begin{definition} A time $t$ is a super-regeneration time for $Y_n$, if $t$ is a regeneration time for the corresponding $\beta$-biased random walk $\widetilde{Y}_n$ on $\Z$ defined by 
\begin{itemize}
\item[(i)] $\widetilde{Y}_0=0,$
\item[(ii)] $\widetilde{Y}_{n+1}=\widetilde{Y}_n-1$, if $U_n \in \Bigl[0,\frac{1}{\beta+1}\Bigr]$,
\item[(iii)] $\widetilde{Y}_{n+1}=\widetilde{Y}_n+1$ otherwise.
\end{itemize}
\end{definition}

We denote $t-\text{SR}$ the event that $t$ is a super-regeneration time for $Y_n$.

It is obvious that a super-regeneration time for $Y_n$ is a regeneration time for $Y_n$ in the usual sense (the converse is false). 

The walk can then be decomposed between the successive super-regeneration times 
\begin{itemize}
\item[(i)] $\tau_0=0$,
\item[(ii)] $\tau_{i+1}= \inf\{j \geq \tau_i : j-\text{SR}\}$.
\end{itemize}

Since the regeneration times of a $\beta$-biased random walk on $\Z$ have some exponential moments, there exists $a>1$ such that $\ES[a^{\tau_2-\tau_1}]<\infty$ and $\ES[a^{\tau_1}]<\infty$.

\begin{remark}{\label{avantregen}}
The advantage of super-regeneration times compared to classical regeneration times is that the presence of a super-regeneration time does not 
depend on the environment, but only the on the sequence $(U_i)_{i\geq 0}$.
\end{remark}

\begin{remark}{\label{probregen}}
The drawback of super-regeneration times is that the event that $k$ is a super-regeneration time depends on the random variables $(U_i)_{i\geq 0}$ and not only on the trajectory of the random walk $(Y_n)_{n \geq 0}$.
\end{remark}

Denoting for $k\geq 1$, the $\sigma$-field
\[
\mathcal{G}_k = \sigma(\tau_1,\ldots,\tau_k,(Y_{n\wedge \tau_k})_{n\geq 0},\ \{x\in T(\omega), x \text{ is not a descendant of } Y_{\tau_k}\}).
\]

We have the following proposition
\begin{proposition}{\label{regenstruc1}}
For $k\geq 1$,
\begin{align*}
& \PR[(Y_{\tau_k+n}-Y_{\tau_k})_{n\geq 0} \in \cdot,\ \{x \in T(\omega), x \text{ is a descendant of } Y_{\tau_k}\}\in \cdot|\mathcal{G}_k]\\ = & \PR[(Y_n)_{n \geq 0} \in \cdot, T(\omega) \in \cdot |0-\text{SR}]. 
\end{align*}
\end{proposition}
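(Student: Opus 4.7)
I would prove this by reducing to the case $k=1$ (the general case follows by iterating at $\tau_1,\tau_2,\ldots$) and establishing three facts. First, a pathwise containment: on $\{\tau_1<\infty\}$, every $Y_n$ with $n\ge\tau_1$ is either $Y_{\tau_1}$ or one of its descendants. Because $Y$ moves to the parent exactly when $U_i\in[0,1/(Z_1^*\beta+1)]\subset[0,1/(\beta+1)]$, every parent-step of $Y$ is paired with a downward step of $\widetilde Y$. Counting up- and down-steps between $\tau_1$ and any $s>\tau_1$ thus yields $\widetilde Y_s-\widetilde Y_{\tau_1}\le|Y_s|-|Y_{\tau_1}|$. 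If $Y_s$ were the parent of $Y_{\tau_1}$ (the only way for $Y$ to exit the subtree at $Y_{\tau_1}$ for the first time), this would give $\widetilde Y_s\le\widetilde Y_{\tau_1}-1$, contradicting the super-regeneration property.

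Second, the branching property of the ${\bf f}$-Galton--Watson tree, combined with the decomposition recalled in Section~\ref{sectionconstruction}, shows that the subtree $T'$ of descendants of the backbone vertex $Y_{\tau_1}$ has the law of a fresh ${\bf f}$-Galton--Watson tree conditioned on non-extinction, independent of $\mathcal{G}_1$ and of $(Y_n)_{n\le\tau_1}$. Third, and most subtly, the future driving sequence $(U_{\tau_1+i})_{i\ge 0}$ should have, conditionally on $\mathcal{G}_1$, the law of an i.i.d.\ uniform sequence conditioned on $0$-$\text{SR}$. To see this, decompose $\{\tau_1=m\}=A_m^{\text{past}}\cap A_m^{\text{fut}}$, where $A_m^{\text{fut}}=\{\widetilde Y_n>\widetilde Y_m\ \text{for all}\ n>m\}$ depends only on $(U_i)_{i\ge m}$, while $A_m^{\text{past}}$ depends only on $(U_i)_{i<m}$ and records both $\widetilde Y_n<\widetilde Y_m$ for $n<m$ and the past part of the statement "no $k<m$ is a super-regeneration". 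The decomposition is valid because, once $A_m^{\text{fut}}$ holds, the forward constraint in the definition of super-regeneration at any $k<m$, namely $\widetilde Y_n>\widetilde Y_k$ for $n>m$, follows automatically from $\widetilde Y_n>\widetilde Y_m>\widetilde Y_k$. Since $(U_i)_{i\ge 0}$ is i.i.d., this factorization gives the claim: after translating the walk by $-\widetilde Y_{\tau_1}$, the event $A_{\tau_1}^{\text{fut}}$ is exactly $0$-$\text{SR}$ for the walk built from $(U_{\tau_1+i})_{i\ge 0}$.

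Putting the three facts together, the future walk $(Y_{\tau_1+n}-Y_{\tau_1})_{n\ge 0}$ is, by Step 1, a deterministic functional of the subtree $T'$ and the sequence $(U_{\tau_1+i})_{i\ge 0}$; by Steps 2 and 3 these two ingredients are, conditionally on $\mathcal{G}_1$, independent and distributed respectively as a fresh ${\bf f}$-GW tree conditioned on non-extinction and as an i.i.d.\ uniform sequence conditioned on $0$-$\text{SR}$. These are exactly the ingredients generating $(Y_n,T(\omega))$ under $\PR(\cdot\mid 0\text{-SR})$. The main obstacle is precisely the one flagged in Remark~\ref{probregen}: super-regeneration times are not stopping times for $(U_i)$, so one cannot invoke the strong Markov property directly. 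It is overcome by the clean past/future decomposition of the third step, which is available because super-regeneration is defined through the simple walk $\widetilde Y$ on $\Z$ rather than through the walk $Y$ on $T(\omega)$.
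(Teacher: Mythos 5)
The paper offers no proof of this proposition (it is explicitly skipped as ``standard''), so there is nothing to compare line by line; your argument is the standard regeneration-structure argument, correctly adapted to the super-regeneration times, and the key difficulty --- that $\tau_1$ is not a stopping time --- is handled exactly as it should be, via the factorization $\{\tau_1=m\}=A_m^{\text{past}}\cap A_m^{\text{fut}}$ with $A_m^{\text{fut}}\in\sigma(U_i,\,i\geq m)$. Your step-counting inequality $\widetilde Y_s-\widetilde Y_{\tau_1}\leq |Y_s|-|Y_{\tau_1}|$ (valid because $1/(Z_1^*\beta+1)\leq 1/(\beta+1)$, so every parent-step of $Y$ is a down-step of $\widetilde Y$) is the right tool for the pathwise containment. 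One small supplement is needed to make Step 2 airtight: you must also check that the path \emph{before} $\tau_1$ never visits $Y_{\tau_1}$ or any of its descendants; otherwise $(Y_{n\wedge\tau_1})_{n\geq 0}$, and hence $\mathcal{G}_1$, would depend on backbone degrees inside the subtree of descendants of $Y_{\tau_1}$ (through the thresholds $1/(Z_1^*\beta+1)$ used to generate the steps), and the claimed conditional independence would not be immediate. This follows from the same counting: if $Y_s$ were a strict descendant of $w=Y_{\tau_1}$ with $s<\tau_1$, then over $[s,\tau_1]$ the walk $Y$ has more parent-steps than child-steps, forcing $\widetilde Y_{\tau_1}\leq\widetilde Y_s$, contradicting $\tau_1$-SR; the same argument rules out an earlier visit to $w$ itself. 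With that one-line addition, and the routine remark that $\{\tau_k=m\}$ factorizes in the same past/future way for every $k$, your proof is complete and correct.
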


\begin{remark}{\label{regenstruc2}} The conditioning $0-\text{SR}$ refers only to the walk on the backbone, hence it is obvious that the behaviour of the walk in the traps and the number of times the walker enters a trap is independent of that event. \end{remark}

We skip the proof of this Proposition since it is standard. A consequence of the proposition is that the environment and the walk can be subdivided into super-regeneration blocks which are i.i.d.~(except for the first one). As a consequence we have that
\begin{equation}
\label{notationrho}
\rho_n:=\frac{ \card\{ Y_1, \ldots , Y_{\Delta_n^{Y}} \}} n \text{ satisfies } \rho_n\to \rho:=\frac{\ES[\card\{Y_{\tau_1},\ldots, Y_{\tau_2 -1}\}]}{\ES[\tau_2 - \tau_1] }, \quad \PR-a.s
\end{equation}
which is the average number of vertices per level visited by $Y_n$. This quantity is finite since it is bounded above by than $1/v(\beta)$, where $v(\beta)$ is the speed of $|Y_n|$ which is strictly positive by a comparison to the $\beta$-biased random walk on $\Z$. 

When applying the previous proposition, it will be convenient to use the time-shift for the random walk, which we will denote by $\theta$.

\section{Constructing a trap}{\label{trapcons}}

In the decomposition theorem for Galton-Watson trees, we attach to the vertices of the backbone a (random) number of ${\bf h}$-Galton-Watson trees. We will denote their distribution with ${\bf Q}$, hence ${\bf Q}[Z=k]=q_k:=p_kq^{k-1}$, where $Z$ denotes the number of children of a given vertex. As stated before the object we will denote a trap has an additional edge: to describe a trap $\ell$ we take a vertex called $root$ (or $root(\ell)$ to emphasize the trap), link it to another vertex (denoted $\overrightarrow {root}(\ell)$), which is the actual root of a random ${\bf h}$-Galton-Watson tree. 

When we use random variables associated to a trap, we refer to the random part of that trap (the ${\bf h}$-Galton-Watson tree). For example the notation $Z_n$ is the number of children at the generation $n$ with $\overrightarrow{root}$ being generation $0$.  In particular, we introduce the height of a trap
\begin{equation}
\label{notationH}
H =\max \{n \geq 0, Z_n >0 \},
\end{equation}
and we say a trap has height $k$ if $H(\ell)=k$, i.e.~the distance between $\overrightarrow{root}$ and the bottom point of the trap is $k$.

This way of denoting the random variables has the advantage that $Z_n$ (resp. $H$) are distributed under ${\bf Q}$, as the number of children at generation $n$ (resp. the height) of an ${\bf h}$-Galton-Watson tree.

The biggest traps seen up to level $n$ are of size $-\ln n / \ln {\bf f}'(q)$, therefore a trap will be considered big if its height is greater or equal to 
\begin{equation}
\label{notationhn}
h_n= \Bigl\lceil(1-\epsilon) \frac{\ln n}{-\ln {\bf f}'(q)}\Bigr\rceil,
\end{equation}
for some $\epsilon>0$ which will eventually be chosen small enough. Such a trap will be called an $h_n$-trap or a big trap. It is in those traps that the walker will spend the majority of his time and therefore it is important to have a good description of them.

The traps are (apart from the additional edge) subcritical Galton-Watson trees, as such, they can be grown from the bottom following a procedure described in~\cite{Geiger}, that we recall for completeness. We will denote by $\delta$ the starting point of the procedure, corresponding to the leftmost bottom point of the trap, this last notation will be kept for the whole paper.

With a slight abuse of notation, we will denote by ${\bf Q}$ a probability measure on an enlarged probability space containing the following additional information.

We denote by $(\phi_{n+1},\psi_{n+1})$ with $n\geq 0$, a sequence of i.i.d pairs of random variables with joint law given by
\begin{equation}
\label{notationPSI}
{\bf Q}[\phi_{n+1}=j,\psi_{n+1}=k]=c_n q_k {\bf Q}[Z_n=0]^{j-1} {\bf Q}[Z_{n+1}=0]^{k-j},\ 1\leq j \leq k, \ k \geq 1,
\end{equation}
where $c_n =\frac{{\bf Q}[H=n]}{{\bf Q}[H=n+1]}$.

Set $\mathcal{T}_0=\{\delta\}$. Construct $\mathcal{T}_{n+1}$, $n\geq 0$ inductively as follows: 
\begin{enumerate}
\item let the first generation size of $\mathcal{T}_{n+1}$ be $\psi_{n+1}$,
\item let $\mathcal{T}_n$ be the subtree founded by the $\phi_{n+1}$-th first generation vertex of $\mathcal{T}_{n+1}$,
\item attach independent ${\bf h}$-Galton-Watson trees which are conditioned on having height stricly less than $n$ to the $\phi_{n+1}-1$ siblings to the left of the distinguished first generation vertex,
\item[(iv)] attach independent ${\bf h}$-Galton-Watson trees which are conditioned on having height strictly less than $n+1$ to the $\psi_{n+1}-\phi_{n+1}$ siblings to the right of the distinguished first generation vertex.
\end{enumerate}

Then $\mathcal{T}_{n+1}$ has the law of an ${\bf h}$-Galton-Watson tree conditioned to have height $n+1$ (see~\cite{Geiger}).

We denote $\mathcal{T}$ the infinite tree asymptotically obtained by this procedure; from this tree we can obviously recover all $\mathcal{T}_n$. If we pick independently the height $H$ of an ${\bf h}$-Galton-Watson tree and the infinite tree $\mathcal{T}$ obtained by the previous algorithm, then $\mathcal{T}_H$ has the same law as an ${\bf h}$-Galton-Watson tree.

We will call spine of this Galton-Watson tree the ancestors of $\delta$. If $y\neq \delta$ is in the spine, $\overrightarrow{y}$ denotes its only child in the spine. We define a subtrap to be a graph $(x\cup V, [x,y] \cup E)$, where $x$ is a vertex of the spine, $y$ is a descendant of $x$ not on the spine and $V$ (resp. $E$) are the vertices (resp. edges) of the descendants of $y$. The vertex $x$ is called the root of the subtrap, and we denote 
\begin{equation}
\label{Sdef}
S_x \text{  the set of all subtraps rooted at } x\, .
\end{equation}
\begin{figure}[htpd]
\centering 
\epsfig{file=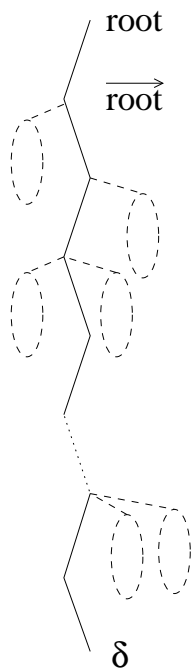, scale=0.7}
\caption{The trap is decomposed into the spine (solide lines) and the subtraps (dashed lines).}
\end{figure}

We denote by $S_{n+1}^{i,j,k}$ and $\Pi_{n+1}^{i,j,k}$ with $n,i,j \geq 0$ and $k=1,2$, two sequences of independent random variables, which are independent of $(\phi_n,\psi_n)_{n\geq 0}$ and given by
\begin{enumerate}
\item $S_{n}^{n+1,j,1}$ (resp. $S_{n}^{n,j,2}$) is the $j$-th subtrap conditioned to have height less than $n$ added on the left (resp. right) of the $n+1$-th (resp. $n$-th) ancestor of $\delta$,
\item $\Pi_{n}^{i,j,k}$ is the weight of $S_{n}^{i,j,k}$ under the invariant measure associated to the conductances $\beta^{i+1}$ between the level $i$ and $i+1$, the root of $S_{n}^{i,j,k}$ being counted as level 0.
\end{enumerate}
These random variables describe the subtraps and their weights.

We denote $\Pi_{-1}^{i,j,k}=0$ and
\begin{equation}
\label{notationLi}\Lambda_i(\omega)=\sum_{j=1}^{\phi_{i}-1} \Pi_{i-1}^{i,j,1}+\sum_{j=1}^{\psi_{i}-\phi_{i}} \Pi_i^{i,j,2},
\end{equation}
which is the weight of the subtraps added to the $i-$th ancestor of $\delta$.

Due to the next lemma, the random variables $\Lambda_i$ will be important to describe the time spent in traps.
We recall that a reversible Markov chain can be described as an electrical network. An electrical network is a connected graph $G= (V, E)$ with positive conductances $c(e)_{e \in E}$ on its edges. It defines a Markov chain with transition probabilities
$p(x,y) = c([x,y])/ \sum_{z: [x,z]\in E}c([x,z])$, where we denote $[x,y]$ the edge connecting $x$ and $y$ (and $p(x,y)=0$ if there is no such edge). This Markov chain has $\widehat{\pi}(x)=\sum_{[x,y] \in E} c([x,y])$, $x\in V$, as an invariant measure. If it is positive recurrent, the unique invariant probability measure is given by $\widehat{\pi}(x)/\widehat{\pi}(V)$, where 
$\widehat{\pi}(V) = \sum_{y\in V}\widehat{\pi}(y)$. We refer to \cite{LP} for background on electrical networks, and we recall the following lemma.
\begin{lemma}
\label{meanreturntime}
Let $(G,c(e)_{e \in E})$ be a positive recurrent electrical network, $x \in V$ and $P_x$ the law of the Markov chain started at $x$. 
If $\sum_{z:[x,z] \in E} c([x,z])=1$, then
\[
E_x [T_x^+]= 2 \sum_{e \in E} c(e).
\]
\end{lemma}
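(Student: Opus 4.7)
The plan is to deduce the formula from Kac's lemma for positive recurrent Markov chains, combined with the standard observation that the natural reversible measure double-counts edges.

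First, I would recall that for a reversible Markov chain associated to the network, the measure $\widehat{\pi}(y) = \sum_{z:[y,z]\in E} c([y,z])$ is invariant, which is immediate from detailed balance $c([y,z]) = c([z,y])$. Since the chain is assumed positive recurrent, $\widehat{\pi}(V) = \sum_{y \in V} \widehat{\pi}(y) < \infty$, so the unique stationary probability measure is $\pi(y) = \widehat{\pi}(y)/\widehat{\pi}(V)$.

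Next, by Kac's lemma (standard renewal theorem for positive recurrent Markov chains, see for instance \cite{LP}), the expected return time to the starting vertex satisfies
\[
E_x[T_x^+] = \frac{1}{\pi(x)} = \frac{\widehat{\pi}(V)}{\widehat{\pi}(x)}.
\]
The hypothesis $\sum_{z:[x,z]\in E} c([x,z]) = 1$ is precisely the statement $\widehat{\pi}(x) = 1$, so it only remains to evaluate $\widehat{\pi}(V)$.

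Finally, I would interchange the order of summation:
\[
\widehat{\pi}(V) = \sum_{y \in V} \sum_{z:[y,z]\in E} c([y,z]) = \sum_{e \in E} \bigl( c(e) + c(e) \bigr) = 2\sum_{e \in E} c(e),
\]
where each undirected edge $e = [y,z]$ contributes $c(e)$ to both $\widehat{\pi}(y)$ and $\widehat{\pi}(z)$. Substituting this into the expression above yields $E_x[T_x^+] = 2\sum_{e \in E} c(e)$, as desired. There is no real obstacle here: the result is entirely classical, and the only subtlety worth mentioning is the double-counting of edges when one sums the vertex-weights $\widehat{\pi}(y)$ over the whole vertex set.
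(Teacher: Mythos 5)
Your proof is correct and follows essentially the same route as the paper: the paper obtains $E_x[T_x^+]=\widehat{\pi}(V)/\widehat{\pi}(x)$ by summing the expected number of visits to each $y$ before returning to $x$ (which is exactly the standard proof of Kac's lemma that you invoke), and then concludes with the same edge double-counting identity $\widehat{\pi}(V)=2\sum_{e\in E}c(e)$. No gaps.
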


\begin{proof}
The expected number of visits to $y$ before returning to $x$ is 
$\widehat \pi(y)/ \widehat \pi(x)$. Hence,
\[
E_{x}[T_{x}^+]=\frac{\widehat{\pi}(V)}{\widehat{\pi}(x)}=\widehat{\pi}(V),
\]
since $\widehat{\pi}(x)=\sum_{z: [x,z] \in E} c([x,z])=1$. 
Then we simply notice that $\widehat{\pi}(V)=2 \sum_{e \in E} c(e)$.
\end{proof}

Let us introduce another important random variable
\begin{equation}
\label{notationSinfty}S_{\infty}=2 \sum_{i=0}^{\infty} \beta^{-i} (1+\Lambda_i),
\end{equation}
which appears in the statement of Theorem \ref{infdivprop}. It is the mean return time to $\delta$ of the walk on the infinite tree $\mathcal{T}$ described in the algorithm following~(\ref{notationPSI}).

\begin{lemma}
\label{W}
There exists a constant $C_{\psi}$ depending on $(p_k)_{k\geq 0}$, such that for $n\geq 0$ and $k\geq 0$, 
\[
{\bf Q}[\psi_{n+1}=k]\leq C_{\psi} k q^k.
\]

In particular, for another constant ${\widetilde C}_\psi$,  
$\displaystyle{\sup_{i\in \N}} E_{\bf Q}[\psi_i] \leq {\widetilde C}_\psi < \infty$.
\end{lemma}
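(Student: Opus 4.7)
The plan is a direct calculation from the explicit joint law in (\ref{notationPSI}). Write $a_m := {\bf Q}[Z_m = 0]$, so that $c_n = (a_{n+1}-a_n)/(a_{n+2}-a_{n+1})$. Summing out $\phi_{n+1}$ in (\ref{notationPSI}) produces a geometric sum:
\[
{\bf Q}[\psi_{n+1}=k] \;=\; c_n\,q_k \sum_{j=1}^{k} a_n^{j-1}\,a_{n+1}^{k-j}.
\]
Since $0 \le a_n \le a_{n+1} \le 1$, each of the $k$ summands is at most $1$, so this sum is bounded by $k$. The prefactor satisfies $q_k = p_k q^{k-1} \le q^{-1}\,q^k$. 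The only remaining step is therefore to bound $c_n$ uniformly in $n$.

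That uniform bound is the main technical point of the argument, and my plan uses convexity of the ${\bf h}$-generating function. Since $h(s) := {\bf f}(qs)/q$ is strictly subcritical ($h'(1) = {\bf f}'(q) < 1$), one has $a_{n+1} = h(a_n)$ with $a_0 = 0$, and $h'$ is non-decreasing on $[0,1]$. The mean value theorem then gives
\[
a_{n+2} - a_{n+1} \;=\; h(a_{n+1}) - h(a_n) \;\ge\; h'(a_n)\,(a_{n+1}-a_n),
\]
so $c_n \le 1/h'(a_n)$. Because $(a_n)$ is non-decreasing and $a_1 = h(0) = p_0/q > 0$, while $h'(s) > 0$ for every $s > 0$ (the ${\bf h}$-distribution has positive mean ${\bf f}'(q)$, so $q_k > 0$ for some $k \ge 1$), I would conclude $h'(a_n) \ge h'(a_1) > 0$ uniformly in $n \ge 1$; the single remaining case $n=0$ gives one finite explicit constant. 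This yields $\sup_n c_n < \infty$, and combining with the previous bounds delivers
\[
{\bf Q}[\psi_{n+1}=k] \;\le\; C_\psi\, k q^k, \qquad C_\psi := q^{-1}\,\sup_n c_n.
\]

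The assertion on the uniform first moment is then a one-line check: for every $i$,
\[
E_{\bf Q}[\psi_i] \;=\; \sum_{k\ge 1} k\,{\bf Q}[\psi_i=k] \;\le\; C_\psi \sum_{k\ge 1} k^2 q^k,
\]
which converges because $q \in (0,1)$, giving the desired constant $\widetilde C_\psi$ independent of $i$. The main (indeed only) obstacle is verifying $\sup_n c_n < \infty$; everything else is routine bookkeeping, and the convexity-plus-MVT step above is the cleanest way I see to get the ratio bound without invoking the full Yaglom-type asymptotics ${\bf Q}[H \ge n] \sim c\,({\bf f}'(q))^n$.
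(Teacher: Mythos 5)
Your proof is correct, and it follows the paper's reduction exactly up to the one genuinely nontrivial point: after summing out $\phi_{n+1}$, bounding each term of the geometric sum by $1$ (so the sum is at most $k$), and using $q_k=p_kq^{k-1}\le q^{k-1}$, everything rests on $\sup_n c_n<\infty$, and it is only there that you diverge from the paper. The paper's argument for that bound is a one-line tree surgery: a tree of height $n+1$ can be obtained by giving the root $j_0$ children (any fixed $j_0\ge 1$ with $q_{j_0}>0$), grafting a tree of height exactly $n$ onto one of them and making the other $j_0-1$ leaves, whence $1/c_n={\bf Q}[H=n+1]/{\bf Q}[H=n]\ge q_{j_0}q_0^{j_0-1}$ uniformly in $n$, with no calculus at all. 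Your argument instead exploits the iteration $a_{n+1}={\bf h}(a_n)$ together with the convexity of the generating function ${\bf h}$ to get $c_n\le 1/{\bf h}'(a_n)\le 1/{\bf h}'(a_1)$ for $n\ge 1$, using $a_1=p_0/q>0$ and ${\bf h}'(s)>0$ for $s>0$ (which holds since ${\bf h}'(1)={\bf f}'(q)>0$), plus the single finite case $n=0$ (where ${\bf Q}[H=1]>0$ because $q_0>0$ and some $q_j>0$ with $j\ge 1$). Both routes are elementary and complete; the paper's is shorter and purely combinatorial, while yours has the mild advantage of requiring no choice of $j_0$ and of tracking the correct asymptotics (since $a_n\uparrow 1$, your bound gives $c_n\to 1/{\bf f}'(q)$, consistent with Lemma \ref{tailH}) — and, as you anticipated, neither needs the Yaglom-type tail asymptotics. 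The closing moment estimate $E_{\bf Q}[\psi_i]\le C_\psi\sum_k k^2q^k<\infty$ is exactly the paper's unstated one-liner.
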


\begin{proof}
Recalling~(\ref{notationPSI}), we get
\begin{align*}
{\bf Q}[\psi_{n+1}=k] &=\sum_{j=1}^k {\bf Q}[\phi_{n+1}=j,\psi_{n+1}=k] \\
              &=c_n q_k   \sum_{j=1}^k {\bf Q}[Z_n=0]^{j-1} {\bf Q }[Z_{n+1}=0]^{k-j} \\
              &\leq c_n k q_k.
\end{align*}

It is enough to show that the sequence $(c_n)_{n \geq 0}$ is bounded from above. A Galton-Watson tree of height $n+1$ can be obtained as $root$ having $j$ children, one of which produces a Galton-Watson tree of height $n$, the others having no children of their own. Thus
\[
1/c_n  = {\bf Q}[H=n+1]/{\bf Q}[H= n] \geq q_j q_0^{j-1},
\]
for any $j\geq 1$. We fix $j_0\geq 1$ so that $q_{j_0}>0$ and we get 
\[
{\bf Q}[\psi_{n+1}=k] \leq \frac 1 {q_{j_0} q_0^{j_0-1}} k q^{k-1},
\]
where we used $q_k =p_k q^{k-1} \leq q^{k-1}$.
\end{proof}

Using this lemma we can get a tail estimate for the height of traps. 

\begin{lemma}
\label{tailH}
There exists $\alpha>0$ such that
\[
{\bf Q}[H\geq n] \sim \alpha {\bf f}'(q)^n.
\]
\end{lemma}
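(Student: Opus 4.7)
The plan is to reduce the tail of $H$ to the analysis of the iterates of ${\bf h}$ near its attracting fixed point $1$. By the standard extinction-probability formula for Galton-Watson processes, $\mathbf{Q}[Z_n = 0] = {\bf h}^{(n)}(0)$, the $n$-fold iterate of ${\bf h}$ at $0$. Setting $a_n := {\bf Q}[H \geq n] = {\bf Q}[Z_n > 0]$, we therefore have $a_n = 1 - {\bf h}^{(n)}(0)$ for $n \geq 1$, and the recursion
\[
a_{n+1} = \varphi(a_n), \qquad \varphi(x) := 1 - {\bf h}(1-x),
\]
with $a_1 = 1 - p_0/q \in (0,1)$, $\varphi(0) = 0$, and $\varphi'(0) = {\bf h}'(1) = {\bf f}'(q) =: m \in (0,1)$. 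The goal is to prove that $b_n := a_n / m^n$ converges to a strictly positive limit $\alpha$.

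For the upper bound, I would use that ${\bf h}'(s) = \sum_{k \geq 1} k q_k s^{k-1}$ has non-negative coefficients and is hence non-decreasing on $[0,1]$, so $\varphi'(x) = {\bf h}'(1-x) \leq {\bf h}'(1) = m$ for all $x \in [0,1]$. This yields $\varphi(x) \leq m x$, hence $a_n \leq a_1 m^{n-1}$ (geometric decay) and $b_{n+1} \leq b_n$, so the sequence $(b_n)$ is non-increasing.

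For the lower bound, the assumption ${\bf E}[Z^2] < \infty$ gives ${\bf h}''(1) = q {\bf f}''(q) < \infty$, so the second-order Taylor expansion
\[
\varphi(x) = m x - \tfrac{1}{2}{\bf h}''(1)\, x^2 + O(x^3)
\]
holds near $0$, and extends to a bound $\varphi(x) \geq m x(1 - C x)$ valid for all $x \in [0, a_1]$ with some constant $C > 0$. Applied along the orbit, this yields $b_{n+1} \geq b_n (1 - C a_n)$; since $\sum_n a_n < \infty$ by the geometric decay, the infinite product $\prod_{n \geq 1}(1 - C a_n)$ converges to a strictly positive number. Therefore $(b_n)$ is non-increasing and bounded below by a positive constant, hence converges to some $\alpha > 0$. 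This gives ${\bf Q}[H \geq n] \sim \alpha m^n = \alpha {\bf f}'(q)^n$, as claimed.

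The main obstacle is really only the double-sided Taylor control on $\varphi$ near $0$, which requires the finite second moment hypothesis; once this is in place, the monotonicity of $(b_n)$ together with the summability of $(a_n)$ makes both the convergence of $b_n$ and the strict positivity of the limit routine.
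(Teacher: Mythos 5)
Your argument is correct, but it is a genuinely different route from the paper's. The paper simply invokes the classical Kolmogorov--Heathcote--Seneta--Vere-Jones theorem for subcritical Galton--Watson processes: $\lim_n \widetilde Q[Z_n>0]/m^n>0$ if and only if $E_{\widetilde Q}[Z_1\log^+ Z_1]<\infty$, and checks the $Z\log Z$ condition for ${\bf Q}$ from the geometric bound $q_k=p_kq^{k-1}\le q^{k-1}$. You instead reprove the needed asymptotics from scratch by iterating $\varphi(x)=1-{\bf h}(1-x)$: the monotonicity of ${\bf h}'$ gives $\varphi(x)\le mx$ and hence that $b_n=a_n/m^n$ is non-increasing, while the two-sided Taylor bound $\varphi(x)\ge mx(1-Cx)$ with $C={\bf h}''(1)/(2m)$ combined with the summability of $(a_n)$ gives a convergent infinite product bounding $b_n$ from below. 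This buys a self-contained, elementary proof at the cost of a (formally) stronger hypothesis; the paper's citation is shorter and uses the sharp condition. Two small remarks: (a) the finiteness of ${\bf h}''(1)=q\,{\bf f}''(q)$ does not actually require ${\bf E}[Z^2]<\infty$ --- since $q<1$ and $q_k\le q^{k-1}$, the generating function ${\bf h}$ has radius of convergence $1/q>1$ and all its derivatives are finite at $1$, so your proof works under the paper's standing assumptions with room to spare; (b) the product $\prod_{n\ge 1}(1-Ca_n)$ may have nonpositive factors for small $n$ if $Ca_1\ge 1$, so you should start the product at an index $n_0$ with $Ca_{n_0}<1$ and use $b_{n_0}>0$, which is immediate since $a_{n_0}={\bf Q}[Z_{n_0}>0]>0$. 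Neither point affects the validity of the argument.
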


\begin{proof}
It is classical (see~\cite{Heathcote}) that for any Galton-Watson tree of law $\widetilde Q$ with $E_{\widetilde Q}[Z_1]=m<1$ expected number of children, we have 
\[
\lim_{n \to \infty} \frac{\widetilde Q[Z_n>0]}{m^n}>0 \Longleftrightarrow E_{{\widetilde Q}}[Z_1 \log^+Z_1]<\infty.
\]

The integrability condition is satisfied for ${\bf Q}$ since $q_k=p_kq^{k-1}\leq q^{k-1}$, and the result follows.
\end{proof}

We also recall the following classical upper bound 
\begin{equation}
\label{tailmaj}{\bf Q}[H\geq n]={\bf Q}[Z_n>0] = {\bf Q}[Z_n \geq 1 ]\leq  E_{\bf Q}[Z_n]={\bf f}'(q)^n.
\end{equation}

The following lemma seems obvious, but not standard, so we include its proof for the convenience of the reader.

\begin{lemma}{\label{Z}}
We have for $k \geq 0$,
\[
{\bf Q}[Z_1 \leq  k|Z_n=0] \geq  {\bf Q}[Z_1 \leq k].
\]

In particular $E_{\bf Q}[Z_i |Z_n=0] \leq {\bf f}'(q)^i,$ for any $i\geq 0$ and $n\geq 0$.
\end{lemma}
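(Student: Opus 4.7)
The plan is to use Bayes' rule, identify the conditional law of $Z_1$ as an exponentially tilted version of the unconditional law, and exploit the standard fact that a likelihood-ratio tilting by a decreasing function produces stochastic dominance.

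First I would write, by Bayes and the branching property,
\[
{\bf Q}[Z_1 = j \mid Z_n = 0]
= \frac{q_j\, {\bf Q}[Z_n = 0 \mid Z_1 = j]}{{\bf Q}[Z_n = 0]}
= \frac{q_j\, r_{n-1}^{\, j}}{{\bf Q}[Z_n = 0]},
\]
where $r_{n-1} := {\bf Q}[Z_{n-1} = 0] \in [0,1]$, because conditionally on $Z_1 = j$ the event $\{Z_n = 0\}$ is the intersection of $j$ independent events that each first-generation subtree dies by generation $n-1$. Thus the tilted law $\tilde q_j := {\bf Q}[Z_1 = j \mid Z_n = 0]$ satisfies $\tilde q_j / q_j = r_{n-1}^{\, j}/{\bf Q}[Z_n=0]$, which is nonincreasing in $j$ since $r_{n-1} \leq 1$.

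Next I would invoke the elementary lemma: if $\tilde q_j/q_j$ is nonincreasing in $j$ and both are probability distributions on $\N$, then $\sum_{j\leq k}\tilde q_j \geq \sum_{j\leq k} q_j$ for every $k$. The argument is a one-line sign analysis: since the two distributions have equal total mass, the difference $\tilde q_j - q_j = q_j(r_{n-1}^{\, j}/{\bf Q}[Z_n=0] - 1)$ changes sign at most once (from $+$ to $-$), so its cumulative sums are nonnegative. This gives the first claim ${\bf Q}[Z_1 \leq k \mid Z_n = 0] \geq {\bf Q}[Z_1 \leq k]$, and in particular $E_{\bf Q}[Z_1 \mid Z_n = 0] \leq E_{\bf Q}[Z_1] = {\bf f}'(q)$.

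The bound on $E_{\bf Q}[Z_i \mid Z_n = 0]$ I would prove by induction on $i$, uniformly in $n \geq i$. Conditioning on $Z_1$ and using that, given $Z_1 = j$ and $Z_n = 0$, the $j$ subtrees rooted at the first generation are i.i.d.\ ${\bf h}$-Galton-Watson trees each conditioned on $\{Z_{n-1} = 0\}$, we obtain
\[
E_{\bf Q}[Z_i \mid Z_n = 0]
= E_{\bf Q}[Z_1 \mid Z_n = 0]\; E_{\bf Q}[Z_{i-1} \mid Z_{n-1} = 0].
\]
The base case $i=1$ has already been handled. Assuming the bound at level $i-1$ for every admissible $n$, the identity above gives $E_{\bf Q}[Z_i \mid Z_n = 0] \leq {\bf f}'(q) \cdot {\bf f}'(q)^{i-1} = {\bf f}'(q)^i$.

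I do not anticipate a substantive obstacle: the only subtle step is the likelihood-ratio/stochastic-dominance argument, and that is completely standard. Minor care is needed to handle the degenerate case $r_{n-1} = 0$ (only possible when $n = 1$, where $\{Z_1 = 0\} \subset \{Z_1 \leq k\}$ trivially) and to guarantee ${\bf Q}[Z_n = 0] > 0$, which holds since ${\bf Q}$ is subcritical with $q_0 = {\bf h}(0) > 0$.
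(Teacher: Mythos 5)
Your proof is correct and rests on the same core observation as the paper's: ${\bf Q}[Z_n=0\mid Z_1=j]=r_{n-1}^{\,j}$ is nonincreasing in $j$, so the conditional law of $Z_1$ is a tilting by a decreasing likelihood ratio and is therefore stochastically dominated by the unconditional law (the paper packages this via an auxiliary geometric variable $D_n$ and a direct ratio computation rather than the single-crossing lemma, but the content is identical). You also spell out the induction via the branching identity $E_{\bf Q}[Z_i\mid Z_n=0]=E_{\bf Q}[Z_1\mid Z_n=0]\,E_{\bf Q}[Z_{i-1}\mid Z_{n-1}=0]$ for the ``in particular'' claim, which the paper leaves implicit; that step and your handling of the degenerate cases are fine.
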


\begin{proof}
Denoting $D_n$ a geometric random variable of parameter $1-{\bf Q}[Z_{n-1}=0]$ which is independent of $Z_1$, we have ${\bf Q}[Z_1\leq k |Z_{n}=0]= 
{\bf Q}[Z_1\leq k | Z_1 < D_n]$. Then compute
\begin{align*}
{\bf Q}[Z_1\leq k | Z_1 < D_n] & = \frac{\sum_{j=0}^k {\bf Q}[D_n > j]  {\bf Q}[Z_1=j]}{\sum_{j=0}^{\infty} {\bf Q}[D_n > j] {\bf Q}[Z_1=j]} \\
& = \left(1+ \frac {\sum_{j=k+1}^{\infty} {\bf Q}[D_n> j] {\bf Q}[Z_1=j]}{\sum_{j=0}^k {\bf Q}[D_n> j] {\bf Q}[Z_1=j]}\right)^{-1},
\end{align*}
now use that for all $j'<k<j$ we have ${\bf Q}[D_n> j] \leq {\bf Q}[D_n > k] \leq {\bf Q}[D_n> j']$, yielding 
\[
{\bf Q}[Z_1\leq k | Z_1 < D_n] \geq \frac{\sum_{j=0}^k  {\bf Q}[Z_1=j]}{\sum_{j=0}^{\infty}  {\bf Q}[Z_1=j]}= {\bf Q}[Z_1 \leq k].
\]
\end{proof}

We can now estimate $E_{\bf Q}[\Lambda_i]$.
\begin{lemma}
\label{L}
For all $i\geq 0$,
\[
E_{\bf Q}[\Lambda_i] \leq \frac{{\widetilde C}_{\psi}}{1-(\beta {\bf f}'(q))^{-1}}({\bf f}'(q) \beta)^i.
\]
\end{lemma}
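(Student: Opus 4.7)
The strategy is to take expectations in (\ref{notationLi}), use Wald's identity to separate the subtrap weights from the multipliers $\phi_i-1$ and $\psi_i-\phi_i$, bound those multipliers by Lemma~\ref{W}, and then estimate the expected weight of a single subtrap using Lemma~\ref{Z}.

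First, since each sequence $(\Pi_{i-1}^{i,j,1})_{j\ge 1}$ and $(\Pi_i^{i,j,2})_{j\ge 1}$ is i.i.d.\ and jointly independent of $(\phi_n,\psi_n)_{n\ge 0}$ by construction, Wald's identity gives
\[
E_{\bf Q}[\Lambda_i]=E_{\bf Q}[\phi_i-1]\,E_{\bf Q}[\Pi_{i-1}^{i,1,1}]+E_{\bf Q}[\psi_i-\phi_i]\,E_{\bf Q}[\Pi_i^{i,1,2}].
\]
Since $\phi_i-1\le \psi_i$ and $\psi_i-\phi_i\le \psi_i$, both multipliers are at most $E_{\bf Q}[\psi_i]\le {\widetilde C}_\psi$ by Lemma~\ref{W}.

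Next I would rewrite the weight of a single subtrap in terms of the ${\bf h}$-Galton--Watson tree attached at the bud $y$. The subtrap $(x\cup V,[x,y]\cup E)$ has $x$ at subtrap-level $0$, $y$ at subtrap-level $1$, and, for each $\ell\ge 0$, exactly $Z_\ell^{y}$ vertices sitting at subtrap-level $\ell+1$. Combined with the conductance $\beta^{\ell+1}$ on edges between subtrap-levels $\ell$ and $\ell+1$, this yields
\[
\Pi\;=\;2\sum_{\ell\ge 0}\beta^{\ell+1}Z_\ell^{y}\;=\;2\beta\sum_{m\ge 0}\beta^{m}Z_m^{y}.
\]
The underlying tree of $\Pi_{i-1}^{i,1,1}$ (resp.\ $\Pi_i^{i,1,2}$) is conditioned on having height strictly less than $i-1$ (resp.\ $i$), that is on $Z_{i-1}^{y}=0$ (resp.\ $Z_i^{y}=0$). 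Lemma~\ref{Z} then gives $E_{\bf Q}[Z_m^{y}\mid Z_N^{y}=0]\le {\bf f}'(q)^m$ (vanishing once $m\ge N$), and summing the resulting truncated geometric series in the ratio $r:=\beta{\bf f}'(q)>1$ produces
\[
E_{\bf Q}[\Pi_i^{i,1,2}]\;\le\;2\beta\sum_{m=0}^{i-1}r^{m}\;\le\;\frac{C\,r^{i}}{1-r^{-1}},
\]
and analogously for $E_{\bf Q}[\Pi_{i-1}^{i,1,1}]$, with $C$ depending only on $\beta$ and ${\bf f}'(q)$. Plugging both bounds into the Wald identity and absorbing the remaining multiplicative constants into the notation ${\widetilde C}_\psi$ yields the stated inequality.

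The only real obstacle is parsing the definitions of~(\ref{Sdef}) and $\Pi_n^{i,j,k}$ correctly: the conductance $\beta^{\ell+1}$ refers to a subtrap-level index, and the height conditioning applies to the ${\bf h}$-Galton--Watson tree beneath the bud (not to the entire subtrap). Once these are pinned down, the proof is just a short geometric-series computation combining Lemma~\ref{W} and Lemma~\ref{Z}.
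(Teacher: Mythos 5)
Your proof follows the paper's argument exactly: take expectations in (\ref{notationLi}) via Wald's identity, bound both multipliers $E_{\bf Q}[\phi_i-1]$ and $E_{\bf Q}[\psi_i-\phi_i]$ by $\sup_i E_{\bf Q}[\psi_i]\le \widetilde C_\psi$ using Lemma~\ref{W}, and control $E_{\bf Q}[\Pi]$ by applying Lemma~\ref{Z} to the conductance representation of the subtrap weight and summing the geometric series in $\beta\,{\bf f}'(q)>1$. The only discrepancy is your explicit prefactor $2\beta$, which makes your final constant larger than the one stated in the lemma (the paper's own proof is equally cavalier about this factor); this is harmless, since the lemma is only ever invoked up to multiplicative constants.
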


\begin{proof}
Using~(\ref{notationLi}), Lemma~\ref{W} and Lemma~\ref{Z}, we get
\[
E_{\bf Q}[\Lambda_i]=E_{\bf Q}[\phi_{i}-1]E_{\bf Q}[\Pi_{i-1}]+E_{\bf Q}[\psi_{i}-\phi_{i}]E_{\bf Q}[\Pi_i] \leq E_{\bf Q}[\Pi_i]\sup_{i\in \N}E_{\bf Q}[\psi_i]\leq {\widetilde C}_{\psi} \sum_{j=1}^{i} {\bf f}'(q)^j\beta^j,
\]
and the result follows immediately, since $\beta {\bf f}'(q)> 1$.
\end{proof}

Finally, we get the following
\begin{proposition}{\label{Sinfty}}
We have 
\[
E_{\bf Q}[S_{\infty}]\leq \frac{2 \widetilde{C}_{\psi}}{1-(\beta {\bf f}'(q))^{-1}}\left(\frac{\beta}{\beta-1}+  \frac 1 {1-{\bf f}'(q)}\right) <\infty.
\]
\end{proposition}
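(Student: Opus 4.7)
My plan is essentially direct computation: exchange expectation and the infinite sum, then use Lemma \ref{L} on each term. Since every summand in (\ref{notationSinfty}) is non-negative, Tonelli's theorem gives
\[
E_{\bf Q}[S_\infty] = 2\sum_{i=0}^\infty \beta^{-i}\bigl(1 + E_{\bf Q}[\Lambda_i]\bigr) = 2\sum_{i=0}^\infty \beta^{-i} + 2\sum_{i=0}^\infty \beta^{-i} E_{\bf Q}[\Lambda_i].
\]

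The first geometric series sums to $2\beta/(\beta-1)$, which relies only on $\beta > 1$. For the second, I would insert the bound from Lemma \ref{L}, namely $E_{\bf Q}[\Lambda_i] \leq \frac{\widetilde C_\psi}{1-(\beta{\bf f}'(q))^{-1}}({\bf f}'(q)\beta)^i$. Multiplying by $\beta^{-i}$ cancels the $\beta^i$, leaving a geometric series with common ratio ${\bf f}'(q) < 1$ (here I use the fact that the trap distribution ${\bf h}$ is subcritical, i.e. ${\bf h}'(1)={\bf f}'(q)<1$, which is a standard consequence of $p_0 > 0$). This yields
\[
2\sum_{i=0}^\infty \beta^{-i} E_{\bf Q}[\Lambda_i] \leq \frac{2\widetilde C_\psi}{1-(\beta{\bf f}'(q))^{-1}} \cdot \frac{1}{1-{\bf f}'(q)}.
\]

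To match the exact form of the stated inequality, where the factor $\widetilde C_\psi/(1-(\beta{\bf f}'(q))^{-1})$ appears in front of \emph{both} summands, I would check that this prefactor is at least $1$. Indeed $\widetilde C_\psi \geq E_{\bf Q}[\psi_1] \geq 1$ since $\psi_1 \geq 1$ by construction in (\ref{notationPSI}), while $1-(\beta{\bf f}'(q))^{-1} < 1$ because the standing assumption $\beta > 1/{\bf f}'(q)$ gives $\beta{\bf f}'(q) > 1$ (equivalently, $\gamma < 1$). Inserting this prefactor in front of the $2\beta/(\beta-1)$ term produces the claimed upper bound, and finiteness is manifest from the explicit formula.

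I do not anticipate any real obstacle: the argument is pure bookkeeping on top of Lemma \ref{L}. The only point worth flagging is that convergence of the two geometric series uses the conditions $\beta > 1$ and ${\bf f}'(q) < 1$ \emph{separately}, rather than the combined condition $\beta{\bf f}'(q) > 1$ that was already used inside Lemma \ref{L} to control the prefactor coming from $\sum_{j=1}^i ({\bf f}'(q)\beta)^j$.
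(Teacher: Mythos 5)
Your proof is correct and follows essentially the same route as the paper: apply Lemma \ref{L} termwise to $E_{\bf Q}[S_\infty]=2\sum_i\beta^{-i}(1+E_{\bf Q}[\Lambda_i])$ and sum the two resulting geometric series in $\beta^{-1}$ and ${\bf f}'(q)$. Your explicit check that the prefactor $\widetilde C_\psi/(1-(\beta{\bf f}'(q))^{-1})\geq 1$ (needed so it can legitimately be placed in front of the $\beta/(\beta-1)$ term as well) is a point the paper glosses over, and is a welcome bit of extra care.
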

\begin{proof}
Recalling Lemma~\ref{L}, we get
\[
E_{\bf Q}[S_{\infty}] \leq  2\sum_{i=0}^{\infty} \beta^{-i} E_{\bf Q}[1+\Lambda_i]\leq  \frac{2 \widetilde C_{\psi}}{1-(\beta {\bf f}'(q))^{-1}}  \sum_{i=0}^{\infty} \beta^{-i} (1+ (\beta {\bf f}'(q))^i) <\infty,
\]
the last term being finite since $ {\bf f}'(q)<  1$.
\end{proof}

\section{Sketch of the proof}{\label{sketch}}

In the first step, we show (see Theorem~\ref{cuts}) that the time is essentially spent in $h_n$-traps.

Then we show that these $h_n$-traps are far away from each other, and thus the correlation between the time spent in different $h_n$-traps can be neglected. Moreover the number of $h_n$-traps met before level $n$ is roughly  $\rho C_a n^{\epsilon}$. 
Let
\begin{equation}
\label{chi0def}
\chi_0(n) = \text{ the time spent in the first } h_n -\text {trap met }
\end{equation}
where we point out that there can be several visits to this trap.
At this point we have reduced our problem to estimating
\[
\Delta_n \approx  \chi_1(n)+ \cdots + \chi_{\rho C_an^{\epsilon}}(n),
\]
where $\chi_i(n)$ are i.i.d.~copies of $\chi_0(n)$.

Now we decompose the time spent in the first $h_n$-trap according to the number of excursions in it starting from the $root$
\[
\chi_1(n)= \sum_{i=1}^{W_n} T_0^{(i)},
\]
where $W_n$ denotes the number of visits of the trap until time $n$ and $T_0^{(i)}$ an i.i.d.~sequence of random variables measuring the time spent during an excursion in a big trap. It is important to notice that the presence of an $h_n$-trap at a vertex gives information on the number of traps at this vertex, and thus on the geometry of the backbone. So the law of $W_n$ depends on $n$. Nevertheless we show that this dependence can be asymptotically neglected, and that for large $n$, $W_n$ is close to some random variable $W_{\infty}$ (Proposition~\ref{Winfty}).

Now we have essentially no more correlations between what happens on the backbone and on big traps. The only thing left to understand is the time spent during an excursion in an $h_n$-trap from the $root$. To simplify if the walker does not reach the point $\delta$ in the trap (this has probability $\approx 1- p_{\infty}$), the time in the trap can be neglected. Otherwise, the time spent to go to $\delta$, and to go directly from $\delta$ back to the $root$ of the trap can also be neglected, in other words, only the successive excursions from $\delta$ contribute to the time spent in the trap. This is developed in Section~\ref{bottom}, and we have 
\begin{equation}
\label{skapprox2}
\chi_1(n)\approx \sum_{i=1}^{\text{Bin}(W_{\infty},p_{\infty})} \sum_{j=0}^{G^{(i)}-1} T_{exc}^{(i,j)},
\end{equation}
where $T_{exc}^{(i,j)}$ are i.i.d.~random variables giving the lengths of the excursions from $\delta$ to $\delta$.
Further, $G^{(i)}$ is the number of excursions from $\delta$ during the $i$-th excursion in the trap: it is a geometric random variable with a parameter of order $\beta^{-H}$. Since $\beta^{-H}$ is very small ($H$ being conditioned to be big), the law of large numbers should imply that
\[
\sum_{j=0}^{G^{(i)}-1} T_{exc}^{(i,j)} \approx G^{(i)} E^{\omega}[T_{exc}^{(i,j)}] \approx G^{(i)} S_{\infty},
\]
and also we should have $G^{(i)}-1 \approx \beta^H {\bf e}_i$. This explains why, recalling (\ref{Zdef}),
\[
\chi_1(n)\approx  \beta^H \mathcal{Z}_{\infty}.
\]

We are then reduced to considering sums of i.i.d~random variables of the form $Z_i\beta^{X_i}$ with $X_i$ integer-valued. This is investigated in Section~\ref{sect_sum_iid}. We then finish the proof of Theorem \ref{subsequ} in Section 
\ref{lim-thm}.

\begin{remark}
The reasoning fails in the critical case $\gamma=1$, indeed in this case we have to consider a critical height $h_n$ which is smaller. This causes many problems, in particular in big traps there can be big subtraps and so, for example, the time to go from the top to the bottom of a trap cannot be neglected anymore. 
\end{remark}

\section{The time is essentially spent in big traps}\label{onlybig}

We recall that $h_n=\lceil -(1-\epsilon)\ln n/\ln {\bf f}'(q)\rceil$. Lemma \ref{tailH} gives the probability that a trap is an $h_n$-trap:  
\begin{equation}
\label{notationetan}
\eta_n:={\bf Q}[H\geq h_n] \sim \alpha {\bf f}'(q)^{h_n},
\end{equation}

For $x \in backbone$, we denote 
\begin{equation} 
\label{Ldef}
L_x \text{ the set of traps rooted at } x
\end{equation}
(if $x$ is not in the backbone then $L_x = \emptyset$). Let us denote the vertices in big traps by 
$L(h_n)=\{y \in T(\omega): y \text{ is in an } h_n -\text{ trap.}\}$.

Our aim in this section is to show the following
\begin{proposition}{\label{cuts}}
For $\epsilon>0$, we have 
\[
\text{for all $t\geq 0$,} \qquad \PR\left[\abs{\frac{\Delta_n-\chi(n)}{n^{1/\gamma}}} \geq t\right] \to 0
\]
where 
\begin{equation}\label{kidef}
\chi(n)=\card \{ 1 \leq i \leq \Delta_n : X_{i-1}, X_i \in L(h_n)\}
\end{equation}
is the time spent in big traps up to time $\Delta_n$.
\end{proposition}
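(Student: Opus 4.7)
The strategy is to split $\Delta_n-\chi(n)$ into the time spent on the backbone and the time spent strictly inside traps of height below $h_n$, and to show that each is $o(n^{1/\gamma})$ in $\PR$-probability. Every step contributing to $\Delta_n-\chi(n)$ has at least one endpoint outside $L(h_n)$, i.e.\ either on the backbone or strictly inside a trap of height $<h_n$. Writing $T_{\mathrm{sm}}$ for the number of steps whose first endpoint is strictly inside a trap of height $<h_n$, we obtain $\Delta_n-\chi(n)\le \Delta_n^Y+T_{\mathrm{sm}}$, and it suffices to control each term.

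The backbone piece is easy: the distance $|Y_n|$ stochastically dominates a $\beta$-biased random walk on $\Z$, so it has a strictly positive limiting speed $v_Y>0$ and $\Delta_n^Y\sim n/v_Y$ with $\PR$-probability tending to one. Since $1/\gamma>1$, this gives $\Delta_n^Y/n^{1/\gamma}\to 0$ in $\PR$-probability.

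For $T_{\mathrm{sm}}$ the plan is to bound its annealed mean and conclude by Markov's inequality. Using the super-regeneration structure of Section~\ref{sectionconstruction} and Proposition~\ref{regenstruc1}, $T_{\mathrm{sm}}$ is (up to the first block) a sum of $O(n)$ i.i.d.\ contributions, with per-block visit counts to any given backbone vertex having uniformly bounded moments. Conditional on the backbone and on the walk on it, the attached traps are i.i.d.\ of law $\mathbf{Q}$, so the total small-trap contribution in one block factorizes as the sum over visited backbone vertices $x$ of (number of visits to $x$) $\times$ (expected small-trap time per visit to $x$). For a trap $\ell$ of height $H$ rooted at $x$, scaling the conductances so that the edge joining $x$ to the bud has conductance $1$ and applying Lemma~\ref{meanreturntime} to the electrical network $\{x\}\cup\ell$, one excursion from $x$ into $\ell$ has expected duration $2\bigl(1+\sum_{j=1}^H Z_j\beta^j\bigr)$. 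Averaging under $\mathbf{Q}$ while restricting to $H<h_n$ and using $E_{\mathbf{Q}}[Z_j]\le {\bf f}'(q)^j$ (Lemma~\ref{Z}), one obtains
\[
E_{\mathbf{Q}}\Bigl[\mathbf{1}_{\{H<h_n\}}\Bigl(1+\sum_{j=1}^H Z_j\beta^j\Bigr)\Bigr]\le C\sum_{j=0}^{h_n}(\beta\,{\bf f}'(q))^j\le C'(\beta\,{\bf f}'(q))^{h_n}.
\]
Combining $\beta\,{\bf f}'(q)=\beta^{1-\gamma}$ with $(\ref{notationhn})$ gives $(\beta\,{\bf f}'(q))^{h_n}=n^{(1-\epsilon)(1-\gamma)/\gamma}$. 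Multiplying by the $O(n)$ blocks and by the uniformly bounded mean number of traps per backbone vertex (finite thanks to $\mathbf{E}[Z^2]<\infty$), we get $\ES[T_{\mathrm{sm}}]\le Cn^{1+(1-\epsilon)(1-\gamma)/\gamma}=Cn^{1/\gamma-\epsilon(1-\gamma)/\gamma}=o(n^{1/\gamma})$, and Markov's inequality yields the result.

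The main obstacle is justifying the factorization used above: under the quenched law the trap's random geometry is fixed, while the number of visits made by the walk to its root is a complicated, environment-dependent quantity. What makes the annealed computation go through is the conditional independence provided by the Harris decomposition (traps i.i.d.\ given the backbone) together with the super-regeneration structure, which keeps the per-block number of backbone visits uniformly integrable and independent across blocks, so that the expectation of a product becomes the product of expectations.
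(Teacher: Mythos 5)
Your proposal is correct and follows essentially the same route as the paper: the same decomposition $\Delta_n=\Delta_n^Y+\chi(n)+(\text{small-trap time})$, the same key estimate that the expected excursion time into a trap of height $<h_n$ is $O\bigl((\beta\,{\bf f}'(q))^{h_n}\bigr)=O\bigl(n^{(1-\epsilon)(1/\gamma-1)}\bigr)$ via Lemma~\ref{meanreturntime} and Lemma~\ref{Z}, and a first-moment (Markov/Chebyshev) conclusion. The only cosmetic difference is that the paper controls the number of excursions into small traps by conditioning on explicit good events ($A_1,A_2,A_3$, giving a deterministic $O(n\ln n)$ bound on the number of excursions), whereas you bound its expectation directly through the regeneration-block structure; both exploit the same conditional independence of the traps given the walk on the backbone and buds, and either suffices given the $n^{-\epsilon(1/\gamma-1)}$ margin.
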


Define
\begin{itemize}
\item[(i)] $A_1(n)=\{\Delta_n^Y \leq C_1 n \}$,
\item[(ii)]$A_2(n)=\{\card \cup_{i=1}^{\Delta_n^Y} L_{Y_i} \leq C_2 n \}$
\item[(iii)]$A_3(n)=\Bigl\{\displaystyle{\max_{\ell \in L_{Y_i}, i\leq \Delta_n^Y}} \card \{ 0 \leq i \leq \Delta_n^Y : Y_i \in \ell, \ X_{\sigma_i+1}\in \ell \}\leq C_3 \ln n\Bigr\}$,
\item[(iv)] $A(n)=A_1(n)\cap A_2(n) \cap A_3(n)$.
\end{itemize}

The following lemma tells us that typically the walk spends less than $C_1n$ time units before reaching level $n$, sees less than $C_2n$ traps and enters each trap at most $C_3 \ln n$ times.

\begin{lemma}\label{A1}
For appropriate constants $C_1$, $C_2$ and $C_3$, we have 
\[
\PR[A_1(n)^c]=o(n^{-2}) \text{ and } \PR[A(n)^c] \to 0.
\]
\end{lemma}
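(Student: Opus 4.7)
The plan is to treat the three events separately and exploit the super-regeneration decomposition of $(Y_n)$ throughout. First, for $A_1(n)$, observe that $|Y_{\tau_{k+1}}|-|Y_{\tau_k}| \geq 1$ (a super-regeneration time strictly increases the running level), so $\Delta_n^Y \leq \tau_1 + \sum_{k=1}^{n}(\tau_{k+1}-\tau_k)$. Since the super-regeneration increments have finite exponential moments (as recalled just before Remark~\ref{avantregen}), a Cram\'er-type large-deviation bound for i.i.d.~sums yields $\PR[\Delta_n^Y > C_1 n] \leq e^{-cn}$ for $C_1$ large enough, which is much stronger than $o(n^{-2})$.

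For $A_2(n)^c$, I would bound the number of traps rooted at visited backbone vertices by a sum $\sum_{k}\mathcal{N}_k$, where $\mathcal{N}_k$ is the total number of traps attached to the backbone vertices visited during the $k$-th super-regeneration block. By Proposition~\ref{regenstruc1} and Remark~\ref{regenstruc2}, the $(\mathcal{N}_k)_{k\geq 1}$ are i.i.d., and their common mean is finite: the number of distinct backbone vertices in a block is dominated by $\tau_2-\tau_1$ (exponential tails), and, given the backbone, each such vertex carries a number of traps whose conditional mean is uniformly finite under ${\bf E}[Z^2]<\infty$. On $A_1(n)$, at most $C_1 n$ blocks have been completed, and the weak law of large numbers gives $\PR\bigl[\sum_{k=1}^{C_1 n}\mathcal{N}_k > C_2 n\bigr] \to 0$ for any $C_2 > C_1\, \ES[\mathcal{N}_1]$; combined with Step 1, this yields $\PR[A_2(n)^c]\to 0$.

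For $A_3(n)^c$, the crucial observation is that any backbone vertex $x$ belongs to a unique super-regeneration block: after $\tau_k$ the walk never returns to levels visited before $\tau_k$. Hence the total number $V_Y(x)$ of visits of $Y$ to $x$ satisfies $V_Y(x) \leq \tau_{k+1}-\tau_k$ for the appropriate $k$, so $\PR[V_Y(x)\geq m]\leq c_1 e^{-c_2 m}$ uniformly in $x$ for constants $c_1,c_2>0$. The number of times the full walk enters any trap rooted at $x$ is bounded by $V_Y(x)$, and a union bound over the at most $C_1 n$ vertices visited under $A_1(n)$ yields
\[
\PR\bigl[A_3(n)^c \cap A_1(n)\bigr] \leq C_1 n \cdot c_1\, n^{-c_2 C_3} \to 0
\]
for $C_3$ chosen large enough that $c_2 C_3 > 1$. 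Combining the three estimates gives $\PR[A(n)^c]\to 0$.

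The main technical point is verifying in Step 2 that $\ES[\mathcal{N}_1]<\infty$; this reduces to checking via the Harris decomposition (Section~\ref{sectionconstruction}) that the conditional expectation of the number of extinct children at a backbone vertex is uniformly finite, which is a short computation using ${\bf E}[Z^2]<\infty$. The rest is a straightforward application of standard regeneration/large-deviation tools.
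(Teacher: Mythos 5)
Your treatment of $A_1(n)$ and $A_2(n)$ is essentially sound and close to the paper's: the paper obtains $\PR[A_1(n)^c]=o(n^{-2})$ by comparison with the $\beta$-biased walk on $\Z$ (equivalent in substance to your Cram\'er bound on the super-regeneration increments), and for $A_2(n)$ it simply dominates $\card\cup_i L_{Y_i}$ on $A_1(n)$ by a sum of $C_1 n$ i.i.d.\ copies of $\card L_0$, whose mean is at most ${\bf m}/(1-q)$; this sidesteps your deferred verification that $\ES[\mathcal{N}_1]<\infty$. Your block decomposition works too, but note that the block length and the trap counts at the visited vertices are not independent, so you would need a Cauchy--Schwarz or Wald-type argument (using ${\bf E}[Z^2]<\infty$ and the exponential tail of $\tau_2-\tau_1$) to close that point.

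The genuine gap is in your step for $A_3(n)$. The claim that the number of entries of the full walk $X$ into a trap rooted at $x$ is bounded by the number of visits of the embedded walk $Y$ to $x$ is false. By the definition of $\sigma_n$, a step from $x$ to a bud is not a backbone-to-backbone move, and neither is the return step; hence arbitrarily many excursions of $X$ from $x$ into its traps can occur between two consecutive increments of the $Y$-clock without producing a single new visit of $Y$ to $x$. Consequently $V_Y(x)\leq\tau_{k+1}-\tau_k$ gives no control whatsoever on the quantity in $A_3(n)$. (Switching to the walk $Y'$ on backbone and buds does not immediately repair this: the number of $Y'$-visits to $x$ per backbone move is geometric with a parameter depending on the number of buds at $x$, hence not uniformly exponential over environments.) The paper's argument, which is the real content of the lemma, is a coupling: fix a backbone offspring $z$ of $x$; since the conductances from $x$ to the bud $y$ and to $z$ are equal, the number of transitions $x\to y$ before the first transition $x\to z$ is dominated by $G(1/2)$, while the total number of transitions $x\to z$ is dominated by $G(p_\infty)$ because from $z$ the walk escapes forever with probability at least $p_\infty$. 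Composing gives a $G(p_\infty/2)$ domination for the number of entries into any fixed trap, uniformly in the environment, and a union bound over the at most $C_2 n$ traps available on $A_2(n)$ yields $\PR[A_3(n)^c\cap A_2(n)]\leq C n^{1+C_3\ln(1-p_\infty/2)}\to 0$ for $C_3$ large. Some version of this comparison is needed; the regeneration structure alone does not deliver it.
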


\begin{proof}
By a comparison to the $\beta$-biased random walk on $\Z$, standard large deviations estimates yields
\[
\PR[A_1(n)^c]=o(n^{-2}),
\]
for $C_1$ large enough.

On $A_1(n)$, the number of different vertices visited by $(Y_i)_{i\geq 0}$ up to time $\Delta_n^Y$ is at most $C_1 n$. The descendants at each new vertex are drawn independently of the preceding vertices. Moreover at each vertex the mean number of traps is at most the mean number of children, thus  ${\bf E}[\card L_0] \leq {\bf m}/(1-q)$. The law of large numbers yields for $C_2 > C_1 {\bf m}/(1-q)$ that 
\[
\PR[A_2(n)^c] \leq \PR\Bigl[\sum_{i=0}^{C_1 n} \card L_0^{(i)} > C_2 n\Bigr]+ \PR[A_1(n)^c] \to 0,
\]
where $\card L_0^{(i)}$ are i.i.d.~random variables with the law of $\card L_0$. This yields the second part.

For $A_3(n)$, we want, given a vertex $x$ in the backbone and any $\ell\in L_x$ to give an upper bound on the number of transitions from $x$ to $y$, where $y$ is the bud associated to $\ell$. Let $z$ be an offspring of $x$ in the backbone. Then, 
at each visit to $x$, either the walker does not visit $y$ or $z$, or it has probability $1/2$ to visit $y$ first (or $z$ first). Hence,
\begin{itemize}
\item[(i)] the number of transitions from $x$ to $y$ before reaching $z$ is dominated by a geometric random variable of parameter $1/2$,
\item[(ii)] the number of transitions from $x$ to $z$ is dominated by a geometric random variable of parameter $p_{\infty}$, since the escape probability from $z$ is at least $p_{\infty}$.
\end{itemize}

Consequently the number of transitions from $x$ to $y$ is dominated by a geometric random variable of parameter $p_{\infty}/2$. Thus
\[
\PR[A_3(n)^c\cap A_2(n)] \leq C_2 n \PR\Bigl[G(p_{\infty}/2)\geq C_3 \ln n\Bigr]\leq C n^{C_3\ln(1- p_{\infty}/2)+1},
\]
and if we take $C_3$ large enough we get the result.
\end{proof}

Now we can start proving Proposition~\ref{cuts}. Decompose $\Delta_n$ into
\begin{equation}
\label{decomposdelta}
\Delta_n = \Delta_n^Y+\chi(n) + \sum_{\ell \in \displaystyle{\cup_{i=0}^{\Delta_n^Y} L_{Y_i}\setminus L(h_n)}} N(\ell),
\end{equation}
where $N(\ell) = \card \{ 1 \leq i \leq \Delta_n : X_{i-1} \in \ell, X_i \in \ell\}$.

The distribution of $N(\ell)$ conditioned on the backbone, the buds and $(Y_i')_{i\leq \Delta_n^{Y'}}$, the walk on the backbone and the buds, is $\sum_{i=1}^{E_{\ell}} R_{\ell}^{(i)}$. Here we denoted $E_{\ell}$ the number of visits to $\ell$ and $R_{\ell}^{(i)}$ is the return time during the $i$-th excursion from the top of $\ell$. These quantities are considered for traps $\ell$, conditioned to have height at most $h_n$. 

Obviously we get from~(\ref{decomposdelta}) that
\begin{equation}
\label{theorem1_minoration}
\Delta_n\geq \chi(n).
\end{equation}

From~(\ref{decomposdelta}) we get for $t>0$,
\begin{equation}{\label{theorem1_decompo}}
\begin{array}{ll}
\PR\Bigl[\frac{\Delta_n-\chi(n)}{n^{1/\gamma}}>t\Bigr] & \leq \PR[A(n)^c] +\PR\Bigl[C_1n + \sum_{i=0}^{C_2 n}\sum_{j=0}^{C_3 \ln n} R_{\ell}^{(i)} \geq tn^{1/\gamma} \Bigr] \\
                                                      & \leq o(1) + \PR\Bigl[\sum_{i=0}^{C_2 n}\sum_{j=0}^{C_3 \ln n} R_{\ell}^{(i)} \geq \frac t 2  n^{1/\gamma}\Bigr],
\end{array}
\end{equation}
where we used Lemma~\ref{A1}.

Chebyshev's inequality yields,
\begin{align*}
\PR\Bigl[\sum_{i=0}^{C_2 n}\sum_{j=0}^{C_3 \ln n} R_{\ell}^{(i)} \geq \frac t 2 n^{1/\gamma} \Bigr] &\leq \frac 2 {tn^{1/\gamma}} \ES\Bigl[\sum_{i=0}^{C_2 n}\sum_{j=0}^{C_3 \ln n} R_{\ell}^{(i)}\Bigr]\leq \frac {2 C_2C_3 n^{1-1/\gamma} \ln n} t \ES[R_{1}^{(1)}]\, .
\end{align*}
Using Lemma~\ref{Z} and Lemma~\ref{meanreturntime}, we have
\begin{align*}  
\ES[R_{1}^{(1)}]&=E_{\bf Q}[E^{\omega}_{root}[T_{root}^+]|H< h_n] = 2\sum_{i=0}^{h_n-1} \beta^i E_{\bf Q}[Z_n|H< h_n]\\
 &\leq 2\sum_{i=0}^{h_n-1} (\beta {\bf f}'(q))^{i} \leq C n^{(1-\epsilon)(-1+ 1 /\gamma)}.
 \end{align*}

Plugging this in the previous inequality, we get for any $\epsilon>0$ and $t>0$ 
\[
\PR\Bigl[\sum_{i=0}^{C_2 n}\sum_{j=0}^{C_3 \ln n} R_{\ell}^{(i)} \geq \frac t 2 n^{1/\gamma}\Bigr]=o(1),
\]
thus recalling~(\ref{theorem1_decompo}) and~(\ref{theorem1_minoration}) we have proved Proposition~\ref{cuts}. \qed

\section{Number of visits to a big trap}{\label{secWinfty}}

We denote $K_x=\max_{\ell \in L_x} H(\ell)$, the height of the biggest trap rooted at $x$ for $x\in backbone$, where we recall that $H$ denotes the height of the trap from the bud and not from the $root$.

\begin{lemma}{\label{trapproba}}
We have
\[
{\bf P}[K_0 \geq h_n] \sim C_a {\bf f}'(q)^{h_n},
\]
where $C_a= \alpha q \frac{{\bf m}-{\bf f}'(q)}{1-q}$, recalling Lemma \ref{tailH} for the definition of $\alpha$.
\end{lemma}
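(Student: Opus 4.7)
My plan is to condition on the number $N_0$ of traps attached to the root $0$ and reduce the asymptotics to the tail estimate of Lemma~\ref{tailH}, coupled with an explicit computation of ${\bf E}[N_0]$ coming from the Harris-type decomposition of Section~\ref{sectionconstruction}.

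First, by that decomposition, conditionally on $N_0$ the heights of the traps rooted at $0$ are i.i.d.\ copies of $H$ under ${\bf Q}$. Setting $\eta_n:={\bf Q}[H\geq h_n]$, this gives
\[
{\bf P}[K_0\geq h_n\mid N_0]=1-(1-\eta_n)^{N_0}.
\]
By Bernoulli's inequality, $0\leq (1-(1-\eta_n)^{N_0})/\eta_n \leq N_0$, and the left-hand side tends pointwise to $N_0$ as $\eta_n\to 0$. Since $N_0\leq Z$ is integrable under ${\bf P}$ (conditioning on non-extinction only inflates the mean by the factor $1/(1-q)$, and ${\bf m}<\infty$), dominated convergence yields ${\bf P}[K_0\geq h_n]/\eta_n\to {\bf E}[N_0]$. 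Combining this with Lemma~\ref{tailH} gives ${\bf P}[K_0\geq h_n]\sim {\bf E}[N_0]\,\alpha\,{\bf f}'(q)^{h_n}$, so the statement reduces to the identity ${\bf E}[N_0]=q({\bf m}-{\bf f}'(q))/(1-q)$.

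To obtain this identity, I would recall that under ${\bf P}$ the root has $Z$ children with law $(p_k)$, conditioned on the event that at least one of them has an infinite line of descent. Marking each child independently as ``surviving'' with probability $1-q$ and ``dying'' with probability $q$, $N_0$ is exactly the number of dying children. Letting $B_k\sim\mathrm{Bin}(k,1-q)$ stand for the number of survivors given $Z=k$, and using ${\bf E}[k-B_k]=kq$ together with ${\bf E}[(k-B_k)\mathbf{1}\{B_k=0\}]=kq^{k}$, I compute
\[
(1-q)\,{\bf E}[N_0]=\sum_{k\geq 1} p_k\,{\bf E}\bigl[(k-B_k)\mathbf{1}\{B_k\geq 1\}\bigr]=\sum_{k\geq 1} p_k\bigl(kq-kq^k\bigr)=q\,{\bf m}-q\,{\bf f}'(q),
\]
which, multiplied by $\alpha$, recovers the constant $C_a$ in the statement.

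I do not foresee a serious obstacle. The only points requiring some care are the correct handling of the size-bias induced by conditioning on non-extinction when identifying the law of $N_0$, and the routine justification of the dominated convergence step; both are straightforward given the assumption ${\bf m}<\infty$.
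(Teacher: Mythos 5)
Your proof is correct. It rests on the same structural fact as the paper's: under the Harris decomposition, the root carries $N_0=Z-Z^*$ finite subtrees whose heights are, given $N_0$, i.i.d.\ with the law of $H$ under ${\bf Q}$, so that ${\bf P}[K_0\geq h_n\mid N_0]=1-(1-\eta_n)^{N_0}$. Where you diverge is in extracting the asymptotics: the paper computes ${\bf E}[(1-\eta_n)^{Z-Z^*}(1-\1{Z^*=0})]$ exactly via the bivariate generating-function identity $E[s^{Z-Z^*}t^{Z^*}]={\bf f}(sq+t(1-q))$ and then Taylor-expands ${\bf f}$ at $q$ and at $1$, whereas you bypass any closed form by applying dominated convergence (with the Bernoulli bound $1-(1-\eta_n)^{N_0}\leq N_0\eta_n$ and the integrable dominator $N_0\leq Z$) to reduce the lemma to the identity ${\bf E}[N_0]=q({\bf m}-{\bf f}'(q))/(1-q)$, which you then verify by a direct binomial computation; your handling of the conditioning on non-extinction (dividing by $P[Z^*\geq 1]=1-q$ and restricting to $\{B_k\geq 1\}$) is exactly right. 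The paper's route is slightly slicker in that the generating function packages the conditioning and the expectation in one formula; yours is more elementary and makes transparent that the constant $C_a/\alpha$ is nothing but the mean number of traps at a backbone vertex, which is conceptually consistent with how this constant reappears later (e.g.\ in the count $l_n\approx \rho C_a n{\bf f}'(q)^{h_n}$ of big traps). Either argument is complete.
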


\begin{proof}
We denote $Z$ the number of children of the root and $Z^*$ the number of children with an infinite line of descent. Let $P$ be the law of a ${\bf f}$-Galton Watson tree which is not conditioned on non-extinction and $E$ the corresponding expectation. Recall (\ref{notationetan}) and let $H^{(i)}, i= 1,2, \ldots $ be i.i.d.~random variables which have the law of the height of an ${\bf h}$-Galton-Watson tree, and are independent of $Z-Z^*$. Then
\[
{\bf P}[K_0 \geq h_n]={\bf P}\Bigl[\max_{i=1, \ldots ,Z-Z^*} H^{(i)} \geq h_n\Bigr] =1- \frac{E[(1-\eta_n)^{Z-Z^*} (1-\1{Z^*=0})]}{1-q},
\]
where the indicator function comes from the conditioning on non-extinction, which corresponds to $Z^*\neq 0$. 

Hence
\[
{\bf P}[K_0 \geq h_n]=1- \frac{E[(1-\eta_n)^{Z-Z^*}]-E[(1-\eta_n)^{Z-Z^*}0^{Z^*}]}{1-q},
\]
and using $E[s^{Z-Z^*}t^{Z^*}]={\bf f}(sq+t(1-q))$ (see~\cite{lycap}) we get
\[
{\bf P}[K_0 \geq h_n]=1-\frac{{\bf f}((1-\eta_n)q+1-q)-{\bf f}((1-\eta_n)q)}{1-q}.
\]

Now, using (\ref{notationetan}) and the expansion ${\bf f}(z-x)={\bf f}(z)-{\bf f}'(z)x+o(x)$ for $z\in \{q,1\}$, we get the result.
\end{proof}

Define the first time when we meet the root of an $h_n$-trap using the clock of $Y_n$,
\begin{equation}
\label{notationK}
K(n) = \inf \{i \geq 0 | K_{Y_{i}} \geq h_n\}.
\end{equation}

We also define ${\bf \ell}(n)$ to be an $h_n$-trap rooted at $Y_{K(n)}$, if there are several possibilities we choose one trap according to some predetermined order. We denote ${\bf b}(n)$ the associated bud.

We describe, on the event $0-\text{SR}$, the number of visits to $\ell (n)$, by the following random variable:
\begin{equation}
\label{notationW}
W_n=\card\{i: X_i=Y_{K(n)}, X_{i+1}={\bf b}(n)\},
\end{equation}
where $\omega$ is chosen under the law ${\bf P}[\cdot]$ and $X_n$ under $P_0^{\omega}[\cdot|0-\text{SR}]$. We will need the following bounds for the random variables $(W_n)_{n\geq 1}$.

\begin{lemma}
\label{tailW} We have $W_{n} \preceq G(p_{\infty}/3)$ for $n\in \N$, i.e. the random variables $W_n$ are stochastically dominated by a geometric random variable with parameter $p_{\infty}/3$.
\end{lemma}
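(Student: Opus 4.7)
The plan is to reduce $W_n$ to a geometric random variable by the strong Markov property, and then to bound the resulting escape probability via a symmetry argument at $x := Y_{K(n)}$ together with an electrical network comparison with the $\beta$-biased walk on $\Z$. Writing $y := \mathbf{b}(n)$ and $p(x,y) := P_x^{\omega}[T_y < \infty]$, I first note that the trap rooted at $y$ is almost surely a finite subtree connected to the rest of $T$ only through $x$, so after every $x \to y$ transition the walker returns to $x$ almost surely. Applying the strong Markov property at these successive returns gives
\[
P_0^{\omega}[W_n \geq k+1 \mid W_n \geq k] = p(x,y), \qquad k \geq 0,
\]
and hence $P_0^{\omega}[W_n \geq k] \leq p(x,y)^{k-1}$ for $k \geq 1$. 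Conditioning on $\{0\text{-}\mathrm{SR}\}$ does not affect this bound, since by Remark~\ref{regenstruc2} the number of visits to any given trap is independent of the super-regeneration event. It therefore remains to show $1 - p(x,y) \geq p_{\infty}/3$.

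Since $x$ lies on the backbone, it has at least one backbone child $z$, and the edges $[x,y]$ and $[x,z]$ carry the same conductance $\beta^{|x|}$. The first hitting time of $\{y,z\}$ depends only on the graph obtained from $T$ by removing the proper descendants of $y$ and of $z$; on this reduced graph, swapping $y \leftrightarrow z$ is a conductance-preserving automorphism, which gives $P_x^{\omega}[T_y < T_z] = P_x^{\omega}[T_z < T_y] = \alpha/2$ with $\alpha := P_x^{\omega}[T_{\{y,z\}} < \infty] \leq 1$. Because the trap rooted at $y$ can be reached from $z$ only through $x$, a further application of the strong Markov property at $T_x$ yields $P_z^{\omega}[T_y < \infty] = q_z\, p(x,y)$ with $q_z := P_z^{\omega}[T_x < \infty]$. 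Decomposing $p(x,y)$ according to which of $y,z$ is hit first,
\[
p(x,y) = \tfrac{\alpha}{2} + \tfrac{\alpha}{2} q_z\, p(x,y) \quad \Longrightarrow \quad p(x,y) = \frac{\alpha}{2 - \alpha q_z} \leq \frac{1}{2 - q_z}.
\]

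The last ingredient is $q_z \leq 1/\beta$. The subgraph of $T$ consisting of $x$, $z$, and one infinite line of descent from $z$ carries exactly the $\beta$-biased walk on $\Z$, for which the probability of descending one level equals $1/\beta$. Rayleigh's monotonicity principle (adding the remaining edges of $T$ only enlarges the effective conductance from $z$ to infinity and hence diminishes $q_z$) then gives $q_z \leq 1/\beta$. Substituting,
\[
p(x,y) \leq \frac{\beta}{2\beta-1}, \qquad 1 - p(x,y) \geq \frac{\beta-1}{2\beta-1} \geq \frac{\beta-1}{3\beta} = \frac{p_{\infty}}{3},
\]
so $P_0^{\omega}[W_n \geq k] \leq (1 - p_{\infty}/3)^{k-1}$ for $k \geq 1$, which is the desired stochastic domination.

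The main obstacle is the symmetry step. Although $T$ as a whole is certainly not symmetric under $y \leftrightarrow z$ (the finite trap below $y$ and the infinite backbone subtree below $z$ are of entirely different nature), the symmetry does hold on the correctly reduced graph, and one has to justify carefully that this reduced graph alone governs the quantities $P_x^{\omega}[T_y < T_z]$ and $P_x^{\omega}[T_z < T_y]$ and moreover that the argument is uniform in the (random) local geometry at $x$.
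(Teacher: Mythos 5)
Your proof is correct, and its quantitative core is genuinely different from the paper's. The paper argues softly: at each visit to $x=Y_{K(n)}$ the walker has probability at least $1/3$ of stepping to a backbone offspring before stepping to ${\bf b}(n)$ or $\overleftarrow{x}$, and the first hitting time of that offspring has probability at least $p_\infty$ of being a super-regeneration time; composing the two geometric variables as in the proof of Lemma~\ref{A1} yields the parameter $p_\infty/3$. You instead bound the quenched probability $P_x^\omega[T_{{\bf b}(n)}<\infty]$ directly, via the leaf-swap symmetry between the bud and a backbone child on the reduced network together with Rayleigh monotonicity giving $q_z=P_z^\omega[T_x<\infty]\le 1/\beta$. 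Your symmetry step is sound (up to $T_{\{y,z\}}$ the walk never moves \emph{from} $y$ or $z$, so its law is governed by the reduced graph in which $y$ and $z$ are exchangeable leaves carrying equal conductance), the algebra checks out, and your bound $1-p(x,y)\ge \frac{\beta-1}{2\beta-1}$ is in fact sharper than $p_\infty/3$, which you only invoke to match the statement; this is what your route buys. What it costs is that the conditioning on $0$-SR must then be handled separately, and this is the one soft spot: Remark~\ref{regenstruc2}, read precisely, says that \emph{given} the backbone trajectory $(Y_i)$ the excursions into traps are independent of the $U_i$'s, whereas the number of returns of $Y$ to $Y_{K(n)}$ is itself not independent of $0$-SR, so the conditioning does a priori change the law of $W_n$ (it should only reduce backtracking and hence only help, but that monotonicity is exactly what would need to be stated). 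The paper's proof avoids this by making the killing event of its geometric a super-regeneration, so the bound meshes with the conditioning by construction. Since you flag the issue and lean on the paper's own remark, I regard this as a presentational rather than substantive gap, but a self-contained version of your argument should add a sentence justifying that conditioning on $0$-SR does not increase the number of visits of $Y$ to $Y_{K(n)}$.
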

\begin{proof}
For $n\in \N$, starting from any point $x$ of the backbone, the walker has probability at least $1/3$ to go to an offspring $y$ of $x$ on the backbone before going to ${\bf b}(n)$ or $\overleftarrow{x}$. But the first hitting time of $y$ has probability at least $p_{\infty}$ to be a super-regeneration time. The result follows as in the proof of Lemma \ref{A1}.
\end{proof}

\begin{proposition}{\label{Winfty}}
There exists a random variable $W_{\infty}$ such that
\[
W_n \xrightarrow{d} W_{\infty}.
\]
where we recall that for the law of $W_n$, $\omega$ is chosen under the law ${\bf P}[\cdot]$ and $X_n$ under $P_0^{\omega}[\cdot|0-\text{SR}]$.
\end{proposition}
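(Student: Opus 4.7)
The plan is to decompose $W_n$ using the super-regeneration structure and, conditionally on the local environment, to identify its law as an explicit geometric distribution. On the event $0$-$\mathrm{SR}$, Proposition~\ref{regenstruc1} ensures that the super-regeneration blocks of $Y$ are i.i.d.\ after the first (which has exponential tails and is asymptotically irrelevant). Each block independently contains a backbone vertex carrying a trap of height $\geq h_n$ with probability $\pi_n = \Theta({\bf f}'(q)^{h_n}) \to 0$, by Lemma~\ref{trapproba} together with the exponential control on $\tau_2-\tau_1$. Consequently $Y_{K(n)}$ lies in the first such ``$h_n$-rich'' block, whose index is asymptotically a geometric random variable with parameter $\pi_n$, and the conditioning on $0$-$\mathrm{SR}$ becomes irrelevant in the limit.

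Given the local environment, the strong Markov property implies that each visit of $X$ to $Y_{K(n)}$ triggers exactly one of three independent events: (A) the next step is the distinguished bud ${\bf b}(n)$, with probability $p_A = \beta/(1+\beta(k_1+k_2))$, where $k_1$ and $k_2$ denote respectively the numbers of backbone children and of attached traps at $Y_{K(n)}$; (B) the walker moves elsewhere but eventually returns to $Y_{K(n)}$, with probability $p_B$; or (C) the walker moves elsewhere and never returns, with probability $p_C$. (Excursions into any trap return almost surely, so escape can happen only via the parent or a backbone child; in particular, the detailed structure of the distinguished big trap does not enter.) Summing the number of A's preceding the first C yields
\[
\PR[W_n = k \mid \text{environment}] = (1-r)\, r^k, \qquad r = \frac{p_A}{p_A+p_C},
\]
so conditionally on the environment, $W_n$ is geometric on $\{0,1,\ldots\}$ with parameter $1-r$.

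It therefore suffices to show that $(p_A,p_C)$ converges jointly in law as $n\to\infty$. For $p_A$, only the pair $(k_1,k_2)$ matters, and its law converges: each candidate vertex in the first $h_n$-rich block is selected with probability asymptotically proportional to $k_2\,\eta_n$, so the joint law of $(k_1,k_2)$ at $Y_{K(n)}$ has a well-defined limit in which $k_2$ is \emph{size-biased} (the remaining non-distinguished traps, conditioned on height $<h_n$, converge to unconditioned ${\bf h}$-Galton-Watson trees, but this does not enter $p_A$). For $p_C$, Proposition~\ref{regenstruc1} guarantees that after time $\tau_{k+1}$ the walker $X$ stays within descendants of $Y_{\tau_{k+1}}$ and hence cannot return to $Y_{K(n)}$; therefore $p_C$ depends only on the super-regeneration block containing $Y_{K(n)}$ together with the i.i.d.\ traps attached to its backbone vertices, and the block has exponentially decaying size. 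The joint law of the block structure and the size-biased local data converges, hence so does $p_C$. We conclude $W_n\xrightarrow{d}W_\infty$, where $W_\infty$ is the geometric mixture with the limiting random parameter; tightness is automatic from Lemma~\ref{tailW}.

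The main technical obstacle is proving the joint convergence in law of the super-regeneration block together with the size-biased local environment at $Y_{K(n)}$: one must combine the rare-event asymptotics of Lemma~\ref{trapproba} with the i.i.d.\ block structure from Proposition~\ref{regenstruc1}, and check that the size-biasing preserves the relevant exponential tails. Once this is done, the explicit geometric form of $W_n$ conditional on the environment makes the passage to the limit immediate.
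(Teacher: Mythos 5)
Your strategy is genuinely different from the paper's, but it contains a gap that is not merely technical. The paper never identifies the law of $W_n$ conditional on the environment, nor the limit law: it proves convergence by a Cauchy-type argument, constructing for $m\geq n$ a variable $\overline{W}_{m,n}$ that is \emph{exactly} equal in law to $W_n$ (because, given that a regeneration block contains an $h_n$-trap, the event that this trap is also an $h_m$-trap has probability $\eta_m/\eta_n$ independently of the driving uniforms, so $\overline{W}_{m,n}=W_n^{(G)}$ for an independent geometric index $G$), and then showing $\PR[\overline{W}_{m,n}\neq W_m\mid 0\text{-SR}]\to 0$ uniformly in $m\geq n$ by controlling the probability of two $h_n$-traps in one block (Lemma~\ref{uniqueness}). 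Combined with tightness this gives convergence without ever describing $W_\infty$. Your route instead requires identifying the conditional law of $W_n$ and proving joint convergence in law of $(p_A,p_C)$, i.e.\ of the size-biased local environment together with the block structure and the escape probability from $Y_{K(n)}$ (which depends on the entire infinite subtree of descendants, not only on the finite block, even though the \emph{event} of non-return is settled by the next super-regeneration). You explicitly defer this as ``the main technical obstacle,'' but it \emph{is} the proposition: the whole difficulty of Proposition~\ref{Winfty} is that the environment seen from $Y_{K(n)}$ carries an $n$-dependent conditioning (first vertex with an $h_n$-trap, past conditioned to have none, at least one attached trap of height $\geq h_n$), and your sketch does not show how to remove it. So as written the proposal does not constitute a proof.

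A second, more structural flaw: the claim that $W_n$ is exactly geometric given the environment uses that the excursion outcomes at successive visits to $Y_{K(n)}$ are i.i.d., which is true under $P_0^{\omega}$ but not under $P_0^{\omega}[\,\cdot\mid 0\text{-SR}]$. The conditioning on $0$-SR does not ``become irrelevant in the limit'': by Proposition~\ref{regenstruc1} the law of the walk inside \emph{every} regeneration block, viewed from the block's starting time, is the $0$-SR--conditioned law, so the block containing $Y_{K(n)}$ always carries this conditioning. Since $0$-SR is an event about the future of the driving sequence $(U_i)$ (Remark~\ref{probregen}), the conditioned walk is an $h$-process on an enlarged state space that includes the coupled $\Z$-walk, and the successive excursions from $Y_{K(n)}$ are then neither independent nor identically distributed; the exact identity $\PR[W_n=k\mid\text{environment}]=(1-r)r^k$ fails. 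This could perhaps be repaired (e.g.\ by conditioning on the full backbone trajectory), but it would have to be argued, and the paper's proof is designed precisely so that the conditioning on $0$-SR is carried along unchanged on both sides of the comparison rather than analyzed.
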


\begin{remark}\label{Wdom}
It follows from Lemma~\ref{tailW} that $W_{\infty}\preceq G(p_{\infty}/3)$.
\end{remark}

Fix $n\in \N^*$ and set $m\geq n$. We aim at comparing the law of $W_m$ with that of $W_n$ and to do that we want to study the behaviour of the random walk starting from the last super-regeneration time before an $h_n$-trap (resp. $h_m$-trap) is seen. This motivates the definition of the last super-regeneration time seen before time $n$,
\[
\Sigma(n):=\max\{0\leq i \leq n: \ i-\text{SR}\}.
\]

For our purpose it is convenient to introduce a modified version of $W_m$, which will coincide with high probability with it. For $m\geq n$, recall that $\theta$ denotes the time-shift for the walk and set
\[
\overline{K}(m,n)=\inf \{j \geq 0: K_{Y_j} \geq h_m,\ {\bf \ell}(m)\circ \theta_{\Sigma(j)}={\bf \ell}(n) \circ \theta_{\Sigma(j)}\},
\]
the first time the walker meets a $h_m$-trap which is the first $h_n$-trap of the current regeneration block and we denote by $b(m,n)$ the associated bud. Set
\[
\overline{W}_{m,n}=\card\{i: X_i=Y_{\overline{K}(m,n)}, X_{i+1}=b(m,n)\},
\]
where $\omega$ is chosen under the law ${\bf P}[\cdot]$ and $(U_i)_{i\leq \overline{K}(m,n)}$ under $P_0^{\omega}[\cdot|0 \text{-SR}]$.

\begin{lemma}{\label{modif}}
For $m\geq n$ we have that 
\[
\overline{W}_{m,n}\stackrel{d}{=} W_n.
\]
\end{lemma}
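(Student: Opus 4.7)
The plan is to use the i.i.d.~structure of super-regeneration blocks (Proposition~\ref{regenstruc1}) to express both $W_n$ and $\overline{W}_{m,n}$ as statistics of a single block under two different conditional events, and then to show that the two resulting laws coincide because the relevant statistic is insensitive to the internal structure of the first $h_n$-trap.

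Under $\PR[\,\cdot\,|\,0\text{-SR}]$, decompose the walk and its ambient environment into i.i.d.~super-regeneration blocks $B_1,B_2,\ldots$. For a generic block $B$ containing an $h_n$-trap, let $W(B)$ denote the number of transitions of $X$ from the root of the first such trap to its bud. Set $C=\{B_1\text{ contains an }h_n\text{-trap}\}$ and $C'=\{B_1\text{'s first }h_n\text{-trap has height at least }h_m\}$; the condition $h_m\geq h_n$ gives $C'\subset C$, and $C'$ is precisely the event that the first $h_n$-trap of $B_1$ coincides with its first $h_m$-trap. Selecting the first block realising $C$ (respectively $C'$) and invoking i.i.d.~yields
\[
W_n\stackrel{d}{=}W(B_1)\,\big|\,C \qquad\text{and}\qquad \overline{W}_{m,n}\stackrel{d}{=}W(B_1)\,\big|\,C',
\]
so it suffices to show that these two conditional laws agree.

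The key step is a standard excursion computation at $v=Y_{K(n)}$. By the strong Markov property, at each visit to $v$ the walker selects a neighbour $u$ with probability $p_{v\to u}$ and subsequently returns to $v$ with probability $P_{u\to v}$; crucially $P_{{\bf b}(n)\to v}=1$ since the trap below ${\bf b}(n)$ is a.s.~a finite subcritical Galton--Watson tree. Therefore the return probability $\pi=\sum_u p_{v\to u}P_{u\to v}$ and the jump probability $p_{{\bf b}(n)}=\beta/(1+\beta\deg(v))$ are both functions of the backbone around $v$ and of the sub-traps attached at the \emph{other} children of $v$, and do not involve the distinguished trap. Consequently the total number of visits $N_v$ of $X$ to $v$ is $\mathrm{Geometric}(1-\pi)$ and, conditionally on $N_v$, one has $W(B_1)\sim\mathrm{Binomial}(N_v-1,\,p_{{\bf b}(n)}/\pi)$: the terminal visit is by definition a non-returning step and contributes $0$ to $W$.

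Hence the conditional law of $W(B_1)$ given all of $B_1$ stripped of the internal structure of its first $h_n$-trap is determined by $\pi$ and $p_{{\bf b}(n)}$ alone. Passing from the conditioning $H\geq h_n$ to $H\geq h_m$ only alters the conditional law of the discarded internal structure of that one trap and leaves $\pi$ and $p_{{\bf b}(n)}$ unchanged, so the law of $W(B_1)$ is unaffected. This yields $W(B_1)\,|\,C\stackrel{d}{=}W(B_1)\,|\,C'$ and proves the lemma. The only subtlety worth checking is that the location $v$ be measurable with respect to the conditioning data; this is automatic since $v$ is the first backbone vertex of $B_1$ carrying a trap of height at least $h_n$, a function of the backbone, of $Y$ and of the trap heights alone.
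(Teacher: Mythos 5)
Your proposal is correct and rests on the same key fact as the paper's proof: conditioned on exceeding $h_n$, the height of the first big trap met is independent of the backbone, the buds and the walk restricted to them, which is all that determines the number of entrances into that trap. The paper packages this as a geometric number of i.i.d.~attempts across regeneration blocks, writing $\overline{W}_{m,n}=W_n^{(G)}$ with $G$ geometric of parameter $\eta_m/\eta_n$ independent of the $W_n^{(i)}$, while you condition a single i.i.d.~block on the sub-event $C'\subset C$ whose conditional probability given the relevant data is the constant $\eta_m/\eta_n$ --- the same argument in an equivalent form (your geometric/binomial description of $W(B_1)$ is not needed for this and is only approximate under the $0$-SR conditioning, but the measurability claim it supports is what matters and is sound).
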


\begin{proof}
To reach a vertex where an $h_m$-trap is rooted, the walker has to reach a vertex where an $h_n$-trap is rooted. Two cases can occur: either the first $h_n$-trap met is also a $h_m$-trap or it is not.
In the former case, which has probability $\eta_m/\eta_n>0$, since the height of the first $h_n$-trap met is independent of the sequence $(U_i)_{i \leq K(n)}$, the random variables $\overline{W}_{m,n}$ and $W_n$ coincide.
In the latter case, by its definition, $\overline{K}(m,n)$ cannot occur before the next super-regeneration time, hence $\overline{K}(m,n)\geq  \tau_1 \circ \theta_{K(n)}$. In this case $\overline{W}_{m,n}=\overline{W}_{m,n}\circ \theta_{K(n)}$ and then by Proposition~\ref{regenstruc1}, 
\[
\overline{W}_{m,n}\circ \theta_{\tau_1 \circ \theta_{K(n)}} \stackrel{d}{=} \overline{W}_{m,n},
\]
and $\overline{W}_{m,n}\circ \theta_{\tau_1 \circ \theta_{K(n)}}$ is independent of $(U_i)_{i\leq \tau_1 \circ \theta_{K(n)}-1}$.

The scenario repeats itself until the $h_n$-trap reached is in fact a $h_m$-trap, the number of attempts necessary to reach this $h_m$-trap is a geometric random variable of parameter $\eta_m/\eta_n$ which is independent of the $(U_i)$'s. 

This means that there is a family $(W_n^{(i)})_{i\geq 1}$ of i.i.d.~random variables with the same law as $W_n$ such that 
\[
\overline{W}_{m,n}=W_n^{(G)},
\]
where $G$ is a geometric random variable independent of the $(W_n^{(i)})_{i\geq 1}$. Then, note that we have
\[
\overline{W}_{m,n}=W_n^{(G)}\stackrel{d}{=}W_n.
\]
\end{proof}

Now we need to show that $\overline{W}_{m,n}$ and $W_m$ coincide with high probability, so we introduce the event
\[
A_{m,n}=\{{\bf \ell}(m)={\bf \ell}(n) \circ \theta_{\Sigma(K(m))} \},
\]
on which clearly $\overline{W}_{m,n}$ and $W_m$  are equal.

\begin{lemma}
\begin{equation}
\label{AMN1}
\sup_{m\geq n}\PR[A_{m,n}^c| 0-\text{SR}] \to 0 \quad\text{ for } n \to \infty .
\end{equation}
\end{lemma}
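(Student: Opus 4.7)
The plan is as follows. First I would unpack what it means for $A_{m,n}^c$ to occur. On this event the first $h_n$-trap encountered by $Y$ after the super-regeneration time $\Sigma(K(m))$ differs from $\ell(m)$; but since $m\geq n$ implies $h_m\geq h_n$, the trap $\ell(m)$ is itself an $h_n$-trap, so the super-regeneration block of $Y$ containing the time index $K(m)$ must root at least two distinct $h_n$-traps at its backbone vertices. It therefore suffices to bound the probability of that coincidence.

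For each $i\geq 1$ set $V_i=\card\{Y_{\tau_{i-1}},\ldots,Y_{\tau_i-1}\}$, and let $N_n^{(i)}$ (resp.\ $N_m^{(i)}$) denote the number of $h_n$- (resp.\ $h_m$-)traps rooted at these vertices. By Proposition~\ref{regenstruc1} the triples $(V_i,N_n^{(i)},N_m^{(i)})_{i\geq 2}$ are i.i.d., and the exponential moments of $\tau_{i+1}-\tau_i$ noted after the definition of super-regeneration times give $\ES[V_i^p]<\infty$ for every $p$; the same holds for $V_1$ under $\PR[\cdot\mid 0\text{-SR}]$. With $I=\inf\{i\geq 1:\,N_m^{(i)}\geq 1\}$ the index of the block containing $K(m)$, the previous paragraph gives
\[
\PR\bigl[A_{m,n}^c\,\big|\,0\text{-SR}\bigr]\leq \PR\bigl[N_n^{(I)}\geq 2\,\big|\,0\text{-SR}\bigr].
\]
A renewal decomposition over the value of $I$, absorbing the first-block discrepancy into a multiplicative constant using the exponential moments of $V_1$, reduces this to
\[
\PR\bigl[N_n^{(I)}\geq 2\,\big|\,0\text{-SR}\bigr]\leq C\,\frac{\PR[N_n^{(2)}\geq 2,\,N_m^{(2)}\geq 1]}{\PR[N_m^{(2)}\geq 1]}.
\]

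To estimate the right-hand side, I would condition on the backbone $T_{\bf g}$ and the walk $Y$, which fixes $V_2$ and the list of distinct backbone vertices visited during the second block. By the Harris decomposition recalled in Section~\ref{sectionconstruction}, the trap structures attached to distinct backbone vertices are then conditionally independent, and Lemma~\ref{trapproba} together with the bound ${\bf f}'(q)^{h_n}\leq n^{-(1-\epsilon)}$ gives $\PR[K_x\geq h_n\mid T_{\bf g}]\leq C\,n^{-(1-\epsilon)}$ uniformly over backbone vertices $x$. A first-moment estimate yields $\PR[N_m^{(2)}\geq 1]\geq c\,\ES[V_2]\,m^{-(1-\epsilon)}$, while a pairwise union bound using the conditional independence at distinct vertices and the finiteness of $\ES[V_2^2]$ yields
\[
\PR[N_n^{(2)}\geq 2,\,N_m^{(2)}\geq 1]\leq C\,\ES[V_2^2]\,n^{-(1-\epsilon)}\,m^{-(1-\epsilon)}.
\]
Dividing, we obtain $\PR[A_{m,n}^c\mid 0\text{-SR}]\leq C'\,n^{-(1-\epsilon)}$, which is uniform in $m\geq n$ and tends to $0$.

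The main obstacle is the size-biasing introduced by the conditioning $\{N_m^{(I)}\geq 1\}$, which shifts weight towards blocks containing many backbone vertices and could a priori inflate the number of $h_n$-traps present; this is exactly why the second moment $\ES[V_2^2]$ appears in the numerator above. The finiteness of $\ES[V_2^2]$, coming from the exponential tails of $\tau_2-\tau_1$, is what closes the argument. A secondary point to handle carefully is that the same $h_n$-trap may be visited many times within a block, but since our counting is at the level of traps rooted at backbone vertices rather than visits, this creates no difficulty.
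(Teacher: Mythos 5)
Your argument follows essentially the same route as the paper: reduce $A_{m,n}^c$ to the event that the regeneration block containing the first $h_m$-trap roots at least two $h_n$-traps, pass to a generic block by the renewal structure, and bound the resulting conditional probability by $O(\eta_n)$ uniformly in $m\geq n$. The only real difference is the final estimate: where you use a second-moment union bound over pairs of big traps (which is fine here since ${\bf E}[Z^2]<\infty$ makes the number of traps per block square-integrable), the paper conditions on the total number $\card\,\text{Trap}$ of traps in the block, thins it binomially with parameter $\eta_n$, and obtains the exact bound $\varphi'(1)-\varphi'(1-\eta_n)$, which needs only the \emph{first} moment of $\card\,\text{Trap}$. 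One point to tighten: the two distinct $h_n$-traps forced by $A_{m,n}^c$ may be rooted at the \emph{same} backbone vertex (the predetermined order in the definition of $\ell(n)$ may select a different trap at $Y_{K(m)}$ than $\ell(m)$), so the union bound must run over pairs of traps rather than pairs of vertices; this same-vertex contribution is of order ${\bf E}[Z(Z-1)]\eta_n\eta_m$ and is again controlled by the standing assumption ${\bf E}[Z^2]<\infty$, so the conclusion is unaffected.
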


\begin{proof}
Let us denote, recalling (\ref{Ldef})
\[
V_j^i = \Bigl\{\card \bigcup\limits_{k=0}^{\tau_1} \{\ell\in L_{Y_k}, \ell \text{ is a $h_j$-trap}\} =i\Bigr\},
\]
and
\[
V_j^{i,+}=\Bigl\{\card \bigcup\limits_{k=0}^{\tau_1} \{\ell\in L_{Y_k}, \ell \text{ is a $h_j$-trap}\} \geq i\Bigr\}.
\]

Then we have 
\begin{equation}
\label{proba1}
\PR[A_{m,n}^c| 0-\text{SR}] \leq \PR[V_n^{2,+} | V_m^{1,+}, 0-\text{SR}].
\end{equation}

Let us denote $ \card \text{Trap} $ the number of traps seen before $\tau_1$,
\[
\card \text{Trap} = \card \Bigl\{ \ell: \ell \in \bigcup\limits_{i=0}^{\tau_1} L_{Y_i}\Bigr\},
\]
and its generating function by 
\[
\varphi(s):=\ES\Bigl[s^{\card \text{Trap}}|0-\text{SR}\Bigr].
\]

The probability of $A_{m,n}$ can be estimated with the following lemma, whose proof is deferred.

\begin{lemma}{\label{uniqueness}}
We have
\[
\forall m\geq n,\ \PR[V_n^{2,+} | V_m^{1,+}, 0-\text{SR}] \leq \varphi'(1)-\varphi'(1-\eta_n).
\]
\end{lemma}

Now we have $\ES\Bigl[\card \text{Trap} | 0-\text{SR}\Bigr]\leq \ES[\tau_1|0-\text{SR}]\ES[\card L_0]<\infty$ because of Remark~\ref{avantregen} and hence $\varphi'$ is continuous at $1$, and (\ref {AMN1}) follows from (\ref{proba1}).
\end{proof}

Applying Lemma~\ref{uniqueness}, Lemma~\ref{modif} and~(\ref{AMN1}) we get,
\begin{align*}
\PR[W_m \geq y | 0-\text{SR}]&=\PR[A_{m,n},\overline{W}_{m,n} \geq y| 0-\text{SR}] +o(m,n)\\  
                            &=\PR[\overline{W}_{m,n} \geq y| 0-\text{SR}]+o(m,n)\\
                            &=\PR[W_n \geq y | 0-\text{SR}]+o(m,n),
\end{align*}
where $\sup_{m\geq n} o(m,n)\to 0$ as $n$ goes to infinity.

The law of a random variable $W_{\infty}$ can be defined as a limit of the laws of some subsequence of $(W_m)$, 
since the family $(W_m)_{m\geq 0}$ is tight by Lemma~\ref{tailW}. Then taking $m$ to infinity along this subsequence in the preceding equation yields
\[
\forall t>0,\ \PR[W_{\infty} \leq t\mid 0-\text{SR} ] = \PR[W_n \leq t |0-\text{SR}]+o(1).
\]
This proves Proposition~\ref{Winfty}. \qed

It remains to show Lemma \ref{uniqueness}.  
\begin{proof}
Note that for $i \geq 1$,
\begin{align}
\label{bayes}
\PR[V_n^{i} | V_m^{1,+}, 0-\text{SR}] & = \frac{\PR[V_n^{i}| 0-\text{SR}]}{\PR[V_m^{1,+}| 0-\text{SR}]} \PR[V_m^{1,+}| V_n^{i}, 0-\text{SR}] \\ \nonumber
                        & \leq \frac {\PR[V_n^{i}| 0-\text{SR}]} {\PR[V_m^{1,+}| 0-\text{SR}]} i  {{\bf Q}[H\geq h_m| H \geq h_n]} \\ \nonumber
                        &= i \frac{\PR[V_n^{i}| 0-\text{SR}]}{\eta_n} \frac{\eta_m}{\PR[V_m^{1,+}| 0-\text{SR}]} \\ \nonumber
                        &\leq i \frac{\PR[V_n^{i}| 0-\text{SR}]}{\eta_n}.
\end{align}

Then we have
\begin{align*}
                                          &\sum_{i\geq 2} i \PR[V_n^{i}| 0-\text{SR}] \\
                                       =  &  \sum_{i \geq 2} \sum_{j \geq i} \PR[\card \text{Trap} =j| 0-\text{SR}] i \binom{j}{i}\eta_n^i(1-\eta_n)^{j-i}\\
                                       =  &\sum_{j \geq 0} j \PR[\card \text{Trap} =j| 0-\text{SR}]  \sum_{i = 2}^j \binom{j-1}{i-1}\eta_n^i(1-\eta_n)^{j-i} \\
                                        =  & \eta_n  \sum_{j \geq 0} j \PR[\card \text{Trap} =j| 0-\text{SR}]  \sum_{i = 1}^{j-1} \binom{j-1}{i}\eta_n^i(1-\eta_n)^{(j-1)-i} \\
                                        = & \eta_n \sum_{j \geq 0} j \PR[\card \text{Trap} = j| 0-\text{SR}] \Bigl(1-(1-\eta_n)^{j-1}\Bigr) \\
                                        = & \eta_n (\varphi'(1)-\varphi'(1-\eta_n)) .                                      
\end{align*}

Inserting this in~(\ref{bayes}) we get
\begin{align*}
\PR[V_n^{2,+}| V_m^{1,+}, 0-\text{SR}]&=\sum_{i=2}^{\infty} \PR[V_n^{i}| V_m^{1,+}, 0-\text{SR}] \\
                                     &\leq  \frac 1 {\eta_n} \sum_{i\geq 2} i \PR[V_n^{i}| 0-\text{SR}]= \varphi'(1)-\varphi'(1-\eta_n),
\end{align*}
which concludes the proof of Lemma~\ref{uniqueness}.
\end{proof}

We will need the following lower bound for the random variable $W_\infty$.
\begin{lemma}
\label{minorWinfty}
There exists a constant $c_W>0$ depending only on $(p_i)_{i\geq 0}$, such that
\[
\PR[W_{\infty}\geq 1] \geq c_W.
\]
\end{lemma}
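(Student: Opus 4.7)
The plan is to reduce the claim to a uniform lower bound on $\PR[W_n \geq 1 \mid 0\text{-SR}]$ via Proposition~\ref{Winfty} and then to produce such a bound by a one-step analysis. Since $W_n \xrightarrow{d} W_\infty$ and all of these random variables take values in $\N$, weak convergence of integer-valued laws yields $\PR[W_n = 0 \mid 0\text{-SR}] \to \PR[W_\infty = 0]$, so $\PR[W_\infty \geq 1] = \lim_n \PR[W_n \geq 1 \mid 0\text{-SR}]$. It therefore suffices to exhibit $c_W > 0$ with $\PR[W_n \geq 1 \mid 0\text{-SR}] \geq c_W$ for every $n$.

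Write $x := Y_{K(n)}$, $y := {\bf b}(n)$, and let $d_x$ denote the total number of offspring of $x$ in $T$. The walker's first visit to $x$ occurs at time $\sigma_{K(n)}$ (approaching $x$ from its backbone parent), and by the Markov property the step taken at that moment lands on $y$ with probability exactly $\beta/(1+\beta d_x)$, in which case $W_n \geq 1$ by the very definition~(\ref{notationW}). Thus
\[
\PR[W_n \geq 1 \mid 0\text{-SR}] \;\geq\; \ES\Bigl[\tfrac{\beta}{1+\beta d_x} \Bigm| 0\text{-SR}\Bigr] \;\geq\; \frac{\beta}{1+\beta\,\ES[d_x \mid 0\text{-SR}]},
\]
the second inequality being Jensen's, applied to the convex function $t \mapsto \beta/(1+\beta t)$. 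This reduces the task to showing that $\ES[d_x \mid 0\text{-SR}]$ is bounded uniformly in $n$.

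To control this expectation, I would exploit two independence facts: by Remark~\ref{avantregen} the event $0\text{-SR}$ depends only on the sequence $(U_i)$, and by the Harris decomposition the bud structure at each backbone vertex is independent of the backbone itself (hence of $Y$). The conditional law of $d_x$ therefore coincides with that of the total degree of a generic backbone vertex, biased by requiring at least one attached $h_n$-trap. Writing $d$ for such a generic degree and using the crude union bound ${\bf P}[K \geq h_n \mid d = k] \leq k\eta_n$ over the at most $k$ buds,
\[
\ES[d \mid K \geq h_n] \;\leq\; \frac{\eta_n}{{\bf P}[K \geq h_n]}\,{\bf E}[d^2].
\]
By Lemmas~\ref{tailH} and~\ref{trapproba} the prefactor is $O(1)$, while $d$ is distributed as $Z$ conditioned on $Z^\ast \geq 1$, so ${\bf E}[d^2] \leq {\bf E}[Z^2]/(1-q) < \infty$ by hypothesis. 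Setting $c_W := \beta/(1+\beta C)$ with $C$ the resulting bound concludes the argument.

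The one point worth handling carefully, and the main obstacle I foresee, is verifying that picking the \emph{first} backbone vertex hit by $Y$ with $K_\cdot \geq h_n$ introduces no further bias on the backbone degree of $x$ beyond the pure bud-size-biasing above. This is where the independence between buds, backbone and $(U_i)$ enters decisively: $K_x$ is a function of the buds of $x$ only, so the selection "$x = Y_{K(n)}$" can be realized by sampling the walk on the backbone and then, independently and vertex by vertex along the walk, the bud structure, which makes the size-biased description rigorous.
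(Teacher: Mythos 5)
Your argument is correct and shares the paper's skeleton: reduce to a uniform lower bound on $\PR[W_n\geq 1\mid 0\text{-SR}]$ via Proposition~\ref{Winfty}, analyse the walk at its first arrival at $x=Y_{K(n)}$, and identify the law of the offspring number of $Y_{K(n)}$ as that of a generic vertex conditioned to carry an $h_n$-trap. Both halves are, however, executed differently. For the walk, the paper witnesses $\{W_n\geq 1\}$ by a longer event (step into the bud, return, step to a backbone descendant, super-regenerate there), each factor bounded below by $1/(Z(K(n))+1)$ or $p_\infty$; you use only the first step into ${\bf b}(n)$ and justify that its probability is still $\beta/(1+\beta d_x)$ under the conditioning by the independence of the bud-excursion randomness from the $(U_i)$ (Remarks~\ref{avantregen} and~\ref{regenstruc2}) — this is legitimate and arguably cleaner. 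For the environment, the paper bounds $\ES[(Z(K(n))+1)^{-2}]$ from below by exhibiting an atom at some $j_0$ with $p_{j_0}>0$, using only the first moment of $Z$, whereas you bound $\ES[Z(K(n))]$ from above by the size-biasing inequality ${\bf E}[Z\1{K_0\geq h_n}]\leq \eta_n{\bf E}[Z^2]$ combined with Lemmas~\ref{tailH} and~\ref{trapproba}, and then apply Jensen; this consumes the standing hypothesis ${\bf E}[Z^2]<\infty$, which the paper's version of this lemma does not need. Two small points: your final constant $\beta/(1+\beta C)$ nominally depends on $\beta$, but since $\beta>1$ it is bounded below by $1/(1+C)$ with $C$ depending only on $(p_k)$, which is what the application in Theorem~\ref{non-conve} requires; and the delicate selection issue you flag at the end (that picking the first big-trap vertex along the $Y$-walk biases its law only through the trap structure) is exactly the assertion the paper itself makes without further proof, so you are on equal footing there.
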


\begin{proof}
By Proposition~\ref{Winfty}, it is enough to show the lower bound for all $W_n$. First let us notice that
\begin{equation}
\label{firststepminorWinfty}
\PR[W_{n}\geq 1\mid 0-\text{SR}] \geq \ES\left[\left(\frac 1 {Z(K(n))+1}\right)^2 p_{\infty}\right]\geq 
\left(1-{\bf f}'(q)\right)\ES\Bigl[\frac{1}{\left(Z(K(n))+1\right)^2}\Bigl],
\end{equation}
where $Z(K(n))$ is the number of offspring of $Y_{K(n)}$. To show (\ref{firststepminorWinfty}), note that the particle has probability at least 
$\beta/(\beta Z(K(n))+1)\geq 1/(Z(K(n))+1)$ of going from $Y_{K(n)}$ to ${\bf b}(n)$ and when it comes back to $Y_{K(n)}$ again there is probability at least $1/(Z(K(n))+1)$ to go from $Y_{K(n)}$ to one of its descendants on the backbone and then there is a probability of at least $p_{\infty}$ that a super-regeneration occurs. The event we just described is in $\{W_n\geq 1\}\cap \{0-\text{SR}\}$. For the second inequality in (\ref{firststepminorWinfty}), use $\beta > \beta_c = {\bf f}'(q)^{-1}$ hence $p_\infty = 1-\beta^{-1} \geq 1-{\bf f}'(q)$.

Now, we notice that the law of the $Z(K(n))$ is that of $Z_1$ conditioned on the event $\{\text{an } h_n-\text{trap is rooted at } 0\}$. Denote $j_0$ the smallest index such that $j_0>1$ and $p_{j_0}>0$ (which exists since ${\bf m}>1$) and $Z_1^*$ the number of descendants of $0$ with an infinite line of descent. All $Z_1-Z_1^*$ traps rooted at  $0$ have, independently of each other, probability $\eta_n$ of being $h_n$-traps, so that
\begin{align*}
\PR[Z(K(n))=j_0] & ={\bf P}[Z_1=j_0 \mid \text{an } h_n-\text{trap} \text{ is rooted at } 0] \\
                & =\frac{{\bf P}[Z_1=j_0,\ \text{Bin}(Z_1-Z_1^*,\eta_n) \geq 1] }{ {\bf P}[\text{an } h_n-\text{trap} \text{ is rooted at } 0]}\\
                & \geq \frac{\eta_n{\bf P}[Z_1=j_0,Z_1-Z_1^*\geq 1] }{ {\bf P}[\text{an } h_n-\text{trap} \text{ is rooted at } 0]}.
\end{align*}
Further,  since $P[\text{Bin}(Z_1-Z_1^*,\eta_n) \geq 1] \leq Z_1 \eta_n$, we have
\[
{\bf P}[\text{an } h_n-\text{trap} \text{ is rooted at } 0]\leq \sum_{j=0}^{\infty} {\bf P}[Z_1-Z_1^*=j] j \eta_n \leq {\bf m} \eta_n.
\]

Putting these equations together, we get that
\[
\PR[Z(K(n))=j_0] \geq \frac{{\bf P}[Z_1=j_0,Z_1-Z_1^*\geq 1]}{{\bf m}}.
\]

The last equation and~(\ref{firststepminorWinfty}) yield a lower bound for $\PR[W_{\infty}\geq 1]$ which depends only on $(p_k)_{k\geq 0}$.
\end{proof}

\section{The time spent in different traps is asymptotically independent}{\label{indep}}

In order to show the asymptotic independence of the time spent in different big traps we shall use super-regeneration times. First we show that 
the probability that there is an $h_n$-trap in the first super-regeneration block goes to $0$ for $n \to \infty$.

Define 
\begin{itemize}
\item[(i)]$B_1(n)=\{\forall i\in [1,n],\ \card \{Y_{\tau_i},\ldots ,Y_{\tau_{i+1}} \}
\leq n^{\epsilon}\}$,
\item[(ii)]$B_2(n)= \{ \forall i \in [0, \tau_1], \ \card L_{Y_i} \leq n^{2\epsilon} \}$,
\item[(iii)]$B_3(n)=\{ \forall i \in [0,\tau_1],\ \forall \ell\in L_{Y_i},\ \ell \text{
is not an $h_n$-trap} \}$,
\item[(iv)]$B_4(n)=\{ \forall i\in [2,n],\ \card \bigl\{j:
j\in[\tau_i,\tau_{i+1}], \  L_{Y_j}   \text{ contains an $h_n$-trap}\bigr\} \leq 1\}$,
\item[(v)]$B(n)=B_1(n)\cap B_2(n)\cap B_3(n)\cap B_4(n)$.
\end{itemize}

\begin{lemma}{\label{B1}}
For $\epsilon<1/4$, we have
\[
\PR[B_1(n)^c]=o(n^{-2}) \text{ and } \PR[B(n)^c] \to 0.
\]
\end{lemma}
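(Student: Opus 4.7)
I would handle the four events separately, using the i.i.d.~structure of super-regeneration blocks (Proposition~\ref{regenstruc1}) together with the exponential moment $\ES[a^{\tau_2-\tau_1}]<\infty$ noted just after the definition of super-regeneration times.

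\textbf{Bound on $B_1(n)^c$.} Since the cardinality of the vertex set visited in a block is dominated by $\tau_{i+1}-\tau_i$, and the blocks are i.i.d.~with exponential moments, a Chernoff-type estimate gives
\[
\PR[\tau_{i+1}-\tau_i > n^{\epsilon}] \leq a^{-n^{\epsilon}}\ES[a^{\tau_2-\tau_1}] \leq C e^{-c n^{\epsilon}},
\]
and similarly for $\tau_1$. A union bound over $i \in [1,n]$ yields $\PR[B_1(n)^c]\leq n C e^{-cn^\epsilon} = o(n^{-2})$. This proves the first assertion of the lemma.

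\textbf{Bound on $B_2(n)^c$.} On the event $\{\tau_1 \leq n^\epsilon\}$, which by the previous step has probability $1-o(1)$, there are at most $n^\epsilon$ vertices $Y_i$ to consider. For any vertex $x$ of the backbone, $\card L_x$ is dominated by the total number of children $Z$ at $x$ (which for $x=0$ is unconditioned and for $x\neq 0$ is biased toward higher values, but still has a finite second moment via the Harris decomposition, as it is dominated by $Z$ under ${\bf P}$). Using ${\bf E}[Z^2]<\infty$ and Markov's inequality,
\[
\PR[\card L_{Y_i} > n^{2\epsilon}] \leq \frac{{\bf E}[Z^2]}{n^{4\epsilon}},
\]
and a union bound gives $\PR[B_2(n)^c] \leq o(1) + n^\epsilon \cdot C n^{-4\epsilon} = o(1)$.

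\textbf{Bound on $B_3(n)^c$.} On $B_1(n)\cap B_2(n)$, the total number of traps rooted at vertices $Y_0,\ldots,Y_{\tau_1}$ is at most $n^\epsilon \cdot n^{2\epsilon}=n^{3\epsilon}$. Given the backbone, each trap is independently an $h_n$-trap with probability $\eta_n \sim \alpha {\bf f}'(q)^{h_n} = O(n^{-(1-\epsilon)})$ by \eqref{notationetan}. A union bound yields
\[
\PR[B_3(n)^c \cap B_1(n) \cap B_2(n)] \leq n^{3\epsilon} \eta_n \leq C n^{4\epsilon - 1},
\]
which tends to $0$ precisely because $\epsilon<1/4$. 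This is the step forcing the hypothesis on $\epsilon$ and is the main obstacle in the proof.

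\textbf{Bound on $B_4(n)^c$.} By Proposition~\ref{regenstruc1}, for $i\geq 1$ the blocks are i.i.d.~copies of the first one (conditioned on $0\text{-SR}$). Let $N$ denote the number of traps seen in one block, so $N\leq \sum_{i=\tau_1}^{\tau_2-1}\card L_{Y_i}$. Using exponential moments of $\tau_2-\tau_1$ and ${\bf E}[Z^2]<\infty$ (via Cauchy--Schwarz after conditioning on the backbone), one checks $\ES[N^2]<\infty$. Given $N$, the probability that at least two traps among these are $h_n$-traps is at most $\binom{N}{2}\eta_n^2$, hence
\[
\PR[\text{block contains }\geq 2\ h_n\text{-traps}] \leq \ES[N^2]\,\eta_n^2 = O(n^{-2(1-\epsilon)}).
\]
A union bound over the $n$ blocks gives $\PR[B_4(n)^c] = O(n^{-1+2\epsilon}) = o(1)$ for $\epsilon<1/2$. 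Combining the four estimates yields $\PR[B(n)^c]\to 0$. \qed
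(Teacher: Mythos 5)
Your proposal follows essentially the same route as the paper's proof: exponential moments of $\tau_2-\tau_1$ plus a union bound for $B_1(n)$, a moment bound on $\card L_{Y_i}$ for $B_2(n)$, the union bound $n^{3\epsilon}\eta_n=O(n^{4\epsilon-1})$ for $B_3(n)$, and a pair-counting argument for $B_4(n)$. The differences (a second instead of a first moment for $B_2(n)$; counting pairs of traps instead of pairs of vertices for $B_4(n)$) are cosmetic, except at one point.

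The one step I would not accept as written is the claim $\ES[N^2]<\infty$ in your treatment of $B_4(n)$. In the Harris decomposition the number of traps at a backbone vertex $x$ is drawn with a law depending on $\deg_{T_{{\bf g}}}(x)$, and the walk (hence the set of visited vertices and the block length) is itself a function of these backbone degrees; so $N$ is a sum of a random number of summands that are neither independent of the number of summands nor uniformly square-integrable given the backbone (for instance $\ES[(\card L_x)^2\mid \deg_{T_{{\bf g}}}(x)=j]$ need not be bounded in $j$). Your parenthetical ``via Cauchy--Schwarz'' would, carried out naively, ask for fourth moments of $Z$, which are not assumed. The paper sidesteps this entirely: it bounds the probability that two \emph{distinct visited vertices} in a block each carry an $h_n$-trap, using that at each newly visited vertex the offspring are drawn afresh (Remark~\ref{avantregen} and the argument of Lemma~\ref{A1}), so that on $B_1(n)$ this probability is at most $n^{2\epsilon}{\bf P}[K_0\geq h_n]^2$ per block; summing over the at most $n$ blocks gives $O(n^{4\epsilon-1})$, again needing only $\epsilon<1/4$. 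I recommend replacing your $\ES[N^2]$ step by this vertex-pair union bound; note also that the event that a single vertex carries two $h_n$-traps is not part of $B_4(n)$ --- it is handled separately in $C_3(n)$ --- so counting vertices rather than traps loses nothing.
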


\begin{proof}
Since $\tau_2- \tau_1$ (resp. $\tau_1$) has some positive exponential moments and
$B_1(n)^c \subseteq \cup_{i=1}^n\{\tau_{i+1} - \tau_i \geq n^{\epsilon}\}$,
\[
\PR[B_1(n)^c] =o(n^{-2}).
\]

Using the fact that the number of traps at different vertices has the same law,
\[
\PR[B_2(n)^c] \leq \PR[B_1(n)^c]+n^{\epsilon} {\bf P}[ \card L_0 \geq n^{2\epsilon}] \leq
o(1)+n^{-\epsilon} \frac{{\bf m}}{1-q}=o(1),
\]
where we used Chebyshev's inequality and ${\bf E}[\card L_0] \leq {\bf E}[Z_1] \leq {\bf
m}/(1-q)$.

Then we have
\[
\PR[B_3(n)^c]\leq \PR[B_2(n)^c] + n^{3\epsilon} \eta_n =o(1),
\]
yielding the result using~(\ref{notationetan}),  since $\epsilon< 1/4$.

Finally, up to time $n$ we have at most $n$ super-regeneration blocks, on $B_1(n)$ they
contain at most $n^{\epsilon}$ visited vertices. But the probability that among the
$n^{\epsilon}$ first visited vertices after a super-regeneration time, two of them are adjacent to a
big trap is bounded above by $n^{2\epsilon}{\bf P}[K_0\geq h_n]^2$ (here we implicitly used Remark \ref{avantregen}). Hence, we get
\[
\PR[B_4(n)^c] \leq \PR[B_1(n)^c]+n n^{2\epsilon}(C n^{\epsilon-1})^2 =O(n^{4\epsilon-1}),
\]
yielding the result for $\epsilon<1/4$.
\end{proof}

We define $R(n)=\card\{Y_1,\ldots,Y_{\Delta_n^Y}\}$ and 
$l_n$ the number of vertices where an $h_n$-trap is rooted: 
\begin{equation}\label{ellendef}
l_n=\card\Bigl\{ i \in [0,\Delta_n^Y]:  L_{Y_i} \text{ contains an $h_n$-trap}\Bigr\}.
\end{equation}
Recall~(\ref{notationrho}) and define
\begin{equation}
\label{C1def}
C_1(n)=\{ (1-n^{-1/4})\rho n \leq R(n) \leq (1+n^{-1/4})\rho n\}
\end{equation}
\begin{equation} 
\label{C2def}
\displaystyle{C_2(n)=\Bigl\{ (1-n^{-\epsilon/4})\rho C_a  n{\bf f}'(q)^{h_n} \leq l_n \leq (1+n^{-\epsilon/4})\rho C_a n{\bf f}'(q)^{h_n}\Bigr\}}
\end{equation}
\begin{equation}
\label{C3def}
C_3(n)=\displaystyle{\Bigl\{ \forall 1\leq i\leq \Delta_n^Y ,\ \card\Bigl\{ \ell\in L_{Y_i} : \ell\text{ is an $h_n$-trap}\Bigr\}\leq 1\Bigr\}}
\end{equation}
and $C(n)=C_1(n) \cap C_2(n) \cap C_3(n)$.
\begin{lemma}{\label{C1}}
For $\epsilon <1/4$, we have
\[
\PR[C(n)^c] \to 0.
\]
\end{lemma}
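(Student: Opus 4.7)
The plan is to establish each of $\PR[C_1(n)^c]$, $\PR[C_2(n)^c]$, $\PR[C_3(n)^c] \to 0$ separately and combine them by a union bound. All three bounds rest on the super-regeneration decomposition of Proposition \ref{regenstruc1}, which makes the contributions of distinct regeneration blocks i.i.d., together with Chebyshev's inequality and the tail estimate of Lemma \ref{trapproba}.

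For $C_1(n)$, I would set $V_i = \card\{Y_{\tau_i}, \ldots, Y_{\tau_{i+1}-1}\}$ and $M_i = |Y_{\tau_{i+1}}| - |Y_{\tau_i}|$, so that $R(n) = \sum_{i=1}^{N_n} V_i + O(V_0)$, where $N_n$ is the number of super-regeneration times before $\Delta_n^Y$. Both $V_i$ and $M_i$ are bounded by $\tau_{i+1}-\tau_i$, which has finite exponential moments. A Cram\'er-type large deviation bound then gives $|N_n - n/\ES[M_1]| \leq n^{3/4}$ and $|\sum_{i=1}^k V_i - k\ES[V_1]| \leq k^{3/4}$ with probability $1-o(1)$, and combining them yields $|R(n) - \rho n| = O(n^{3/4})$ on a good event, so $\PR[C_1(n)^c] \to 0$ (with $\rho = \ES[V_1]/\ES[M_1]$, consistent with~(\ref{notationrho})). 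For $C_2(n)$, I condition on the backbone and on $(Y_i)_{i\leq \Delta_n^Y}$: the traps at the $R(n)$ distinct visited backbone vertices are then i.i.d., so $l_n$ is conditionally Binomial$(R(n), p_n)$ with $p_n = {\bf P}[K_0 \geq h_n] \sim C_a {\bf f}'(q)^{h_n}$. On $C_1(n)$ the conditional mean is $\rho C_a n {\bf f}'(q)^{h_n}(1+o(1)) \sim \rho C_a n^{\epsilon}$ and the conditional variance is bounded by the mean, so Chebyshev with deviation $\rho C_a n^{-\epsilon/4}\cdot n {\bf f}'(q)^{h_n} \sim c n^{3\epsilon/4}$ gives $\PR[C_2(n)^c \cap C_1(n)] = O(n^{-\epsilon/2}) \to 0$. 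Finally, for $C_3(n)$, at each visited backbone vertex $v$ the number of $h_n$-traps is, conditionally on $\card L_v$, Binomial$(\card L_v, \eta_n)$, so the probability of having $\geq 2$ big traps rooted there is at most $\tfrac{1}{2}(\card L_v)^2 \eta_n^2$. Summing over the $O(n)$ distinct visited backbone vertices and using $\ES[(\card L_0)^2] \leq \ES[Z^2] < \infty$, the expected number of offending vertices is $O(n \eta_n^2) = O(n^{2\epsilon-1}) \to 0$ since $\epsilon < 1/4$.

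The main technical point is the concentration of $R(n)$: the i.i.d.~sum is indexed by the random $N_n$, so one must jointly control $N_n$ and the partial sums $\sum_{i=1}^k V_i$. The standard remedy is to bound each by Cram\'er separately, intersect the resulting good events, and read off the concentration of $R(n)$ on the intersection. A secondary point in $C_2(n)$ is that the $o(1)$ error in Lemma \ref{trapproba} must be absorbed into the Chebyshev fluctuation, which is automatic for $n$ large since the latter dominates.
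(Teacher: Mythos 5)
Your proposal is correct and follows the same three-event decomposition as the paper; the differences lie in the sub-arguments. For $C_1(n)$ the paper does not invoke Cram\'er bounds: it sandwiches $R(n)$ between $\sum_{i=1}^{n_0-1}Z_i$ and $\sum_{i=1}^{n_0}Z_i$, where $Z_i=\card\{Y_{\tau_i+1},\dots,Y_{\tau_{i+1}}\}$ is stochastically dominated by i.i.d.~geometric variables and $n_0\le n$, and then applies a single Chebyshev inequality at scale $n^{-1/4}$; your renewal plus large-deviation route is heavier but gives stronger (superpolynomial) error bounds. For $C_2(n)$ the two arguments coincide: conditional Binomial law for $l_n$ plus a second-moment bound. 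For $C_3(n)$ you take a direct union bound over all $O(n)$ visited backbone vertices using $\ES[(\card L_0)^2]<\infty$, whereas the paper first restricts to the $O(n^{\epsilon})$ vertices carrying at least one big trap (i.e.~works on $C_2(n)$) and bounds the conditional probability of a second big trap by ${\bf f}'(1)-{\bf f}'(1-\eta_n)\le C\eta_n$ as in Lemma \ref{uniqueness}; both routes yield $O(n^{2\epsilon-1})$ and both ultimately rest on ${\bf E}[Z^2]<\infty$.

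Two quantitative points need more care. First, the target in $C_1(n)$ is $|R(n)-\rho n|\le\rho n^{3/4}$, so an unqualified $O(n^{3/4})$ error, whose implied constant accumulates $\ES[V_1]$ and the constants of your two Cram\'er events, need not fit inside the window; you should take the deviations to be $\delta n^{3/4}$ for a small $\delta$ (or simply $n^{0.7}$), which the exponential moments of $\tau_2-\tau_1$ permit at no cost. Second, your claim that the $o(1)$ error in Lemma \ref{trapproba} is ``automatically'' absorbed by the Chebyshev fluctuation is not right as stated: the window in $C_2(n)$ is $\pm n^{-\epsilon/4}$ \emph{relative}, so you need the relative error in ${\bf P}[K_0\ge h_n]\sim C_a{\bf f}'(q)^{h_n}$ to be $o(n^{-\epsilon/4})$, not merely $o(1)$. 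This does hold (the correction in the Heathcote--Seneta--Vere-Jones asymptotics behind Lemma \ref{tailH} is geometric under the standing moment assumption), and the paper is equally silent on the point, but ``the latter dominates'' is an assertion, not an argument.
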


\begin{proof}
First, we notice that for $i\geq 0$ and with the convention $\tau_0:=0$ we have
\[
Z_i : = \card\{Y_{\tau_i+1},\ldots,Y_{\tau_{i+1}}\}\preceq G^{(i)}(p_{\infty}),
\]
where the geometric random variables $G^{(i)}$ are i.i.d. Indeed, at each new vertex visited we have probability at least $p_{\infty}$ to have a super-regeneration time. Let us denote by $n_0$ the smallest integer such that $\Delta_n^Y \leq \tau_{n_0}$, which satisfies $n_0\leq n$ since $\abs{X_{\tau_{i+1}}}-\abs{X_{\tau_i}}\geq 1$. \\
Now, since the random variables $Z_i$ are i.i.d. and
$\sum\limits_{i=1}^{n_0 -1} Z_i \leq \card \{ Y_1, \ldots, Y_{\Delta_n^Y}\} \leq \sum\limits_{i=1}^{n_0} Z_i $,
we have
\begin{align*}
&\PR\Bigl[\Bigl\vert\frac{\card \{ Y_1, \ldots, Y_{\Delta_n^Y}\}} n - \rho\Bigr\vert\geq n^{-1/4}\Bigr] \\
\leq & n^{1/2} \text{Var}\Bigl(\frac{\card \{ Y_1, \ldots, Y_{\Delta_n^Y}\}} n\Bigr) \\
= & n^{1/2}\left( \ES\Bigl[\Bigl(\frac{\card \{ Y_1, \ldots, Y_{\Delta_n^Y}\}} n\Bigr)^2\Bigr]-\ES\Bigl[\frac{\card \{ Y_1, \ldots, Y_{\Delta_n^Y}\}} n\Bigr]^2 \right)\\
\leq & n^{-3/2} \left( \ES\Bigl[\Bigl(\sum\limits_{i=1}^{n_0} Z_i\Bigr)^2 \Bigr]-\ES\Bigl[\sum\limits_{i=1}^{n_0-1} Z_i \Bigr]^2\right) \\
\leq & n^{-1/2} (E[G(p_\infty)^2]+E[G(p_{\infty})]^2),
\end{align*}
yielding $\PR[C_1(n)^c] \to 0$.

On $C_1(n)$ we know that there are $R(n)\in[\rho n (1-n^{-1/4}) ,\rho n (1+n^{-1/4})]$ vertices where we have independent trials to have $h_n$-traps.
Hence $l_n$ has the law $\text{Bin}(R(n),{\bf P}[K_0\geq h_n] ) $, where the success probability satisfies ${\bf P}[K_0\geq h_n] \leq C n^{\epsilon-1}$ has asymptotics given by Lemma~\ref{trapproba}. Now, standard estimates for Binomial distributions imply that $\PR[C_2(n)^c \cap C_1(n)] \to 0$.

%

On $C_2(n)$, there are at most $C n^{\epsilon}$ vertices where (at least) one $h_n$-trap can be rooted, we only need to prove that, with probability going to $1$, those vertices do not contain more than two $h_n$-traps. Using the same reasoning as in Lemma~\ref{uniqueness} we get
\begin{align*}
& {\bf P}[\text{0 has at least two $h_n$-traps}|\text{0 has at least one $h_n$-trap}, 0-\text{ SR}]\\& \leq {\bf f}'(1)-{\bf f}'(1-\eta_n) \leq C \eta_n,
\end{align*}
where we used that ${\bf E}[Z^2]<\infty$, which implies that ${\bf f}''(1)<\infty$.

The result follows from the fact that $\eta_n=o(n^{-\epsilon})$ for $\epsilon<1/4$.
\end{proof}

Let us denote, recalling (\ref{notationH}),
\[
D(n)=\Bigl\{ \max_{\displaystyle{\ell\in \cup_{i=0, \ldots, \Delta_n^Y}L_{Y_i}}} H(\ell) \leq \frac {2\ln n} {- \ln {\bf f}'(q)}\Bigr\}.
\]

\begin{lemma}
\label{D1}
We have 
\[
\PR[D(n)^c] \to 0.
\]
\end{lemma}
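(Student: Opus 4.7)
The plan is to combine the a priori bound on the number of traps seen along the backbone from Lemma~\ref{A1} with the tail estimate for the height of a single trap from (\ref{tailmaj}), and then apply a union bound. Write $h = \lceil \frac{2\ln n}{-\ln {\bf f}'(q)} \rceil$, so that ${\bf f}'(q)^{h} \leq n^{-2}$.

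First I would restrict to the event $A(n)$ from Lemma~\ref{A1}, which has probability tending to $1$. On $A_2(n)$, the total number of traps rooted at vertices of the backbone visited up to time $\Delta_n^Y$ satisfies $\card \bigcup_{i=1}^{\Delta_n^Y} L_{Y_i} \leq C_2 n$. Conditionally on the backbone, each such trap is an independent ${\bf h}$-Galton-Watson tree (with the extra edge attached), so by (\ref{tailmaj}) we have ${\bf Q}[H \geq h] \leq {\bf f}'(q)^h \leq n^{-2}$ for each one.

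Next I would apply a union bound: the conditional probability (given the backbone, restricted to $A_2(n)$) that at least one of these at most $C_2 n$ traps has height exceeding $h$ is bounded by $C_2 n \cdot n^{-2} = C_2 n^{-1}$. Combining this with $\PR[A(n)^c] = o(1)$ yields
\[
\PR[D(n)^c] \leq \PR[A(n)^c] + C_2 n^{-1} \longrightarrow 0,
\]
which is the desired conclusion.

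No step looks like a serious obstacle here: the bound on the number of traps is already packaged in Lemma~\ref{A1}, the single-trap tail bound is the elementary inequality (\ref{tailmaj}), and the union bound is immediate once one notes that the traps attached at distinct backbone vertices are independent ${\bf h}$-Galton-Watson trees. The only mild subtlety is keeping track of the fact that the index set $\bigcup_{i=1}^{\Delta_n^Y} L_{Y_i}$ is random; this is handled cleanly by conditioning on the backbone path (equivalently, on $\sigma(Y_k : k \geq 0)$ together with $A_2(n)$) before applying the union bound.
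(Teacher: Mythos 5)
Your proposal is correct and is essentially identical to the paper's own argument: the paper likewise restricts to $A_2(n)$ (so at most $C_2 n$ traps), applies the tail bound ${\bf Q}[H \geq \frac{2\ln n}{-\ln {\bf f}'(q)}] \leq n^{-2}$ from (\ref{tailmaj}), and concludes by a union bound that $\PR[D(n)^c] \leq o(1) + C_2 n^{-1}$. Your additional remark about conditioning on the backbone to justify the union bound over the random index set is a correct and harmless elaboration of the same step.
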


\begin{proof}
Due to~(\ref{tailmaj}), we know that ${\bf Q}[H \geq\frac {2\ln n} {-\ln {\bf f}'(q)}] \leq n^{-2}$, so using Lemma~\ref{A1}
\[
\PR[D(n)^c] \leq \PR[A_2(n)^c]+\PR[A_2(n)\cap D(n)^c] \leq o(1)+C_2 n^{-1}=o(1),
\]
which concludes the proof.
\end{proof}

On $B(n)$ there is no big trap in the first super-regeneration block, on $B(n)\cap C(n)$ all big traps are met in distinct super-regeneration blocks and $C_2(n)$ tells us the asymptotic number of such blocks. Moreover on $D(n)$, we know that to cross level $n$ on a trap, it has to be rooted after level $n-(- 2 \ln n/\ln {\bf f}'(q))$. Hence using Lemma~\ref{B1}, Lemma~\ref{C1}, Lemma~\ref{D1}, Proposition~\ref{regenstruc1} and Remark~\ref{regenstruc2}, we get

\begin{proposition}
\label{approx_sum_iid}
Let $\chi_i(n), i\geq 1$, be i.i.d.~copies of $\chi_0(n)$, see (\ref{chi0def}), and \\
${\widetilde n} =n-(- 2 \ln n/\ln {\bf f}'(q))$. Then we have
\[
o_1(n) + \sum_{i=1}^{ (1-{\widetilde n}^{-\epsilon/4})\rho_n C_a  {\widetilde n}{\bf f}'(q)^{h_{{\widetilde n}}}}\chi_i(n) \leq \chi(n) \leq \sum_{i=1}^{ (1+n^{-\epsilon/4})\rho_n C_a  n{\bf f}'(q)^{h_n}}\chi_i(n) + o_2(n),
\]
where $\lim_{n \to \infty}o_1(n) =\lim_{n\to \infty} o_2(n) =0$.
\end{proposition}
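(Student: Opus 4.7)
The plan is to leverage the super-regeneration structure of $(Y_i)$ to realize $\chi(n)$ as a sum of i.i.d.\ copies of $\chi_0(n)$, up to boundary corrections. I work on the good event $A(n) := B(n) \cap C(n) \cap D(n)$, which satisfies $\PR[A(n)^c] \to 0$ by Lemmas~\ref{B1},~\ref{C1},~\ref{D1}. On $A(n)$: the first regeneration block contains no big trap; each subsequent block carries at most one big trap; the count $l_n$ of blocks meeting a big trap before $Y$-time $\Delta_n^Y$ satisfies $N_-(n) \leq l_n \leq N_+(n)$, where $N_\pm(n)$ are the two summation bounds in the statement (the lower bound $N_-(n)$ coming from $C_2$ applied at level $\widetilde n$); and every trap has height at most $2\ln n/(-\ln{\bf f}'(q))$, so any big trap rooted at a backbone vertex of level $\leq \widetilde n$ lies entirely below level $n$.

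Enumerate the blocks carrying a big trap as $j_1 < \cdots < j_{l_n}$, and let $T_r$ denote the total time the walker spends in the big trap of block $j_r$ (summed over all visits; by super-regeneration these are all confined to the $X$-time window corresponding to block $j_r$, and no re-entry occurs afterwards). By Proposition~\ref{regenstruc1} the regeneration blocks are i.i.d., and Remark~\ref{regenstruc2} ensures that within one block the trap geometry and the in-trap excursions are independent of the super-regeneration event; hence $(T_r)_{r \geq 1}$ are i.i.d.\ with common law $\chi_0(n)$. Extending $(T_r)$ to a genuine i.i.d.\ sequence $(\chi_i(n))_{i \geq 1}$ on an enlarged probability space, the upper bound follows at once on $A(n)$:
\[
\chi(n) \leq \sum_{r=1}^{l_n} T_r \leq \sum_{i=1}^{N_+(n)} \chi_i(n),
\]
with the residual contribution from $A(n)^c$ absorbed into $o_2(n)$.

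For the lower bound, let $\tau_{k^*}$ be the first super-regeneration with $|Y_{\tau_{k^*}}| \geq n$. For each block $j < k^* - 1$ that carries a big trap, the block ends at $Y$-time $\tau_{j+1} \leq \tau_{k^*-1} < \Delta_n^Y$, and hence before $\Delta_n$ in $X$-time, so the full $T_r$ is captured in $\chi(n)$. On $A(n)$ combined with $C_2$ at level $\widetilde n$, the number of such ``interior'' big traps is at least $N_-(n) - 1$ (subtracting one possible boundary big trap in block $k^*-1$), giving
\[
\chi(n) \geq \sum_{i=1}^{N_-(n)-1} \chi_i(n) = \sum_{i=1}^{N_-(n)} \chi_i(n) - \chi_{N_-(n)}(n),
\]
and the single missing copy $\chi_{N_-(n)}(n)$, of typical size $n^{(1-\epsilon)/\gamma} = o(n^{1/\gamma})$, is absorbed into $o_1(n)$ together with the $A(n)^c$ contribution. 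The main obstacle is justifying that $(T_r)$ are i.i.d.\ copies of $\chi_0(n)$: one must carefully decouple the event ``this block contains a big trap'' (which depends on the backbone geometry and the attached traps) from the in-trap time, which is the content of combining Proposition~\ref{regenstruc1} and Remark~\ref{regenstruc2} block by block. A secondary technical point is that $o_1, o_2$ are best read on the scale $n^{1/\gamma}$, as $o_P(n^{1/\gamma})$ random quantities, which matches the downstream use alongside Theorem~\ref{sum_iid}.
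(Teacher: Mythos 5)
Your proof follows the paper's own (very terse) argument step for step: the paper justifies this proposition exactly by combining Lemmas \ref{B1}, \ref{C1} and \ref{D1} with Proposition \ref{regenstruc1} and Remark \ref{regenstruc2}, which is precisely your good event $B(n)\cap C(n)\cap D(n)$ together with the block-by-block identification of the total trap times as i.i.d.\ copies of $\chi_0(n)$. The one place you concede something is the lower bound: dropping the copy $\chi_{N_-(n)}(n)$ forces you to read $o_1(n)$ as an $o_P(n^{1/\gamma})$ quantity rather than one tending to zero, which suffices for the use made of the proposition in Subsection \ref{subseq} but is strictly weaker than the statement. That loss is avoidable: on $B_1(n)\cap B_2(n)$ the single regeneration block straddling level $n$ contains at most $n^{3\epsilon}$ traps, each big with probability $\eta_n\sim\alpha\,{\bf f}'(q)^{h_n}$, so with probability tending to one it carries no big trap at all and every trap counted via $C_2$ lies in a completed block whose full time is captured by $\chi(n)$.
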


In the light of Proposition~\ref{cuts}, our problem reduces to understanding the convergence in law of a sum of i.i.d.~random variables. The aim of the next section is to reduce $\chi_1(n)$ to a specific type of random variable for which limit laws can be derived (see Section~\ref{sect_sum_iid}).

\section{The time is spent at the bottom of the traps}{\label{bottom}}
We denote by $\ell_j(n)$ the 
$j$-th $h_n$-trap seen by the walker, and by
$\delta_j(n)$ (resp. $root_j(n)$, ${\bf b}_j(n)$) the leftmost bottom point (the root, the bud) of  
$\ell_j(n)$. Further, $\chi_j(n)$ denotes the time spent in the $j$-th $h_n$-trap met. Then, $\chi_i(n), i\geq 1$, are i.i.d. copies of $\chi_0(n)$ as in Proposition \ref{approx_sum_iid}.

We want to show that the time spent in the big traps is essentially spent at the bottom of them, i.e.~during excursions from the the bottom leftmost 
point $\delta$. In order to prove our claim, we introduce
\[
\chi_j^*(n)=\card \{k \geq 0 : \ X_k \in \ell_j(n), k\geq T_{\delta_j(n)}, T_{\delta_j(n)}\circ \theta_k<\infty\},
\]
the time spent during excursions from the bottom in the $j$-th $h_n$-trap met. It is obvious that  
\[
\chi_j(n) \geq \chi^*_j(n).
\]

We prove that 
\begin{proposition}
\label{th_neglectall}
For $\epsilon<1/4$, we have, recalling (\ref{ellendef})
\[
\text{for all $t>0$,} \qquad \PR\left[ \frac 1{n^{1/\gamma}}\Bigl\vert\sum_{j=1}^{l_n} \left( \chi_j(n)-\chi_j^*(n)\right) \Bigr\vert \geq t \right] \to 0.
\]
\end{proposition}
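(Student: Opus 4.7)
The plan is to decompose, for each big trap $\ell_j(n)$, the wasted time $\chi_j(n)-\chi_j^*(n)$ into a sum over visits of $R_i^<+R_i^>$ (time before first visiting $\delta_j(n)$ plus time after last leaving $\delta_j(n)$), bound $\ES_{\bf Q}[R_i^<+R_i^>]$ uniformly by $O(\log n)$, sum over the $O(\log n)$ visits per trap and the $O(n^\epsilon)$ big traps, and conclude by Markov's inequality since $n^\epsilon\log^2 n = o(n^{1/\gamma})$.

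First, I work on the good event $\mathcal{E}_n := C(n)\cap D(n) \cap \{\forall j \leq l_n,\ W_n^{(j)} \leq C\log n\}$, which has probability tending to one by Lemmas~\ref{C1}, \ref{D1}, and~\ref{tailW} (the latter via a union bound over the at most $Cn^\epsilon$ big traps). On $\mathcal{E}_n$, $l_n \leq Cn^\epsilon$, every visited big trap has height at most $\overline H = O(\log n)$, and the number of visits $W_n^{(j)}$ to each big trap satisfies $W_n^{(j)} \leq C\log n$. For the $i$-th visit to $\ell_j(n)$, write the visit duration as $R_i = R_i^<+R_i^*+R_i^>$, where $R_i^*$ is the time spent in excursions from $\delta_j(n)$ returning to $\delta_j(n)$ and $R_i^<$ equals the whole visit if $\delta_j(n)$ is not reached; then
\[
\chi_j(n)-\chi_j^*(n) \;=\; \sum_{i=1}^{W_n^{(j)}}\bigl(R_i^<+R_i^>\bigr).
\]

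The core step is to prove $\ES_{\bf Q}[R_i^<+R_i^>] \leq C(H_j+1)$ uniformly in $n,i,j$ and in the conditioning $H(\ell_j(n))\geq h_n$. A gambler's-ruin estimate on the spine of $\ell_j(n)$ shows that the probability of a given visit reaching $\delta_j(n)$ converges as $H_j\to\infty$ to the constant $p_\infty = 1-\beta^{-1}$. For a visit that does not reach $\delta_j(n)$, an $h$-transform makes the spine walk biased back toward root$(\ell_j(n))$, and combining Lemma~\ref{meanreturntime} on the subnetwork above $\delta_j(n)$ with Lemma~\ref{L} bounds the expected duration in ${\bf Q}$-expectation by a finite constant depending only on $\beta$ and $(p_k)_{k\geq 0}$. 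For a visit reaching $\delta_j(n)$, $R_i^<$ equals the hitting time of $\delta_j(n)$ from root$(\ell_j(n))$: electrical-network bookkeeping decomposes it into $O(H_j)$ spine steps and excursions into the subtraps attached to each spine vertex, with total expectation controlled by $\ES_{\bf Q}[\sum_i \Lambda_i/\beta^i] \leq C\sum_i {\bf f}'(q)^i < \infty$ via Lemma~\ref{L}. By reversibility of the biased walk viewed as an electrical network, $R_i^>$ admits the same bound.

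Combining the two steps,
\[
\ES\Bigl[\sum_{j=1}^{l_n}(\chi_j(n)-\chi_j^*(n))\,\mathbf{1}_{\mathcal{E}_n}\Bigr] \;\leq\; Cn^\epsilon\,\log^2 n \;=\; o(n^{1/\gamma}),
\]
using $\epsilon<1/4<1/\gamma$ (since $\gamma<1$). Markov's inequality together with $\PR[\mathcal{E}_n^c]\to 0$ yields the proposition.

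\textbf{Main obstacle.} The delicate step is the uniform-in-$H_j$ bound $\ES_{\bf Q}[R_i^<+R_i^>] = O(H_j)$. Although a visit that reaches $\delta_j(n)$ typically spends time of order $\beta^{H_j}$ in the trap (precisely the scale of $\chi_j^*$), the pre- and post-$\delta$ portions are only of order $H_j$ because the walker is biased down the spine and the subtrap contributions are summable in $i$: each visit to a spine vertex at level $i$ contributes expected subtrap-excursion time of order $\Lambda_i/\beta^i$, and $\sum_i \beta^{-i}E_{\bf Q}[\Lambda_i] \leq \sum_i {\bf f}'(q)^i = 1/(1-{\bf f}'(q))$ is finite thanks to Lemma~\ref{L} and the subcriticality of the ${\bf h}$-Galton-Watson trap trees. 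The reversibility argument for $R_i^>$ and the $h$-transform used for non-reaching visits are the other technical ingredients to be carried out carefully.
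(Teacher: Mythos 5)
Your overall architecture matches the paper's: split each visit to a big trap into the portion before first hitting $\delta$ plus the portion after last leaving $\delta$ (handling separately the visits that never reach $\delta$), control the expected wasted time per visit via $h$-processes and electrical-network formulas, multiply by the number of visits and the number of big traps, and finish with Markov's inequality on a good event. However, your key quantitative claim — $\ES_{\bf Q}[R_i^<+R_i^>]\leq C(H_j+1)=O(\log n)$ — is false, and the justification you give for it reveals the error. The summability of $\sum_i \beta^{-i}E_{\bf Q}[\Lambda_i]\leq C\sum_i {\bf f}'(q)^i$ is the correct accounting for an \emph{excursion from $\delta$}, where the invariant measure gives the walk only $O(\beta^{-i})$ expected visits to the spine vertex at height $i$ above $\delta$; that is precisely why $S_\infty$ and hence $\chi_j^*$ behave well. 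On the trip from $root$ down to $\delta$, by contrast, the walk visits \emph{every} spine level $\Theta(1)$ times, so the expected time lost in the subtraps attached at level $i$ is of order $E_{\bf Q}[\Lambda_i]\leq C(\beta{\bf f}'(q))^i$ with no $\beta^{-i}$ damping. Since $\beta{\bf f}'(q)>1$ in the sub-ballistic regime, this is dominated by the levels near the top and is of order $(\beta{\bf f}'(q))^{h_n}=n^{(1-\epsilon)(1/\gamma-1)}$ — polynomially large in $n$, not logarithmic. This is exactly what the paper's Lemmas \ref{neglectup} and \ref{neglectdown} produce: the expected wasted time per excursion is bounded by $C(\ln n)\,n^{(1-\epsilon)(1/\gamma-1)+\epsilon}$ (the extra $n^\epsilon$ coming from the number of subtraps on $\widetilde A(n)$).

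The good news is that the proposition survives the corrected bound: $n^\epsilon$ big traps, $O(1)$ expected entries each, and $C(\ln n)n^{(1-\epsilon)(1/\gamma-1)+\epsilon}$ expected waste per entry give, after dividing by $n^{1/\gamma}$, a quantity of order $(\ln n)\,n^{2\epsilon-1-\epsilon(1/\gamma-1)}\to 0$ for $\epsilon<1/4$. But your proof as written does not establish this; it rests on the incorrect $O(H_j)$ estimate, and the margin you invoke ($n^\epsilon\log^2 n=o(n^{1/\gamma})$) is far larger than what is actually available. You need to redo the core step: restrict to an event like $\widetilde A(n)$ controlling the height, the number of subtraps, and their heights, bound each subtrap excursion in expectation by $C(\beta{\bf f}'(q))^{h_n}$ using Lemma~\ref{meanreturntime} together with Lemma~\ref{Z}, and then check that the resulting polynomial bound still loses to $n^{1/\gamma}$.
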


In order to prove the preceding proposition, we mainly need to understand $\chi_1(n)$ and $\chi_1^*(n)$. Note that $\chi_1(n)$ is a sum of $W_n$ successive i.i.d.~times spent in $\ell_1(n)$ and $\chi_1^*(n)$ is a sum of $W_n$ successive i.i.d.~times spent during excursions from the bottom of $\ell_1(n)$. We can rewrite the proposition as follows
\begin{equation}
\label{goal_th_neglectall}
\text{for all $t>0$,} \qquad \PR\Bigl[ \frac 1 {n^{1/\gamma}} \Bigl\vert\sum_{i=1}^{l_n} 
\sum_{j=1}^{W_n^{(i)}} (T_{root_i(n)}^j-T_{root_i(n)}^{*,j})\Bigr\vert \geq t \Bigr] \to 0
\end{equation}
where 
\[
T_{root_i(n)}^j=\card \{k\geq 0: \ X_k\in \ell_i(n),\ \card\{\widetilde{k}\leq k:\ X_{{\widetilde k}+1}={\bf b}_i(n),\ X_{\widetilde k}=root_i(n)\}=j\},
\]
and
\begin{align*}
T_{root_i(n)}^{*,j} =\card \{ k \geq 0: \ X_k\in \ell_i(n),\ \card\{{\widetilde k} \leq k: \ X_{{\widetilde k} +1} ={\bf b}_i(n),\ X_{\widetilde k} =root_i(n)\}=j, \\
k\geq T_{\delta_j(n)}, T_{\delta_j(n)}\circ \theta_k<\infty\},
\end{align*}
and $(W_n^{(i)}), i \geq 1$ are i.i.d.~copies of $W_n$.

Consequently, in this section we mainly investigate the walk on a big trap, which is a random walk in a finite random environment. Recall that $root$ is the vertex $Y_{K(n)}$ on the backbone where ${\bf \ell}(n)$ is attached. Moreover set ${\bf Q}_n[ \cdot ] = {\bf Q}[\cdot \mid H \geq h_n]$, 
$E_{{\bf Q}_n}[ \cdot] = E_{\bf Q}[ \cdot \mid   H \geq h_n]$, $E^{\omega}[\cdot] :=E_{root}^{\omega}[\cdot ]$ and $\ES_{{\bf Q}_n}[\cdot] = E_{{\bf Q}_n}[E^{\omega}[\cdot]]$.

\begin{remark}{\label{switchnotation}}
To ease notations, we add to all these probability spaces an independent random variable $W_n$ whose law is given by (\ref{notationW}),  under the law $\PR[\cdot|0-\text{SR}]$ for $n\in \N \cup \{ \infty \}$.
\end{remark}

We will extensively use the description of Section~\ref{trapcons}, in particular we recall that a trap is composed of $root$ which is linked by an edge to an ${\bf h}$-Galton-Watson tree.

We want to specify what ${\bf \ell}(n)$ looks like. Denoting 
\[
h_n^+=\Bigl\lceil \frac{(1+\epsilon) \ln n}{-\ln {\bf f}'(q)}\Bigr\rceil,
\]
consider
\begin{itemize}
\item[(i)]$\widetilde A_1(n)=\{H\leq h_n^+\}$,
\item[(ii)] $\widetilde A_2(n)=\{\text{there are fewer than $n^{\epsilon}$ subtraps}\}$, 
\item[(iii)]$\widetilde A_3(n)=\{\text{all subtraps of ${\bf \ell}(n)$ have  height $\leq h_n$}\}$,
\item[(iv)] $\widetilde A(n)=\widetilde A_1(n) \cap \widetilde A_2(n) \cap \widetilde A_3(n)$.
\end{itemize}

\begin{lemma}{\label{negA}}
For $\epsilon<1/4$, we have 
\[
{\bf Q}_n[\widetilde A(n)^c]=o(n^{-\epsilon}).
\]
\end{lemma}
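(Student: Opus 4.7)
The plan is to bound each term of the union
\[
{\bf Q}_n[\widetilde A(n)^c] \le {\bf Q}_n[\widetilde A_1(n)^c] + {\bf Q}_n[\widetilde A_1(n) \cap \widetilde A_2(n)^c] + {\bf Q}_n[\widetilde A_1(n) \cap \widetilde A_3(n)^c]
\]
separately. The first is immediate from Lemma~\ref{tailH}: since $h_n^+ - h_n \sim 2\epsilon \ln n /(-\ln {\bf f}'(q))$, one has
\[
{\bf Q}_n[H > h_n^+] = \frac{{\bf Q}[H > h_n^+]}{{\bf Q}[H \ge h_n]} \sim {\bf f}'(q)^{h_n^+ - h_n} = O(n^{-2\epsilon}) = o(n^{-\epsilon}).
\]
I will freely use below that ${\bf Q}[H \ge h_n] \ge c\,n^{\epsilon - 1}$, from the same lemma.

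For $\widetilde A_2(n)^c$ I would use the enlarged probability space of Section~\ref{trapcons} on which $H$ is independent of $(\phi_i,\psi_i)_{i \ge 1}$ and the trap equals $\mathcal{T}_H$. The total number of subtraps in $\mathcal{T}_H$ is $\sum_{i=1}^H(\psi_i - 1) \le \sum_{i=1}^{h_n^+}\psi_i$ on $\widetilde A_1(n)$. Since Lemma~\ref{W} yields the geometric-type tail ${\bf Q}[\psi_i = k] \le C_\psi k q^k$, the $\psi_i$'s have uniform exponential moments, so a Chernoff bound gives
\[
{\bf Q}\Bigl[\sum_{i=1}^{h_n^+}\psi_i \ge n^\epsilon\Bigr] \le e^{-c n^\epsilon}
\]
for $n$ large. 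Dividing by the lower bound ${\bf Q}[H \ge h_n] \ge c\,n^{\epsilon-1}$ still leaves a super-polynomially small quantity, hence $o(n^{-\epsilon})$.

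For $\widetilde A_3(n)^c$ the crucial observation, from the Geiger construction, is that any subtrap attached at the $\ell$-th ancestor of $\delta$ is conditioned to have height strictly less than $\ell$ (or $\ell - 1$), so bad subtraps (height $\ge h_n + 1$) can only occur at levels $\ell \ge h_n + 2$. On $\widetilde A_1(n)$ there are at most $h_n^+ - h_n - 1 = O(\ln n)$ such levels; at each one, $E_{\bf Q}[\psi_\ell] \le \widetilde C_\psi$ (Lemma~\ref{W}), and the conditional probability for an individual subtrap to be bad is bounded by
\[
\frac{{\bf Q}[K \ge h_n + 1]}{{\bf Q}[K < \ell]} = O({\bf f}'(q)^{h_n}) = O(n^{\epsilon - 1}),
\]
using Lemma~\ref{tailH} together with the fact that ${\bf Q}[K < \ell] \to 1$ uniformly for $\ell \ge h_n + 2$. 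Summing over levels and applying Markov gives ${\bf Q}_n[\widetilde A_1(n) \cap \widetilde A_3(n)^c] = O(\ln n \cdot n^{\epsilon - 1}) = o(n^{-\epsilon})$ whenever $\epsilon < 1/2$ (in particular for $\epsilon < 1/4$).

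The only step that requires real care is this last one: one must correctly track the level-dependent conditioning of each subtrap in order to extract the factor ${\bf f}'(q)^{h_n}$, and verify that the conditioning denominator ${\bf Q}[K < \ell]$ is bounded below. Everything else reduces to routine applications of Lemmas~\ref{tailH} and~\ref{W} combined with Markov or Chernoff bounds.
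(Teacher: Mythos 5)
Your proof is correct, and it follows the paper's overall scheme: the same three-way decomposition, the same use of Lemma~\ref{tailH} for $\widetilde A_1(n)$, and the same reliance on the tail of $\psi_i$ from Lemma~\ref{W}. The two places where you deviate are both fine and worth noting. For $\widetilde A_2(n)$ the paper does not use a Chernoff bound on $\sum\psi_i$; it observes that on $\widetilde A_1(n)$ more than $n^\epsilon$ subtraps forces some single spine vertex to carry at least $n^\epsilon/h_n^+$ of them, and then union-bounds over the $h_n^+$ vertices using ${\bf Q}[\psi_i=k]\leq C_\psi kq^k$ — either route gives a super-polynomially small bound. For $\widetilde A_3(n)$ the paper is cruder than you: it works on $\widetilde A_2(n)$, counts at most $n^\epsilon$ subtraps each of which is tall with probability $O(\eta_n)=O(n^{\epsilon-1})$, and gets $n^{2\epsilon-1}=o(n^{-\epsilon})$ for $\epsilon<1/3$; you instead exploit the level-dependent conditioning in the Geiger construction to localize the bad subtraps to the $O(\ln n)$ levels above $h_n$, getting the sharper bound $O(\ln n\cdot n^{\epsilon-1})$ valid for $\epsilon<1/2$. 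Your observation that the conditioning denominator ${\bf Q}[H<\ell]$ is bounded below (by ${\bf Q}[H=0]=q_0>0$) is exactly the point the paper glosses over when it writes $\eta_n$ for the probability that a \emph{conditioned} subtrap is tall, so your extra care there is warranted rather than wasted.
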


\begin{proof}
First 
\[
{\bf Q}_n[\widetilde A_1(n)^c]\leq \frac{{\bf Q} [H\geq h_n^+]}{{\bf Q}[H\geq h_n]} \leq C n^{-2\epsilon} = o(n^{-\epsilon}).
\]

Furthermore using Lemma~\ref{W}, Lemma~\ref{Z} and~(\ref{tailmaj}), we get
\begin{align*}
    & {\bf Q}_n[\widetilde A_2(n)^c] \\
\leq & {\bf Q}_n[\widetilde A_1(n)^c] + {\bf Q}_n[\widetilde A_1(n), \text{there are $n^{\epsilon}/h_n^+$ subtraps on a vertex of the spine}] \\
\leq & o(n^{-\epsilon}) +h_n^+ C_{\psi}\frac{n^{\epsilon}}{h_n^+}q^{n^{\epsilon}/h_n^+} =o(n^{-\epsilon}).
\end{align*}

Finally 
\begin{align*}
    & {\bf Q}_n[\widetilde A_3(n)^c]\\
\leq & {\bf Q}_n[\widetilde A_2(n)^c] + {\bf Q}_n[\widetilde A_2(n), \text{ there exists a subtrap of height $\geq h_n$}] \\
\leq & o(n^{-\epsilon}) + n^{\epsilon} \eta_n =o(n^{-\epsilon}),        
\end{align*}
where we used (\ref{notationetan}) and $\epsilon<1/4$.    
\end{proof}

Using Chebyshev's inequality we get, recalling (\ref{C2def}),
\begin{align*}
&\PR\Bigl[ \frac 1 {n^{1/\gamma}}\Bigl\vert\sum_{i=1}^{l_n} \sum_{j=1}^{W_n^{(i)}} (T_{root_i(n)}^j-T_{root_i(n)}^{*,j})\Bigr\vert \geq t \Bigr]\\
                                 & \leq \frac 1 {tn^{1/ \gamma}}\ES\Bigl[\1{C_2(n)}\1{\widetilde A(n)}\sum_{i=1}^{l(n)} \sum_{j=1}^{W_n^{(i)}} (T_{root_i(n)}^j-T_{root_i(n)}^{*,j})\Bigr] + \PR[C_2(n)^c] +{\bf Q}_n[\widetilde A(n)^c] \\
                                 & \leq \PR[C_2(n)^c] + {\bf Q}_n[\widetilde A(n)^c]+\frac {2\rho C_a c_\beta n^{\epsilon}} {tn^{1/\gamma}} \ES[\1{\widetilde A(n)} (T_{root_1(n)}^1-T_{root_1(n)}^{*,1})],
\end{align*}

where $c_\beta =  E[G(p_{\infty}/3)]$, implying $\ES[W_n^{(j)}] \leq c_\beta$. Hence using Lemma~\ref{negA} and Lemma~\ref{C1},
with
$$
a_n : = \PR\Bigl[ \frac 1 {n^{1/\gamma}} \Bigl\vert\sum_{i=1}^{l_n} \sum_{j=1}^{W_n^{(i)}} (T_{root_i(n)}^j-T_{root_i(n)}^{*,j})\Bigr\vert \geq t \Bigr] 
$$
and 
$$
b_n : = 
\frac C t n^{\epsilon-1/\gamma}E_{{\bf Q}_n}[\1{\widetilde A(n)} (T_{root_1(n)}^1-T_{root_1(n)}^{*,1})]\, ,
$$
we have
\begin{equation}
\label{goal2_th_neglectall}
\limsup\limits_{ n\to \infty} \left(a_n - b_n\right) \leq 0\, .
\end{equation}

We have to estimate this last expectation. Consider an $h_n$-trap. Each time the walker enters the $h_n$-trap two cases can occur: either the walker will reach $\delta$, or he will not reach $\delta$ before he comes back to $root$. In the former case, $T_{root}^+-T_{root}^{*,+}$ is the time spent going from $root$ to 
$\delta$ for the first time plus the time coming back from $\delta$ to $root$ for the last time (starting from $\delta$ and going back to $root$ without returning to $\delta$). In the latter case, $T_{root}^+-T_{root}^{*,+}$ equals $T_{root}^+$. This yields the following upper bound
\begin{align}
\label{needneglectall}
& \ES[\1{\widetilde A(n)}(T_{root}^+-T_{root}^{*,+})]\\
\leq & E_{{\bf Q}_n}[\1{\widetilde A(n)}E_{root}^{\omega}[\1{\widetilde A(n)}T_{\delta}^+\mid T_{\delta}^+<T_{root}^+]]+  E_{{\bf Q}_n}[\1{\widetilde A(n)}E_{\delta}^{\omega}[T_{root}^+\mid T_{root}^+<T_{\delta}^+]] \nonumber \\
                       \nonumber & \qquad \qquad+  E_{{\bf Q}_n}[\1{\widetilde A(n)}E_{root}^{\omega}[T_{root}^+\mid T_{root}^+<T_{\delta}^+]].
\end{align}

To tackle the conditionings that appear, we shall use $h-$processes, see~\cite{ESZ2} and~\cite{Zeitouni} for further references. For a given environment $\omega$ let us denote $h^{\omega}$ the following function on the trap: $h^{\omega}(z)=P_z^{\omega}[T_{root}^+ < T_{\delta}^+]$,  with $h^{\omega}(root)=1$ and $h^{\omega}(\delta)=0$. Then we have the following formula for the transition probabilities of the conditioned Markov chain

\begin{equation}{\label{hproc}}P^{\omega}_y[X_1=z|T_{root}^+ < T_{\delta}]=\frac{h^{\omega}(z)}{h^{\omega}(y)}P^{\omega}_y[X_1=z],\end{equation}
for $y,z$ in the trap. 
Due to the Markov property, $h^{\omega}$ is harmonic except on $\delta$ and $root$. It can be computed easily here (note that this is a 
one-dimensional calculation involving only the spine):
\[
h^{\omega}(y)=h^{\omega}(y\wedge \delta)=\beta^{-d(root,y\wedge \delta)}\frac{1-\beta^{-(H+1-d(root,y\wedge \delta))}}{1-\beta^{-(H+1)}}.
\]

In particular, comparing the walk conditioned on the event $\{T_{\delta}^+>T_{root}^+\}$ to the original walk, we have the following:
\begin{enumerate}
\item the walk remains unchanged on the subtraps,
\item for $y$ on the spine and $z$ a descendant of $y$ not on the spine, we have 
\[
P^{\omega}_y[X_1=\overleftarrow{y} |T_{root}^+< T_{\delta}]>P^{\omega}_y[X_1 =z |T_{root}^+ < T_{\delta}],
\]
\item for $y\notin \{\delta,root\}$ on the spine, we have 
\[
P^{\omega}_y[X_1=\overleftarrow{y} |T_{root}^+ < T_{\delta}]>\beta P^{\omega}_y[X_1=\overrightarrow{y} |T_{root}^+ < T_{\delta}].\]
\end{enumerate}

The points $(2)$ and $(3)$ state respectively that the conditioned walk is more likely to go towards $root$ than to go to a given vertex of a subtrap and that restricted to the spine the conditioned walk is more than $\beta$-drifted towards $root$.

\begin{lemma}{\label{neglectup}}
For $z \in \{\delta,root\}$, we have
\[
E_{{\bf Q}_n} [\1{\widetilde A(n)}E_{z}^{\omega}[T_{root}^+\mid T_{root}^+<T_{\delta}^+]]\leq  C (\ln n) n^{(1-\epsilon)(1/\gamma -1)+\epsilon}.
\]
\end{lemma}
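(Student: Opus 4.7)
The plan is to bound the conditional expected return time via the Doob $h$-transform description in~(\ref{hproc}), exploiting the three consequences stated just after it: inside each subtrap the walk is unchanged, at each spine vertex the probability of entering any given subtrap is at most the probability of stepping toward $root$, and on the spine the conditioned walk is $\beta$-biased toward $root$.

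For $z=root$ I would first observe that $P_{root}^\omega[T_{root}^+<T_\delta^+]\ge c_\beta$ for some positive constant $c_\beta$ depending only on $\beta$, by reducing to the effective spine walk and applying gambler's ruin. The conditioning then costs only a constant factor, and it suffices to bound $E_{root}^\omega[T_{root}^+\1{T_{root}^+<T_\delta^+}]$. For $z=\delta$ the conditional probability is of order $\beta^{-H}$ and hence very small, but I would work directly with the Doob-transformed Markov chain, whose expected hitting time of $root$ from $\delta$ is exactly the conditional expectation in the statement.

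In either case I would decompose the expected hitting time via the Green's function of the Doob chain into a spine contribution and a subtrap contribution. The drift given by property~(3) yields $G^{\mathrm{sp}}(\delta,y_k)=O(1)$ uniformly in $k$, so $\sum_k G^{\mathrm{sp}}(\delta,y_k)=O(H)=O(\ln n)$ on $\widetilde A_1(n)$, while $G^{\mathrm{sp}}(root,y_k)=O(\beta^{-(k-1)})$ gives total spine contribution $O(1)$. For each subtrap $s$ rooted at spine vertex $y_k$, Lemma~\ref{meanreturntime} applied to the restricted walk on $s\cup\{y_k\}$ shows that the mean excursion time is bounded by $2W_s/\beta^k$, where $W_s$ is the subtrap's total conductance; this coincides, up to constants, with the normalized subtrap weight that enters~$\Lambda_{H-k}$.

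On $\widetilde A(n)$ there are at most $n^\epsilon$ subtraps (by $\widetilde A_2(n)$), each of height at most $h_n$ (by $\widetilde A_3(n)$); the computation in the proof of Lemma~\ref{L} shows that a subtrap conditioned to have height at most $h_n$ has expected normalized weight bounded by $C(\beta{\bf f}'(q))^{h_n}$. Combining these bounds and averaging under ${\bf Q}_n$ gives
\[
E_{{\bf Q}_n}\bigl[\1{\widetilde A(n)}\,E_z^\omega[T_{root}^+\mid T_{root}^+<T_\delta^+]\bigr]\le C\ln n + Cn^\epsilon(\beta{\bf f}'(q))^{h_n}\le C\ln n\cdot n^{(1-\epsilon)(1/\gamma-1)+\epsilon},
\]
since $(\beta{\bf f}'(q))^{h_n}=n^{(1-\epsilon)(1/\gamma-1)}$. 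The hard part will be a careful verification that the spine Green's function of the Doob chain remains $O(1)$ per spine vertex despite the subtrap excursions that break the one-dimensional structure, together with a bookkeeping of normalization constants between $W_s$, the $\Pi$'s, and $\Lambda_i$ when invoking Lemma~\ref{L}.
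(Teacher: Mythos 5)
Your proposal is correct and follows essentially the same route as the paper: decompose the conditional hitting time into a spine part and a subtrap part under the Doob $h$-transform, control the expected occupation of each spine vertex (your Green's-function bound $G^{\mathrm{sp}}(\cdot,y_k)=O(1)$ is exactly the paper's stochastic domination of $N(y)$ by a geometric via property (3)), control the number of entries into each subtrap via property (2), and bound each subtrap excursion by Lemma~\ref{meanreturntime} together with the estimate $E_{{\bf Q}_n}[\Pi^i_j]\leq C(\beta{\bf f}'(q))^{i}$ and the counts from $\widetilde A(n)$. The only point to make fully explicit in the final combination is that the expected number of excursions into each fixed subtrap is $O(1)$ for the conditioned walk (you state the needed ingredient in your opening paragraph), after which the bookkeeping matches the paper's.
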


\begin{proof}
First let us show that the walk cannot visit too often a vertex of the spine. Indeed let $y$ be a vertex of the spine, using fact $(3)$, we have 
$P^{\omega}_y[T_y^+>T_{root}^+|T_{root}^+< T_{\delta}] \geq p_{\infty}$. Hence the random variable $N(y)=\card \{ n\leq T^+_{root}:\ X_n=y \}$ with $(X_n)$ conditioned on $\{T_{root}^+ < T_{\delta}\}$ is stochastically dominated by $G(p_{\infty})$, a geometric random variable with parameter $p_\infty$.

Furthermore, we cannot visit often a given subtrap $s(y)\in S_y$ (recall (\ref{Sdef})). Indeed, if we denote the number of visits to $s(y)$ by $N(s(y))=\card \{ n \leq T^+_{root}:\ X_n=y,\ X_{n+1} \in s(y) \}$, using Fact $(2)$ and a reasoning similar to the one for the asymptotics on $A_3(n)$ in Lemma~\ref{A1} we have that $N(s(y))$ with $(X_n)$ conditioned on $\{T_{root}^+< T_{\delta}\}$ is stochastically dominated by $G(p_{\infty}/2)$.

Let us now consider the following decomposition
\[
T_{root}^+= T_{spine}+ \sum_{s \in \text{subtraps}} \sum_{j=1}^{N(s)} R_{s}^{j},
\]
where $T_{spine}=\card\{n\leq T^+_{root} : X_n \text{ is in the spine}\}=\sum_{x\in \text{spine}} N(x)$, and $R_s^{j}$ is the time spent in the subtrap $s$ during the $j$-th excursion in it. Moreover, on $\widetilde A(n)$, the law of any subtrap $s$ is that of a Galton-Watson tree conditioned to have height strictly less than $i+1$ for some $i\leq h_n$. Then Lemma~\ref{meanreturntime} implies that for such a subtrap, $E^{\omega}[R_{s}^{j}]$ has the same law as $2\Pi_{i+1}^{i,1,1}$ which satisfies using Lemma~\ref{Z} that $E_{{\bf Q}_n}[\Pi_j^i]\leq  C (\beta {\bf f}'(q))^i$. Moreover, on $\widetilde A(n)$, there are at most $h_n^+$ vertices in the spine and at most 
$n^\epsilon$ subtraps, hence
\begin{align*}
E_{{\bf Q}_n} [\1{\widetilde A(n)}E_{\delta}^{\omega}[T_{root}^+\mid T_{root}^+<T_{\delta}^+]] &\leq h_n^+ E[G(p_{\infty})]\\& \qquad+h_n^+ n^{\epsilon} E[G(p_{\infty}/2)]  C (\beta {\bf f}'(q))^{h_n},
\end{align*}
and using $(\beta {\bf f}'(q))^{h_n} \leq C n^{(1-\epsilon)(1/\gamma-1)}$ we get
\[
E_{{\bf Q}_n}[\1{\widetilde A(n)}E_{\delta}^{\omega}[T_{root}^+\mid T_{root}^+<T_{\delta}^+]]\leq  C (\ln n) n^{(1-\epsilon)(1/\gamma -1)+\epsilon}.
\]
\end{proof}

The previous proof is mainly based on the three statements preceding the statement of Lemma~\ref{neglectup}. Similarly, one can show the following
\begin{lemma}{\label{neglectdown}}
For $z \in \{\delta,root\}$, we have
\[
E_{{\bf Q}_n}[\1{\widetilde A(n)}E_{z}^{\omega}[T_{\delta}^+\mid T_{\delta}^+<T_{root}^+]]\leq  C (\ln n) n^{(1-\epsilon)(1/\gamma -1)+\epsilon}.
\]
\end{lemma}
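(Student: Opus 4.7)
The plan is to imitate the proof of Lemma~\ref{neglectup}, replacing the $h$-process that conditions on $\{T_{root}^+<T_\delta^+\}$ by the one associated with $\{T_\delta^+<T_{root}^+\}$; the combinatorial and analytic estimates are then recycled verbatim.

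I would introduce the harmonic function $h^\omega(y)=P_y^\omega[T_\delta^+<T_{root}^+]$, with the conventions $h^\omega(\delta)=1$ and $h^\omega(root)=0$. Any subtrap $s$ is a finite tree whose only connection to the rest of the trap is through its spine attachment vertex $y$, so the walk started in $s$ must return to $y$ before reaching either $\delta$ or $root$; hence $h^\omega$ is constant on $s$ and equals $h^\omega(y)$ there. Integrating out the subtrap excursions, the walk seen only on the spine is a nearest-neighbor $\beta$-biased walk on $\{0,\ldots,H+1\}$, so
\[
h^\omega(y)=\frac{1-\beta^{-d(root,y)}}{1-\beta^{-(H+1)}}\qquad\text{for $y$ on the spine.}
\]
The transform formula~(\ref{hproc}) applied to this $h^\omega$ then yields the three analogues of the properties listed after~(\ref{hproc}): (1) the conditioned walk is unchanged on the subtraps; (2) for $y$ on the spine and $z$ a descendant of $y$ not on the spine,
\[
P_y^\omega[X_1=\overrightarrow{y}\mid T_\delta^+<T_{root}^+]>P_y^\omega[X_1=z\mid T_\delta^+<T_{root}^+],
\]
which follows from $h^\omega(\overrightarrow{y})/h^\omega(y)>1=h^\omega(z)/h^\omega(y)$; and (3) for $y\notin\{\delta,root\}$ on the spine,
\[
P_y^\omega[X_1=\overrightarrow{y}\mid T_\delta^+<T_{root}^+]>\beta\,P_y^\omega[X_1=\overleftarrow{y}\mid T_\delta^+<T_{root}^+],
\]
which reduces to the elementary inequality $(1-\beta^{-(y+1)})/(1-\beta^{-(y-1)})>1$.

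With these three facts in hand, I would repeat the counting argument of Lemma~\ref{neglectup} with the roles of $root$ and $\delta$ interchanged. Property~(3) gives $P_y^\omega[T_y^+>T_\delta^+\mid T_\delta^+<T_{root}^+]\geq p_\infty$, so the number of visits $N(y)$ to any spine vertex before hitting $\delta$ is stochastically dominated by $G(p_\infty)$; property~(2) combined with the escape estimate of Lemma~\ref{A1} gives $N(s)\preceq G(p_\infty/2)$ for every subtrap $s$. Writing
\[
T_\delta^+=\sum_{y\in\text{spine}}N(y)+\sum_{s\in\text{subtraps}}\sum_{j=1}^{N(s)}R_s^{j},
\]
applying Lemma~\ref{meanreturntime} to bound $E^\omega[R_s^j]$ by $2\Pi_j^i$ together with the moment estimate $E_{{\bf Q}_n}[\Pi_j^i]\leq C(\beta{\bf f}'(q))^i$ from Lemma~\ref{L}, and bounding on $\widetilde A(n)$ the spine length by $h_n^+$ and the number of subtraps by $n^\epsilon$, produces exactly the same upper bound $C(\ln n)n^{(1-\epsilon)(1/\gamma-1)+\epsilon}$ as in Lemma~\ref{neglectup}. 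The argument is uniform in $z\in\{root,\delta\}$: for $z=root$ it counts the time until the first visit to $\delta$, while for $z=\delta$ it counts the length of the excursion from $\delta$ that returns before visiting $root$.

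The only point to be slightly careful about is the one-dimensional reduction on the spine used to derive the explicit expression for $h^\omega$; once it is verified, properties~(2) and (3) hold as strict inequalities independently of the offspring structure along the spine, and no new probabilistic estimate beyond those already developed for Lemma~\ref{neglectup} is needed.
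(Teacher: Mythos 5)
Your proposal is correct and follows the paper's own route exactly: the paper likewise reduces Lemma~\ref{neglectdown} to the three monotonicity properties of the $h$-process for the conditioning $\{T_\delta^+<T_{root}^+\}$, derives them from the explicit harmonic function $\widehat{h}^{\omega}(y)=(\beta^{H+1}-\beta^{d(y\wedge\delta,\delta)})/(\beta^{H+1}-1)$ (which is the same as your expression after multiplying through by $\beta^{H+1}$), and then recycles the counting argument of Lemma~\ref{neglectup} verbatim. Your verification of the three properties and of the one-dimensional reduction on the spine is accurate, so nothing further is needed.
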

\begin{proof}
To apply the same methods as in the proof of Lemma \ref{neglectup}, we only need that the $h$-process corresponding to 
the conditioning 
on the event $\{T_{\delta}^+<T_{root}^+\}$ satisfies that
\begin{enumerate}
\item the walk remains unchanged on the subtraps,
\item for $y$ on the spine and $z$ a descendant of $y$ not on the spine, we have $P^{\omega}_y[X_1=\overrightarrow{y} |T_{\delta}^+<T_{root}^+]>
P^{\omega}_y[X_1 =z |T_{\delta}^+<T_{root}^+]$,
\item for $y\neq \{\delta,root\}$ on the spine, we have $P^{\omega}_y[X_1=\overrightarrow{y} |T_{\delta}^+<T_{root}^+]>\beta P^{\omega}_y[X_1=\overleftarrow{y} |T_{\delta}^+<T_{root}^+].$
\end{enumerate}

This immediately follows from the computation of the function $\widehat{h}^\omega$, given by $\widehat{h}^\omega(z) = P^{\omega}_z[T_{\delta}^+<T_{root}^+]$, 
with $\widehat{h}^{\omega}(root)=0$ and $\widehat{h}^{\omega}(\delta)=1$. A computation gives
\begin{equation}{\label{tensiontilde}}
\widehat{h}^{\omega}(y)=\widehat{h}^{\omega}(d(y\wedge \delta,\delta))=\frac{\beta^{H+1}-\beta^{d(y\wedge \delta,\delta)}}{\beta^{H+1}-1}.
\end{equation}
\end{proof}

From~(\ref{needneglectall}),~Lemma~\ref{neglectdown} and~Lemma~\ref{neglectup}, we deduce that
\begin{equation}
\label{neglectall}
\ES[\1{\widetilde A(n)} (T_{root}^+-T_{root}^{*,+})] \leq C (\ln n) n^{(1-\epsilon)(1/\gamma -1)+\epsilon}
\end{equation}
Now using~(\ref{neglectall}) and~(\ref{goal2_th_neglectall}) we prove~(\ref{goal_th_neglectall}), more precisely
\[
\text{for all $t>0$,} \qquad \PR\left[ \abs{\frac{\sum_{i=1}^{l_n} \chi_j(n)-\chi_j^*(n)}{n^{1/\gamma}}} \geq t \right]\leq o(1)+C(\ln n) n^{2\epsilon-1-\epsilon(1/\gamma-1)},
\]
and thus Proposition~\ref{th_neglectall} follows for $\epsilon<1/4$. \qed

\section{Analysis of the time spent in big traps}{\label{sectiontail}}

Let us denote $\overline{{{\bf Q}_n}}:= {\bf Q}[~\cdot \mid H=h_n^0]$ where 
\[
\label{hnulldef}
h_n^0=\lceil \ln n/-\ln {\bf f}'(q) \rceil\, .
\] 
Note that 
\begin{equation}
\label{p1H}
p_1(H) : = P^{\omega}_\delta[T_{\delta}^+<T_{root}^+]= \frac{1-\beta^{-1}}{1-\beta^{-(H+1)}}
\end{equation}
where we recall that the distance between $root$ and $\delta$ is $1+H$. Moreover let us denote 
\begin{equation}
\label{p2H}
p_2(H):=P^\omega_{\delta}[T_{root}^+ < T_{\delta}^+]=\frac{1-\beta^{-1}}{\beta^H-\beta^{-1}}.
\end{equation}

We have the following decomposition
\begin{equation}
\label{decompo_chi_star_init}
\chi_1^*(n)=\sum_{i=1}^{\text{Bin}(W_n,p_1(H))}\sum_{j=1}^{G(p_2(H))^{(i)}-1} T^{(i,j)}_{exc},
\end{equation}
where $T^{(i,j)}_{exc}$ is the time spent during the $j$-th excursion in the $i$-th trap, which is distributed under ${\bf Q}_n$ as $T_{\delta}^+$ under $P_{\delta}^{\omega}[~\cdot\mid T_{\delta}^+<T_{root}^+]$ with $\omega$ chosen according to ${\bf Q}_n$, for all $(i, j)$. The $T^{(i,j)}_{exc}$ are independent with respect to $P^{\omega}$ and for $i_1\neq i_2$  $(T^{(i_1,j)}_{exc})_{j \geq 1}$ and $(T^{(i_2,j)}_{exc})_{j \geq 1}$ are independent with respect to ${\bf Q}_n$. For $k\in \Z$ and $n$ large enough, let
$\mathcal{Z}_n^k$ be a random variable with the law of $\chi_1^{*}(n)/\beta^{H}$ under ${\bf Q}_{n+k}$ and
$\mathcal{\overline Z}_n^k$ be a random variable with the law of $\chi_1^{*}(n)/\beta^{H}$ under ${\overline {\bf Q}}_{n+k}$.
Furthermore we define
$\mathcal{Z}_{\infty}:=\frac{S_{\infty}}{1-\beta^{-1}} \sum_{i=1}^{\text{Bin}(W_{\infty},p_{\infty})} {\bf e}_i$, see (\ref{Zdef}),
where $({\bf e}_i)_{i\geq 1}$ is a family of i.i.d.~exponential random variables of parameter 1, chosen independently of the (independent) random variables $S_{\infty}$ and $W_{\infty}$. Our aim is to show the following
\begin{proposition}
\label{cvg_distribution}
We have
\[
\mathcal{\overline Z}_{n}^k \xrightarrow{d} \mathcal{Z}_{\infty}.
\]

Moreover there exists a random variable $\mathcal{Z}_{\text{sup}}$ such that 
\[
E[\mathcal{Z}_{\text{sup}}^{1-\epsilon}]<\infty, \quad \text{for any $\epsilon>0$},
\]
and 
\[
\text{for $n \in \N$ and $k> -n$,} \qquad \mathcal{\overline Z}_{n}^k \preceq \mathcal{Z}_{\text{sup}}.
\]
\end{proposition}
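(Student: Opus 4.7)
The strategy is to analyze the decomposition~(\ref{decompo_chi_star_init}) of $\chi_1^*(n)$, exploiting that under $\overline{\bf Q}_{n+k}$ the height $H=h_{n+k}^0$ is deterministic (so $\beta^H$ is not random) and that, by the construction of Section~\ref{trapcons}, the trap environment $\omega$, the number of visits $W_n$, and the excursion data are suitably independent.

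For the distributional convergence I would identify the asymptotics of each piece of~(\ref{decompo_chi_star_init}). The outer count satisfies $\mathrm{Bin}(W_n,p_1(H))\xrightarrow{d}\mathrm{Bin}(W_\infty,p_\infty)$: this follows from Proposition~\ref{Winfty} combined with $p_1(H)\to p_\infty$ read off from~(\ref{p1H}), with the uniform integrability supplied by the domination $W_n\preceq G(p_\infty/3)$ of Lemma~\ref{tailW}. For each inner sum I would combine the geometric-to-exponential limit $G^{(i)}/\beta^H\xrightarrow{d}\mathbf{e}_i/p_\infty$, obtained from~(\ref{p2H}) and $p_2(H)\beta^H\to p_\infty$, with a conditional strong law of large numbers $(G^{(i)}-1)^{-1}\sum_{j=1}^{G^{(i)}-1}T_{exc}^{(i,j)}\to E^\omega[T_{exc}]$. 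The key environment convergence $E^\omega[T_{exc}]\xrightarrow{d}S_\infty$ under $\overline{\bf Q}_{n+k}$ would be obtained from Geiger's construction: the trap under $\overline{\bf Q}_{n+k}$ is realized as the initial section $\mathcal{T}_{h_{n+k}^0}$ of the infinite tree $\mathcal{T}$, and as $h_{n+k}^0\to\infty$ the conditioning event $\{T_\delta^+<T_{root}^+\}$ becomes trivial on $\mathcal{T}$, whence, using the $h$-transform~(\ref{hproc}) and Lemma~\ref{meanreturntime}, $E^\omega[T_{exc}]\to E_\delta^{\omega_\mathcal{T}}[T_\delta^+]=S_\infty$. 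Combining the four limits, and using that $W_\infty$ depends only on the backbone walk (Remark~\ref{regenstruc2}), the $G^{(i)}$'s depend only on $H$, and the $T_{exc}^{(i,j)}$'s depend on the subtrap structure, yields
\[
\mathcal{\overline Z}_n^k \xrightarrow{d} \sum_{i=1}^{\mathrm{Bin}(W_\infty,p_\infty)}\frac{\mathbf{e}_i}{p_\infty}\,S_\infty \;=\; \mathcal{Z}_\infty.
\]

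For the uniform stochastic domination I would establish a uniform first-moment bound on $\mathcal{\overline Z}_n^k$ and then define $\mathcal{Z}_{\text{sup}}$ via Markov's inequality. The identity $p_1(H)/p_2(H)=\beta^H$ (immediate from (\ref{p1H}) and (\ref{p2H})), the bound $E[W_n]\le c_\beta$ from Lemma~\ref{tailW}, and the estimate $E^\omega[T_{exc}]\le E^\omega_\delta[T_\delta^+\wedge T_{root}^+]/p_1(H)\le S_\infty^{\omega_\mathcal{T}}/p_\infty$ (obtained by coupling the walk on the finite trap stopped at $root$ with the walk on $\mathcal{T}$) together give
\[
E[\mathcal{\overline Z}_n^k] \;\le\; \beta^{-H}\,p_1(H)\,c_\beta\,\frac{p_1(H)}{p_2(H)}\,\frac{E_{\bf Q}[S_\infty]}{p_\infty} \;=\; C\,E_{\bf Q}[S_\infty] \;<\;\infty,
\]
uniformly in $n$ and $k$, by Proposition~\ref{Sinfty}. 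Taking $\mathcal{Z}_{\text{sup}}$ to be a nonnegative random variable with tail $\PR[\mathcal{Z}_{\text{sup}}>t]=\min(1,C'/t)$ for $C'$ larger than the above bound, Markov's inequality yields $\mathcal{\overline Z}_n^k\preceq \mathcal{Z}_{\text{sup}}$ for all admissible $n,k$, and a direct tail integration gives $E[\mathcal{Z}_{\text{sup}}^{1-\epsilon}]=(C')^{1-\epsilon}/\epsilon<\infty$ for every $\epsilon\in(0,1)$, as required.

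The main technical obstacle is making rigorous the joint convergence $E^\omega[T_{exc}]\xrightarrow{d}S_\infty$: one must control the conditioning on $\{T_\delta^+<T_{root}^+\}$, which is non-trivial because $p_1(H)\to p_\infty>0$, while simultaneously letting the finite conditioned environment converge to $\mathcal{T}$ in the Geiger coupling, and verify that the joint convergence of $\omega$, the geometrics $G^{(i)}$, and the $T_{exc}^{(i,j)}$ is compatible with the independence structure needed for the product form of $\mathcal{Z}_\infty$. A secondary subtlety is the pathwise coupling between the walk on the finite trap (stopped at $root$) and the unstopped walk on $\mathcal{T}$ used in the moment bound, which exploits that the walk on $\mathcal{T}$ is positive recurrent at $\delta$ by Lemma~\ref{meanreturntime}.
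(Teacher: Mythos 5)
Your overall architecture for the convergence in distribution coincides with the paper's: the same decomposition (\ref{decompo_chi_star_init}), the same five elementary limits (the binomial count, geometric-to-exponential, a law of large numbers for the inner sums, $E^\omega_\delta[T_{exc}]\to S_\infty$ via the Geiger coupling, and $W_n\xrightarrow{d}W_\infty$), assembled in the same order. The one genuine gap is the step you label a ``conditional strong law of large numbers'' for $\sum_{j=1}^{G^{(i)}-1}T_{exc}^{(i,j)}$. Since the environment changes with $n$, there is no fixed sequence to which the SLLN applies; what is needed is a quantitative weak law, uniform over ${\bf Q}_n$-typical traps, and this forces one to control the second moment $E^\omega_\delta[T_{exc}^2]$ under the $h$-process. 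That is where the bulk of the paper's proof lies: the event $A_4(n)=\{E^\omega_\delta[T_{exc}^2]\le n^{(1-2\epsilon)/\gamma}\}$ and Lemma \ref{A4}, proved via reversibility of the conditioned walk, bounds on the number of visits $N(y)$ to each vertex, and the conductance formulas (\ref{chat1})--(\ref{chat2}). Your proposal does not identify this as the obstacle --- you point instead at $E^\omega[T_{exc}]\to S_\infty$, which is comparatively easy once one writes the explicit formula (\ref{computT1}): it exhibits $E^\omega_\delta[T_{exc}]$ as a truncated, deterministically reweighted version of $S_\infty$ on the same probability space, giving convergence in probability (not merely in distribution) and, for free, the pathwise bound $E^\omega_\delta[T_{exc}]\le S_\infty$, which replaces your somewhat delicate stopped-walk coupling.

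For the stochastic domination your route is genuinely different from, and simpler than, the paper's. The paper dominates $\overline{\mathcal{Z}}_n^k$ by $\beta^{-h^0_{n+k}}\sum_{i=1}^{G(p_\infty/3)}\sum_j T^{\infty,(i,j)}_{exc}$, collapses the double geometric sum into a single geometric one, and invokes a technical averaging lemma (itself just Markov's inequality in disguise); you instead bound the annealed first moment $\ES[\overline{\mathcal{Z}}_n^k]$ uniformly in $n$ and $k$ (using Wald's identity, $p_1(H)/p_2(H)=\beta^H$, Lemma \ref{tailW} and Proposition \ref{Sinfty}) and define $\mathcal{Z}_{\text{sup}}$ through its tail $\min(1,C'/t)$. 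Since the proposition only asks for a dominating variable with finite $(1-\epsilon)$-moments, Markov's inequality applied directly to $\overline{\mathcal{Z}}_n^k$ delivers exactly what is claimed; this part of your argument is complete once the first-moment bound is justified with the paper's inequality $E^\omega_\delta[T_{exc}]\le S_\infty$ in place of your coupling, and once the independence needed for Wald (supplied by Remark \ref{switchnotation} and the construction of Section \ref{trapcons}) is made explicit.
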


Let us start by proving the convergence in law. The decomposition (\ref{decompo_chi_star_init}) for $\chi_1^*(n)$ can be rewritten using~(\ref{p2H})
\begin{equation}
\label{decompo_chi_star}
\chi_1^*(n)=\beta^H\sum_{i=1}^{\text{Bin}(W_n,p_1(H))}\frac{1-\beta^{-H-1}}{1-\beta^{-1}}\frac{\beta^H-1}{\beta^H-\beta^{-1}} \frac{p_2(H)}{1-p_2(H)} \sum_{j=1}^{G(p_2(H))^{(i)}-1} T^{(i,j)}_{exc},
\end{equation}
which yields an explicit expression of $\mathcal{Z}_n^k$. We point out that $E[G(p_2(H))-1]=(1-p_2(H))/p_2(H)$. The convergence in law is due to the following facts (more precise statements follow below):
\begin{enumerate}
\item For $H$ large, 
\[
\frac{(1-\beta^{-H-1})(\beta^H-1)}{(1-\beta^{-1})(\beta^H-\beta^{-1})}\approx \frac 1 {1-\beta^{-1}},
\]
\item By the law of large numbers, we can expect 
\[
\sum_{j=1}^{G(p_2(H))^{(i)}-1} T^{(i,j)}_{exc} \approx (G(p_2(H))^{(i)}-1) E^{\omega}_\delta[T_{exc}],
\]
\item Since $p_2(H)$ is small, $(G(p_2(H))^{(i)}-1)/E[G(p_2(H))^{(i)}-1]\approx {\bf e_i}$,
\item $E^{\omega}_\delta[T_{exc}^{(1,1)}] \approx S_{\infty}$ for $H$ large enough,
\item $\text{Bin}(W_n,p_1(H)) \approx \text{Bin}(W_{\infty},p_{\infty})$ since $W_n \xrightarrow{d} W_{\infty}$ by Proposition~\ref{Winfty} and $p_1(H)\to p_{\infty}$ as $H$ goes to infinity.
\end{enumerate}

Fact $(1)$ is easily obtained, since for $\xi>0$
\begin{equation}
\label{returnapprox2}
{\bf Q}_n\Bigl[(1-\xi) \frac 1{1-\beta^{-1}}\leq \frac{1-\beta^{-H+1}}{1-\beta^{-1}} \frac{\beta^H-1}{\beta^H-\beta^{-1}}\leq \frac 1 {1- \beta^{-1}}\Bigr]=1, 
\end{equation}
for $n$ large enough. \\
We start by computations with the measure ${\bf Q}_n$ and we will be able to come back to $\overline{{\bf Q}}_{n+k}$.

For $(2)$ and $(4)$, we need to understand $P^{\omega}_{\delta}[\cdot|T_{\delta}^+<T_{root}^+]$ and to this end we will consider the $h$-process associated with this conditioning. Recall the function $\widehat{h}^{\omega}$ given as $\widehat{h}^{\omega}(z) = 
P^{\omega}_z[T_{\delta} <T_{root}]$, with $\widehat{h}^{\omega}(\delta)=1$ and $\widehat{h}^{\omega}(root)=0$, see~(\ref{tensiontilde}).

We shall enumerate the vertices of the backbone from $0$ to $H+1$, starting from $\delta$ up to $root$. With these new notations formula~(\ref{tensiontilde}) becomes
\begin{equation}
\label{tension}\widehat{h}^{\omega}(y)=\widehat{h}^{\omega}(y\wedge \delta)=\frac{\beta^{H+1}-\beta^{y\wedge \delta}}{\beta^{H+1}-1},
\end{equation}
where $y\wedge \delta$ is identified to its number which is $d(y\wedge \delta, \delta)$ as it is a vertex of the backbone.

The transition probabilities are then given as in~(\ref{hproc}). Obviously they arise from conductances, we may take
\begin{itemize}
\item[(i)] $\widehat{c}(0,1)=1$,
\item[(ii)] $\widehat{c}(i,i+1)= \widehat{c}(i-1,i) \frac{P^{\omega}_i[X_1=i+1|T_{\delta}^+<T_{root}^+]}{P^{\omega}_i[X_1=i-1|T_{\delta}^+<T_{root}^+]}$, for $1 \leq i \leq H,$
\item[(iii)] $\widehat{c}(i,z)= \widehat{c}(i,i-1)\frac{P^{\omega}_i[X_1=z|T_{\delta}^+<T_{root}^+]}{P^{\omega}_i[X_1=i-1|T_{\delta}^+<T_{root}^+]}$, for $i\neq 0$ on the spine and $z$ one of its descendants which is not on the spine,
\item[(iv)] $\widehat{c}(y,z)= \beta \widehat{c}(\overleftarrow{y},y)$ for any vertex $y$ not on the spine and $z$ one of its descendants.
\end{itemize}

We can easily deduce from this that for $y\neq root$ in the trap and denoting $z_0=\delta, \ldots, z_{n}=y$ the geodesic path from $\delta$ to $y$:
\[
\widehat{c}(z_{n-1},y)=\prod_{j=1}^{n-1}  \frac{P^{\omega}_{z_j}[X_1=z_{j+1}|T_{\delta}^+<T_{root}^+]}{P^{\omega}_{z_j}[X_1=z_{j-1}|T_{\delta}^+<T_{root}^+]},
\]
which gives using~(\ref{tension}) that
\begin{equation}
\label{chat1}\widehat{c}(i,i+1)= \beta^{-i} \frac{\widehat{h}(i+1)\widehat{h}(i)}{\widehat{h}(1)\widehat{h}(0)}=\beta^{-i}\frac{(1-\beta^{i-H})(1-\beta^{i-(H+1)})}{(1-\beta^{-H})(1-\beta^{-(H+1)})}.
\end{equation}

For a vertex $z$ not on the spine, we have 
\begin{align}
\label{chat2}\widehat{c}(z, \overleftarrow{z} )&\displaystyle{= \beta^{d(\overleftarrow{z},z\wedge \delta)} \frac{\widehat{h}(z \wedge \delta)}{\widehat{h}(z \wedge \delta-1)} c(z\wedge \delta, z\wedge \delta -1)}\nonumber \\ &\displaystyle{=\beta^{d(\overleftarrow{z},z\wedge \delta)} \frac{1-\beta^{z \wedge \delta -(H+1)}}{1-\beta^{z\wedge \delta-1 -(H+1)}} 
c(z\wedge \delta,z\wedge \delta -1 )}.
\end{align}

Together with Lemma~\ref{meanreturntime}, this yields, with $T_{exc}$ a generic random variable with the law of  $T_{exc}^{(1,1)}$,
\begin{equation}
\label{computT1}
E^{\omega}_{\delta}[T_{exc}]=2\sum_{i=0}^{H-1} \beta^{-i}\frac{(1-\beta^{i-H})(1-\beta^{i-(H+1)})}{(1-\beta^{-H})(1-\beta^{-(H+1)})} \Bigl(1+\frac{1-\beta^{i-(H+1)}}{1-\beta^{(i-1)-(H+1)}} \Lambda_i(\omega)\Bigr)
\end{equation}
where $\Lambda_i$ was defined in~(\ref{notationLi}).

We see that the random variable $S_{\infty}$ is the limit of the last quantity as $H$ goes to infinity.  More precisely, using~(\ref{computT1}) we have $0 \leq  S_{\infty}-E^{\omega}_{\delta}[T_{exc}]$ and for $n$ large enough such that
\[
\text{for all $k \leq h_n/2$,} \qquad \frac{(1-\beta^{k-h_n})(1-\beta^{k-(h_n+1)})}{(1-\beta^{-h_n})(1-\beta^{-(h_n+1)})} \geq 1- 2 \beta^{h_n},
\]
we get
\begin{align*}
S_{\infty}-E^{\omega}_{\delta}[T_{exc}] & \leq 2\Bigl(\sum_{i=0}^{h_n/2} \beta^{-i}\Bigl(1-\frac{(1-\beta^{i-H})(1-\beta^{i-(H+1)})}{(1-\beta^{-H})(1-\beta^{-(H+1)})}\Bigr)(1+\Lambda_i)\Bigr) \\ 
& \qquad \qquad \qquad \qquad + 2\Bigl(\sum_{i=h_n/2+1}^{\infty} \beta^{-i}(1+\Lambda_i)\Bigr),\\
& \leq 4 \beta^{-h_n/2} \Bigl(\sum_{i=0}^{h_n/2} \beta^{-i}(1+\Lambda_i)\Bigr) + 2\Bigl(\sum_{i=h_n/2+1}^{\infty} \beta^{-i}(1+\Lambda_i)\Bigr).
\end{align*} 

Hence, since $S_{\infty}\geq 1$ and using Chebyshev's inequality, we get
\begin{align}
\label{approx_4}
{\bf Q}_n\Bigl[(1-\xi) S_{\infty} < E^{\omega}_\delta[T_{exc}] < S_{\infty}\Bigr] & \geq 1- {\bf Q}_n[S_{\infty}-E^{\omega}_\delta[T_{exc}] \geq \xi] \\ \nonumber
& \geq 1-\frac 1 {\xi} \beta^{-h_n/2} \frac{10}{1-\beta^{-1}} \sup_{i \geq 0}E_ {\bf Q}[1+\Lambda_i]\\ \nonumber
& =1+o(1),
\end{align}
where we used Lemma~\ref{L} and the fact that $\epsilon<1/4$, this proves $(4)$.

In order to prove $(2)$, we have to bound $E_{\delta}^{\omega}[T_1^2]$ from above. This is not possible for all $\omega$, but we 
consider the event
\[
A_4(n)=\Bigl\{E^{\omega}_{\delta}[T_1^2]\leq n^{\frac{1-2\epsilon}{\gamma}}\Bigr\},
\]
and show that it satisfies the following.

\begin{lemma}{\label{A4}}
For $0<\epsilon < \min(1/3, 2\gamma/3)$, we have 
\[
{\bf Q}_n[A_4(n)^c]\to 0\, .
\]
\end{lemma}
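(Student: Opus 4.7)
The natural approach is Markov's inequality:
\[
{\bf Q}_n[A_4(n)^c] \leq n^{-(1-2\epsilon)/\gamma}\, E_{{\bf Q}_n}\bigl[E^{\omega}_{\delta}[T_1^2]\bigr],
\]
so the goal reduces to showing $E_{{\bf Q}_n}[E^{\omega}_{\delta}[T_1^2]] = o\bigl(n^{(1-2\epsilon)/\gamma}\bigr)$. I would first restrict to the event $\widetilde{A}(n)$: its complement has $\mathbf{Q}_n$-probability $o(n^{-\epsilon})$ by Lemma~\ref{negA}, and on it $T_1$ is dominated by the polynomial-size volume of the trap, so that this contribution is negligible.

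On $\widetilde{A}(n)$, view the conditioned walk as the reversible chain with conductances $\widehat{c}$ from (\ref{chat1})--(\ref{chat2}) and invariant measure $\widehat{\pi}$. Applying the strong Markov property at each time $0\leq k<T_1$ together with the Green-function identity $E^{\omega}_x[N_y]=\widehat{\pi}(y)/\widehat{\pi}(x)$ yields the classical second-moment formula
\[
E^{\omega}_{\delta}[T_1^2] = E^{\omega}_{\delta}[T_1] + \frac{2}{\widehat{\pi}(\delta)} \sum_y \widehat{\pi}(y)\, E^{\omega}_y[T_\delta].
\]
The first term is $O(S_\infty)$ in $\mathbf{Q}_n$-expectation by a computation analogous to Proposition~\ref{Sinfty}. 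For the second, I would use the crude bound $\sum_y \widehat{\pi}(y)\, E^{\omega}_y[T_\delta] \leq \widehat{\pi}(V)\cdot \max_y E^{\omega}_y[T_\delta]$ and control $\max_y E^{\omega}_y[T_\delta]$ via the drift of the $h$-process on the spine (facts~(1)--(3) after (\ref{hproc})): the spine walk reaches $\delta$ in geometrically many steps thanks to the drift, and each step collects an independent unconditioned subtrap excursion whose first moment is bounded via Lemma~\ref{meanreturntime} and Lemma~\ref{L}.

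Taking the expectation under $\mathbf{Q}_n$ and summing over subtrap heights capped at $h_n$ on $\widetilde{A}(n)$ using the tail bound (\ref{tailmaj}), one obtains an estimate of the form $E_{{\bf Q}_n}[E^{\omega}_\delta[T_1^2]] = O(n^{\alpha(\epsilon)})$; the comparison $\alpha(\epsilon) < (1-2\epsilon)/\gamma$ is precisely what produces the constraint $\epsilon < 2\gamma/3$ (together with $\epsilon<1/3$). The main obstacle is producing a sharp enough bound on $\max_y E^{\omega}_y[T_\delta]$: naive domination by the trap diameter is too lossy, and one must exploit both the drift of the $h$-process on the spine and the finite-expectation structure of subtrap excursions (Lemma~\ref{L}, Proposition~\ref{Sinfty}) in order to obtain an exponent that survives the loss of a factor $n^\epsilon$ coming from the conditioning on $H\geq h_n$.
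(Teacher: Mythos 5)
Your overall plan (Markov's inequality applied to the annealed second moment $E_{{\bf Q}_n}[E^{\omega}_{\delta}[T_1^2]]$) is genuinely different from the paper's, and it hides a real difficulty at exactly the step you leave as ``one obtains an estimate of the form $O(n^{\alpha(\epsilon)})$''. The paper never bounds the annealed second moment: it bounds the quenched quantity $E^{\omega}_{\delta}[T_{exc}^2]^{1/2}$ pointwise by $C\sum_y\beta^{d(\delta,\delta\wedge y)/2}\pi(y)$ (via Minkowski applied to $T_{exc}=\sum_yN(y)$, a geometric bound on $N(y)$, and reversibility), takes the ${\bf Q}_n$-expectation of this \emph{square root} --- which requires only the first-moment bounds of Lemma~\ref{Z} and produces the series $\sum_i(\beta^{1/2}{\bf f}'(q))^i$ --- and then applies Chebyshev to the square root at level $n^{(1-2\epsilon)/(2\gamma)}$. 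Your route forces you to compute $E_{{\bf Q}_n}[E^{\omega}_{\delta}[T_{exc}^2]]$ itself, and this quantity is dominated by \emph{second} moments of the subtrap weights: a subtrap of height $j$ hanging at spine position $i$ is reached with probability of order $\beta^{-i}$, and since the bias points \emph{into} subtraps the escape time from its bottom is of order $\beta^{j}$, contributing $\beta^{-i}\beta^{2j}$ to the quenched second moment; averaging over the environment gives $\sum_{i\leq H}\beta^{-i}\sum_{j\leq i}{\bf f}'(q)^j\beta^{2j}\asymp(\beta{\bf f}'(q))^{H}$, i.e.\ $E_{{\bf Q}_n}[\1{\widetilde A(n)}E^{\omega}_{\delta}[T_{exc}^2]]\asymp n^{(1\pm\epsilon)(1/\gamma-1)}$. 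This genuinely requires second-moment information on the $\Lambda_i$ (of order $(\beta^{2}{\bf f}'(q))^{i}$, using ${\bf E}[Z^2]<\infty$), which the first-moment tools you cite (Lemma~\ref{meanreturntime}, Lemma~\ref{L}) do not provide. For the same reason your intermediate bound $\widehat{\pi}(V)\cdot\max_yE^{\omega}_y[T_{\delta}]$ is too lossy as stated: the maximum is attained at the bottom of the deepest subtrap and is of order $\beta^{j_{\max}}$, which on $\widetilde A_3(n)$ can be as large as $\beta^{h_n}=n^{(1-\epsilon)/\gamma}$; the drift of the $h$-process on the spine controls nothing off the spine, so this bound only works after averaging and after handling the correlation between $\widehat{\pi}(V)$ and $j_{\max}$.

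Moreover, even granting the correct estimate $E_{{\bf Q}_n}[\1{\widetilde A(n)}E^{\omega}_{\delta}[T_{exc}^2]]\leq Cn^{(1+\epsilon)(1/\gamma-1)}$, Markov's inequality yields ${\bf Q}_n[A_4(n)^c]\leq Cn^{3\epsilon/\gamma-1-\epsilon}+o(1)$, which tends to $0$ only for $\epsilon<\gamma/(3-\gamma)$. Since $\gamma/(3-\gamma)<2\gamma/3$ for every $\gamma\in(0,1)$, your method cannot produce the constraint $\epsilon<\min(1/3,2\gamma/3)$ that you claim it yields; it proves the lemma only on a strictly smaller range of $\epsilon$. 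The paper's square-root device is precisely what halves the exponent of $\beta$ (replacing $(\beta{\bf f}'(q))^{h_n^+}$ by $(\beta^{1/2}{\bf f}'(q))^{h_n^+}$) and gives the exponent $3\epsilon/(2\gamma)-1-\epsilon$, negative for all $\epsilon<2\gamma/3$. A smaller admissible range would likely still suffice downstream (the paper ultimately takes $\epsilon$ small), but as a proof of the lemma as stated your argument does not close.
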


\begin{proof}
In this proof we denote for $y$ in the trap, $N(y)$ the number of visits to $y$ during an excursion from $\delta$, which is distributed as $\card \{0\leq n \leq T_{\delta}^+: X_n=y \}$ under $P_{\delta}^{\omega}[\cdot|T_{\delta}^+<T_{root}^+]$. We have, using the Minkowski inequality,
\begin{align*}
E_{\delta}^{\omega}[T_{exc}^2] & = E_{\delta}^{\omega}\Bigl[\Bigl(\displaystyle{\sum_{y\in \text{trap}}} N(y)\Bigr)^2\Bigr] \\
                 & \leq \displaystyle{\sum_{y,z \in \text{trap}}} E_{\delta}^{\omega}[N(y)^2]^{1/2}E_{\delta}^{\omega}[N(z)^2]^{1/2}\\
                 & = \Bigl(\displaystyle{\sum_{y\in \text{trap}} }E_{\delta}^{\omega}[N(y)^2]^{1/2}\Bigr)^2.
\end{align*}

Now fix $y$ in the trap, denote $q_1=P_{\delta}^{\omega}[T_y^+<T_{\delta}^+|T_{\delta}^+<T_{root}^+]$ and $q_2=P_y^{\omega}[T_{\delta}^+<T_y^+|T_{\delta}^+<T_{root}^+]$. Then we have 
\[
\forall k\geq 1,\ P_{\delta}^{\omega}[N(y)=k]=q_1(1-q_2)^{k-1}q_2.
\]

Hence
\[
E^{\omega}_{\delta}[N(y)^2]=\sum_{n\geq 1} n^2 q_1(1-q_2)^{n-1}q_2=q_1 \frac {2-q_2}{q_2^2}\leq  \frac{2q_1}{q_2^2}.
\]

Then by reversibility of the walk, if $\widehat{\pi}$ is the invariant measure associated with the conductances $\widehat{c}$, we get $q_1=\widehat{\pi}(\delta)q_1=\widehat{\pi}(y)q_2$. This yields
\begin{equation}
\label{technical_1}
E^{\omega}_{\delta}[N(y)^2]\leq \frac{2 \widehat{\pi}(y)}{q_2}.
\end{equation}

Furthermore we have 
\begin{equation}
\label{lb_q2}
q_2\geq  (1/(Z_1(y)\beta +1))p_{\infty}\beta^{-d(\delta,\delta\wedge y)}/2.
\end{equation}

Indeed suppose that $y$ is not on the spine, otherwise the bound is simple. Starting from $y$, we reach the ancestor of $y$ with probability at least $(1/(Z_1(y)\beta +1))$ then the walker has probability at least $\beta^{-d(y,y\wedge \delta)}$ to reach $y\wedge \delta$ before $y$, next he has probability at least $1/2$ to go to $\overrightarrow{y\wedge\delta}$ before going to $z$, where $z$ is the first vertex on the geodesic path from $y\wedge \delta$ to $y$. Finally from $\overrightarrow{y\wedge\delta}$, the walker has probability at least $p_{\infty}$ to go to $\delta$ before coming back to $\overrightarrow{y\wedge\delta}$.

We denote by $\pi$ the invariant measure associated with the $\beta$-biased random walk (i.e.~not conditioned on $T_{\delta}^+<T_{root}^+$), normalized so as to have $\pi(\delta)=1$. Then we have 
\begin{enumerate}
\item For any $y$ in the trap, $\widehat{\pi}(y)\leq \pi(y)$ because of~(\ref{chat1}) and~(\ref{chat2}),
\item and by definition of the invariant measure $(Z_1(y)\beta +1) \beta^{d(\delta,\delta\wedge y)-d(y,\delta\wedge y)} = \pi(y).$
\end{enumerate}

Now plugging (2) in~(\ref{lb_q2}) yields a lower bound on $q_2$ which can be used together with (1) in~(\ref{technical_1}) to get
\[
E^{\omega}_\delta[N(y)^2] \leq C \beta^{d(\delta,\delta\wedge y)} \pi(y)^2,
\]
and
\[
E^{\omega}_\delta[T_{exc}^2]^{1/2}\leq C \sum_{y\in \text{trap}} \beta^{d(\delta,\delta\wedge y)/2} \pi(y).
\]

As a consequence, with $\widetilde A(n)$ as in Lemma~\ref{negA} we get
\begin{align*}
E_{{\bf Q}_n}[\1{\widetilde A(n)}E^{\omega}_\delta[T_{exc}^2]^{\frac 12}]&\leq C E_{{\bf Q}_n}\Bigl[\1{\widetilde A(n)} \displaystyle{\sum_{i=0}^{h_n^+} }\beta^{-i/2} \Lambda_i\Bigr]\\
                                          &\leq C \displaystyle{\sum_{i=0}^{h_n^+}} (\beta^{1/2} {\bf f}'(q))^i \\
                                          &\leq C \max(1, (\beta^{1/2} {\bf f}'(q))^{h_n^+}),
\end{align*}
where we used Lemma~\ref{meanreturntime} and Lemma~\ref{Z} for the first inequality.

Since $(\beta^{1/2} {\bf f}'(q))^{h_n^+}=n^{(1+\epsilon)(1/2\gamma -1)}$, we get by Chebyshev's inequality that
\begin{align*}
{\bf Q}_n[\1{\widetilde A(n)}E^{\omega}_\delta[T_{exc}^2]^{1/2} \geq n^{\frac{1-2\epsilon}{2\gamma}}]&\leq \frac 1 {n^{\frac{1-2\epsilon}{2\gamma}}}
E_{{\bf Q}_n}[\1{\widetilde A(n)}E^{\omega}_\delta[T_1^2]^{1/2}] \\
                                        & \leq  C\max(n^{-\frac{1-2\epsilon}{2\gamma}}, n^{3\epsilon/(2\gamma)-1-\epsilon}).
\end{align*}

The conditions on $\epsilon$ ensure that this last term goes to $0$ for $n \to \infty$. Hence 
\[
{\bf P}[\widetilde A(n)\cap A_4(n)^c] \to 0
\]
and the result follows using Lemma~\ref{negA}.
\end{proof}

We now turn to the study of 
\[
\frac{p_2(H)}{1-p_2(H)}\sum_{i=1}^{G(p_2(H))-1} T_{exc}^{(i)}.
\]

Consider the random variable
\begin{equation}
\label{var_geom}
N_g= \Bigl\lfloor \frac{-1}{\ln (1-p_2(H))} {\bf e} \Bigr\rfloor,
\end{equation}
where ${\bf e}$ is an exponential random variable of parameter 1. A simple computation shows that $N_g$ has the law of $G(p_2(H))-1$.

Set $\xi>0$, we have using Chebyshev's inequality,
\begin{align*}
       & {\bf Q}_n\Bigl[(1-\xi)N_g E^{\omega}_\delta[T_{exc}] \leq \sum_{i=1}^{N_g} T_{exc}^{(i)}\leq (1+\xi)N_g E^{\omega}_\delta[T_{exc}]\Bigr] \\
\geq & 1-{\bf Q}_n\Bigl[\Bigl\vert\frac{\sum_{i=1}^{N_g} T_{exc}^{(i)}}{N_g} -E^{\omega}_\delta[T_{exc}]\Bigr\vert>\xi E^{\omega}_\delta[T_{exc}],N_g\neq 0, E^{\omega}_\delta[T_{exc}^2]\leq n^{(1-2\epsilon)/\gamma}\Bigr] \\
    & \qquad \qquad \qquad \qquad - {\bf Q}_n[E^{\omega}[T_{exc}^2] \geq n^{(1-2\epsilon)/\gamma}]-{\bf Q}_n[N_g=0] \\
\geq & E_{{\bf Q}_n}\Bigl[\frac{n^{(1-2\epsilon)/\gamma}}{N_g} \1{N_g \neq 0} \frac 1{\xi^2}\Bigr]- {\bf Q}_n[E^{\omega}[T_{exc}^2] \geq n^{(1-2\epsilon)/\gamma}]-
{\bf Q}_n[N_g=0].
\end{align*}

We have ${\bf Q}_n[N_g=0]= p_2(H)\leq p_2(h_n) \leq C n^{-(1-\epsilon)/\gamma}\ln n$, and hence
\[
E_{{\bf Q}_n}\Bigl[\frac{\1{N_g \neq 0}}{N_g}\Bigr]=\ES\Bigl[-\frac{p_2(H)}{1-p_2(H)} \ln p_2(H)\Bigr] \leq  C n^{-(1-\epsilon)/\gamma}(\ln n)^2.
\]

Putting together the two previous equations, using Lemma~\ref{A4}, we get for $\xi <1$,
\begin{equation}
\label{LLNonreturns}
{\bf Q}_n\Bigl[(1-\xi)N_gE^{\omega}_\delta[T_{exc}] <\sum_{i=1}^{N_g} T_{exc}^{(i)}<(1+\xi)N_g E^{\omega}_\delta[T_{exc}]\Bigr] \to 1\, .
\end{equation}
This shows $(2)$.
Turning to prove $(3)$, we have
\begin{align*}
&{\bf Q}_n\Bigl[(1-\xi)\Bigl\lfloor \frac 1 {- \ln (1-p_2(H))}  {\bf e}\Bigr\rfloor \leq \frac {1-p_2(H)} {p_2(H)} {\bf e} \leq (1+\xi) \Bigl\lfloor \frac 1 {-\ln (1-p_2(H))} {\bf e} \Bigr\rfloor\Bigr] \\
\geq &1-{\bf Q}_n\Bigl[\Bigl(\frac{1-p_2(H)}{p_2(H)}-\frac {1-\xi} {-\ln (1-p_2(H))}\Bigr) {\bf e} <1\Bigr] \\ & \qquad \qquad -  {\bf Q}_n\Bigl[\Bigl(\frac{1-p_2(H)}{p_2(H)} -\frac {1+\xi} {-\ln (1-p_2(H))}\Bigr) {\bf e} >-2 \Bigr],
\end{align*}
furthermore since $\abs{\frac{1-p}p-\frac 1 {-\ln (1-p)}}$ is bounded on $(0,\epsilon_1)$ by a certain $M>0$ so that for $n$ large enough with $p_2(h_n)<\epsilon_1$, we get 
\begin{align}
\label{geom2exp}
& {\bf Q}_n\Bigl[(1-\xi)\Bigl\lfloor \frac 1 {- \ln (1-p_2(H))} {\bf e} \Bigr\rfloor \leq \frac {1-p_2(H)} {p_2(H)} {\bf e}  \leq (1+\xi) \Bigl\lfloor \frac 1 {-\ln (1-p_2(H))} {\bf e} \Bigr\rfloor\Bigr]  \\ \nonumber
\geq & 1-{\bf Q}_n\Bigl[\Bigl(\frac {\xi} {-\ln (1-p_2(H))}-M \Bigr){\bf e}<1\Bigr]-{\bf Q}_n\Bigl[\Bigl(-\frac {\xi} {-\ln (1-p_2(H))}+M \Bigr){\bf e}>-2\Bigr]  \\ \nonumber
\geq & \exp\Bigl(-\frac 2{\xi/(-\ln (1-p_2(h_n)))-M}\Bigr)\geq 1-(C/\xi)p_2(h_n),
\end{align}
which shows $(3)$.

As a consequence of ~(\ref{approx_4}),~(\ref{LLNonreturns}) and~(\ref{geom2exp}), we see that for all $\xi \in (0,1)$,
\begin{equation}
\label{returnapprox1}
{\bf Q}_n\Bigl[(1-\xi) S_{\infty} {\bf e} \leq \frac{p_2(H)}{1-p_2(H)}\sum_{i=1}^{N_g} E^{\omega}[T_{exc}^{(i)}]\leq (1+\xi) S_{\infty} {\bf e} \Bigr] \to 1
\end{equation}
for $ n \to \infty$.
Using~(\ref{p2H}),~(\ref{returnapprox1}) and~(\ref{returnapprox2}) we get
\begin{equation}
\label{returnapprox22}
{\bf Q}_n\Bigl[(1-\xi) \frac{S_{\infty} {\bf e}}{1-\beta^{-1}} \leq \frac 1{\beta^H}\sum_{i=1}^{N_g} E^{\omega}[T_{exc}^{(i)}]\leq (1+\xi) \frac{S_{\infty} {\bf e}}{1-\beta^{-1}} \Bigr] \to 1
\end{equation}
for $ n \to \infty$, which sums up $(2)$, $(3)$ and $(4)$. For any $k> -n$, the equation~(\ref{returnapprox22}) obviously holds replacing $n$ with $n+k$, 
and since \\
${\bf Q}_n[H=\lceil \ln (n+k) /\ln {\bf f}'(q) \rceil]\geq c_k>0$ (this follows from Lemma \ref{tailH}),
we have
\begin{equation}
\label{returnapprox}
\overline{{\bf Q}}_{n+k}\Bigl[(1-\xi) \frac{S_{\infty} {\bf e}}{1-\beta^{-1}} \leq \frac 1{\beta^H}\sum_{i=1}^{N_g} E^{\omega}[T_{exc}^{(i)}]\leq (1+\xi) \frac{S_{\infty} {\bf e}}{1-\beta^{-1}} \Bigr]\to 1.
\end{equation}

Only part $(5)$ remains to be shown. Coupling $\text{Bin}(W_{n},p_{\infty})$ and $\text{Bin}(W_{n},p_1(H))$ in the standard way,
\begin{align*}
&\overline{{\bf Q}}_{n+k}[\text{Bin}(W_{n},p_{\infty}) \neq \text{Bin}(W_{n},p_1(H))]\\
\leq& \sum_{j\geq 0} \PR[W_{n}=j] \overline{{\bf Q}}_{n+k}[\text{Bin}(j,p_{\infty}) \neq \text{Bin}(j,p_1(H))]\\
\leq& \sum_{j\geq 0} \PR[W_{n}=j] j \left(p_1(h^0_{n+k}) - p_{\infty})\right) \\ 
\leq& \ES[W_{n}]\left(p_1(h^0_{n+k}) - p_{\infty}\right)\\ 
\leq& C \left(p_1(h^0_{n+k}) - p_{\infty}\right) \to 0,\quad\text{ for } n \to \infty
\end{align*}
where $ C:=E[G(p_{\infty}/3)]  \geq \ES[W_{n}] $ by Lemma~\ref{tailW}. Hence,
\[
\overline{{\bf Q}}_{n+k}\Bigl[\frac 1{\beta^H}\sum_{i=1}^{\text{Bin}(W_{n},p_1(H))}\sum_{j=1}^{G(p_2(H))-1} T_{exc}^{(i,j)}\geq t\Bigr] -
\overline{{\bf Q}}_{n+k}\Bigl[\frac 1{\beta^H}\sum_{i=1}^{\text{Bin}(W_{n},p_{\infty})}\sum_{j=1}^{G(p_2(H))-1} T_{exc}^{(i,j)}\geq t\Bigr]
\to 0
\]

For $\epsilon_1>0$, introduce $N(\epsilon_1)$ such that $\max_{n\leq \infty} \PR[W_n \geq N(\epsilon_1)] \leq (1-p_{\infty}/3)^{N(\epsilon_1)} \leq  \epsilon_1$ and using the independence of $W_n$ (for $n\in \N\cup \{\infty\}$) of the trap and the walk on the trap, we get for any $\epsilon_1>0$,
\begin{align*}
     & \left \vert \overline{{\bf Q}}_{n+k}\Bigl[\frac 1{\beta^H}\sum_{i=1}^{\text{Bin}(W_{n},p_{\infty})}\sum_{j=1}^{G(p_2(H))-1} T_{exc}^{(i,j)}\geq t\Bigr] 
 - \overline{{\bf Q}}_{n+k}\Bigl[\frac 1{\beta^H}\sum_{i=1}^{\text{Bin}(W_{\infty},p_{\infty})}\sum_{j=1}^{G(p_2(H))-1} T_{exc}^{(i,j)}\geq t\Bigr]\right \vert\\
      &\leq  \left\vert \sum_{j\geq 0}(\PR[W_{n}=j]-\PR[W_{\infty}=j])  \overline{{\bf Q}}_{n+k}\Bigl[\frac 1{\beta^H}\sum_{i=1}^{\text{Bin}(j,p_{\infty})}\sum_{j=1}^{G(p_2(H))-1} T_{exc}^{(i,j)}\geq t\Bigr]\right\vert\\
      &\leq  \left\vert\sum_{j\in[0,N(\epsilon_1)]}(\PR[W_{n}=j]-\PR[W_{\infty}=j]) \overline{{\bf Q}}_{n+k}\Bigl[\frac 1{\beta^H}\sum_{i=1}^{\text{Bin}(j,p_{\infty})}\sum_{j=1}^{G(p_2(H))-1} T_{exc}^{(i,j)}\geq t\Bigr]\right\vert+ \epsilon_1\\
\end{align*}
and the right-hand side goes to $\epsilon_1$ as $n$ goes to infinity since
\[
\max_{j\leq N(\epsilon_1)}\abs{\PR[W_n=j]-\PR[W_{\infty}=j]} \to 0,
\]
by Proposition~\ref{Winfty}. So letting $\epsilon_1$ go to 0, we see that
\begin{equation}
\label{approx_234}
\overline{{\bf Q}}_{n+k}\Bigl[\frac 1{\beta^H}\sum_{i=1}^{\text{Bin}(W_{n},p_1(H))}\sum_{j=1}^{G(p_2(H))-1} T_{exc}^{(i,j)}\geq t\Bigr] 
- \overline{{\bf Q}}_{n+k}\Bigl[\frac 1{\beta^H}\sum_{i=1}^{\text{Bin}(W_{\infty},p_{\infty})}\sum_{j=1}^{G(p_2(H))-1} T_{exc}^{(i,j)}\geq t\Bigr]
\to 0
\end{equation}

Let us introduce
\begin{align*}
A(\xi)=&\Bigl\{\text{for all } i\in [1,\text{Bin}(W_{\infty},p_{\infty})],\\ 
&\ \ \frac 1{\beta^H}\sum_{j=1}^{G^{(i)}(p_2(H))-1} T_{exc}^{(i,j)}\in \Bigl[(1-\xi)\frac{S_{\infty}}{1-\beta^{-1}} {\bf e}_i,(1+\xi)\frac{S_{\infty}}{1-\beta^{-1}} {\bf e}_i\Bigr] \Bigr\},
\end{align*}
where $({\bf e}_i)_{i\geq 1}$ is a sequence of i.i.d.~exponential random variables of parameter 1 which satisfy
\[
G^{(i)}(p_2(H))-1=\Bigl\lfloor\frac{-1}{\ln(1-p_2(H))}{\bf e}_i\Bigr\rfloor.
\]

We have, denoting $o_1(1)$ the left hand side of~(\ref{approx_234})
\begin{align*}
 \overline{{\bf Q}}_{n+k}[A(\xi)] & \geq  \sum_{i \geq 0} \PR[\text{Bin}(W_{\infty},p_{\infty})=i](1-o_1(1))^i \\
&\geq  \sum_{i \geq 0} \PR[\text{Bin}(W_{\infty},p_{\infty})=i](1-i o_1(1)) \\
&=  1-\ES[W_{\infty}]o_1(1) \to 1 \quad \text{for } n \to \infty\, .
\end{align*}
Hence, for any $\xi>0$, we get
\[
\overline{{\bf Q}}_{n+k}\Bigl[\frac 1{\beta^H}\sum_{i=1}^{\text{Bin}(W_{n},p_1(H))}\sum_{j=1}^{G(p_2(H))-1} 
T_{exc}^{(i,j)}\geq t\Bigr]- 
{\PR}\Bigl[\frac{S_{\infty}}{1-\beta^{-1}}\sum_{i=1}^{\text{Bin}(W_{\infty},p_{\infty})}{\bf e}_i\geq \frac t{1+\xi}\Bigr]
\to 0 
\]
and
\[
\overline{{\bf Q}}_{n+k}\Bigl[\frac 1{\beta^H}\sum_{i=1}^{\text{Bin}(W_{n},p_1(H))}\sum_{j=1}^{G(p_2(H))-1} T_{exc}^{(i,j)}\geq t \Bigr] 
- {\PR}\Bigl[\frac{S_{\infty}}{1-\beta^{-1}}\sum_{i=1}^{\text{Bin}(W_{\infty},p_{\infty})}{\bf e}_i\geq \frac t{1-\xi}\Bigr] \to 0
\]

Concluding by using the two previous equations with $\xi$ going to $0$, we have the following convergence in law:
\[
\overline{\mathcal{Z}}_{n}^k=\frac 1{\beta^H}\sum_{i=1}^{\text{Bin}(W_n,p_1(H))}\sum_{j=1}^{G(p_2(H))-1} T_{exc}^{(i,j)} \xrightarrow{d} \frac{S_{\infty}}{1-\beta^{-1}}\sum_{i=1}^{\text{Bin}(W_{\infty},p_{\infty})}{\bf e}_i,
\]
where we recall that $\overline{\mathcal{Z}}_{n}^k$ has the law of $\chi_1^{*}(n)/\beta^{H}$ under ${\overline {\bf Q}}_{n+k}$
and the ${\bf e}_i$ are i.i.d.~exponential random variables of parameter 1. This shows the first part of Proposition~\ref{cvg_distribution}.

Now let us prove the stochastic domination part. First notice that
\[
\text{Bin}(W_n,p_1(H))\preceq G(p_{\infty}/3) \text{ and } 
E^{\omega}_\delta[T_1] \preceq T_{exc}^{\infty},
\]
where $T_{exc}^{\infty}$ is distributed as the return time to $\delta$, starting from $\delta$, on an infinite trap. Hence for $k> -n$
\[
\overline{\mathcal{Z}}_{n}^k \preceq \frac 1{\beta^{h^0_{n+k}}}\sum_{i=1}^{G(p_{\infty}/3)}\sum_{j=1}^{G(p_2(h^0_{n+k}))} T_{exc}^{\infty,(i,j)},
\]
where $(T_{exc}^{\infty,(i,j)})_{i, j \geq 1}$ are i.i.d.~copies of $T_{exc}^{\infty}$. Now recalling that $\sum_{i=1}^{G(a)} G(b)^{(i)}$ has the same law as $G(ab)$, where all geometric random variables are independent, and using the fact that
\[
\beta^{h_{n+k}^0} \geq c E[G(p_{\infty}p_2(h^0_{n+k})/3)],
\]
for some $c= c(\beta) >0$, we get
\[
\overline{\mathcal{Z}}_{n}^k \preceq \frac {C} {E[G(p_{\infty}p_2(h^0_{n+k})/3)]}\sum_{i=1}^{G(p_{\infty}p_2(h^0_{n+k})/3)}T_{exc}^{\infty, (i)}.
\]
Now, we prove the following technical lemma.
\begin{lemma}
Let $(X_i)_{i\geq 0}$ a sequence of i.i.d.~non-negative random variables such that $E[X_1]<\infty$ and set $Y_i:=(X_1+\cdots+X_i)/i$. Then there exists a random variable $Y_{\text{sup}}$ such that
\[
\text{for all $i\geq 0$,} \qquad Y_i \preceq Y_{\text{sup}},
\]
and $E[Y_{\text{sup}}^{1-\epsilon}]<\infty$ for all $\epsilon>0$.
\end{lemma}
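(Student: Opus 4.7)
The plan is to take $Y_{\text{sup}}:=\sup_{n\geq 1} Y_n$, so that $Y_i\leq Y_{\text{sup}}$ pointwise (in particular $Y_i\preceq Y_{\text{sup}}$) for every $i\geq 1$. It then suffices to prove the weak-$L^1$ tail bound
\[
P[Y_{\text{sup}}>t]\leq\frac{E[X_1]}{t},\qquad t>0,
\]
from which the moment estimate follows via the standard tail-integration formula: for any $\epsilon\in(0,1)$,
\[
E[Y_{\text{sup}}^{1-\epsilon}]=(1-\epsilon)\int_0^\infty t^{-\epsilon}P[Y_{\text{sup}}>t]\,dt\leq\int_0^1 t^{-\epsilon}\,dt+E[X_1]\int_1^\infty t^{-1-\epsilon}\,dt<\infty.
\]

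The tail bound comes from the classical reverse-martingale structure of empirical averages. Set $S_n:=X_1+\cdots+X_n$ and $\mathcal{G}_n:=\sigma(S_n,S_{n+1},\ldots)=\sigma(S_n,X_{n+1},X_{n+2},\ldots)$. Exchangeability of $(X_1,\ldots,X_{n+1})$ conditionally on $\mathcal{G}_{n+1}$ gives $E[X_i\mid\mathcal{G}_{n+1}]=S_{n+1}/(n+1)$ for each $i\leq n+1$, whence
\[
E[Y_n\mid\mathcal{G}_{n+1}]=\frac{1}{n}\sum_{i=1}^n E[X_i\mid\mathcal{G}_{n+1}]=\frac{S_{n+1}}{n+1}=Y_{n+1}.
\]
Thus $(Y_n)_{n\geq 1}$ is a reverse martingale with respect to the decreasing filtration $(\mathcal{G}_n)_{n\geq 1}$. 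Reading the sequence backwards on any finite window $\{1,\ldots,N\}$ turns $(Y_{N-k})_{0\leq k\leq N-1}$ into a forward non-negative martingale, to which Doob's maximal inequality applies and yields $P[\max_{1\leq n\leq N}Y_n>t]\leq E[Y_1]/t=E[X_1]/t$. Monotone convergence as $N\to\infty$ delivers the desired bound on $P[Y_{\text{sup}}>t]$.

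No serious obstacle is expected: the identification of $(Y_n)$ as a reverse martingale via exchangeability is classical, and the rest is a routine Doob-type maximal inequality combined with a tail integration. The one point worth flagging is that one cannot hope for $Y_{\text{sup}}\in L^1$ under the sole assumption $E[X_1]<\infty$ (the harmonic-type obstruction coming from the tail $c/t$), which is precisely why the statement is restricted to moments of order strictly less than $1$, and why the weak-$L^1$ bound rather than an $L^1$ bound is the right tool here.
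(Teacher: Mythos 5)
Your proof is correct, but it takes a genuinely different and in fact stronger route than the paper. The paper's argument is a one-liner: Markov's inequality gives $P[Y_i>t]\leq E[X_1]/t$ for each fixed $i$, and one then simply \emph{defines} $Y_{\text{sup}}$ as an abstract random variable with tail $P[Y_{\text{sup}}>t]=\min(1,E[X_1]/t)$; this Pareto-type variable stochastically dominates each $Y_i$ separately, which is all the lemma (and its later application to the averages of the $T_{exc}^{\infty,(i)}$) requires, and the moment computation is the same tail integration you perform. You instead take $Y_{\text{sup}}=\sup_n Y_n$ to be the actual pathwise supremum and establish the weak-$L^1$ bound for it via the reverse-martingale structure of empirical means and Doob's maximal inequality applied to the time-reversed finite windows; all the steps there (the conditional exchangeability identity $E[X_i\mid\mathcal{G}_{n+1}]=S_{n+1}/(n+1)$, the maximal inequality with the terminal term $E[Y_1]=E[X_1]$, and the monotone passage $N\to\infty$) are sound. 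What your approach buys is a single random variable on the same probability space dominating all the $Y_i$ simultaneously and pathwise, at the cost of invoking the reverse-martingale machinery; the paper's approach buys brevity by observing that simultaneous domination is not needed, only a common dominating law. Your closing remark that one cannot hope for $Y_{\text{sup}}\in L^1$ is also the right explanation for why the lemma stops at moments of order $1-\epsilon$.
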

\begin{proof}
Using Chebyshev's inequality we get that for any $i\geq 0$,
\[
\text{for all $t \geq 0$}, \qquad P[Y_i> t]\leq \frac 1t E[X_1].
\]
If we choose $Y_{\text{sup}}$ such that $P[Y_{\text{sup}}> t]=\min(1,E[X_1]/t)$ for $x\geq 0$, then $Y_{\text{sup}}$ stochastically dominates all $Y_n$ and has a finite $(1-\epsilon)$-th moment for all $\epsilon>0$. 
\end{proof}  

Now we apply this Lemma to the random variables $T_{ext}^{\infty,(i)}$ which are integrable under $\PR$ and we get a certain random variable 
$T_{\text{sup}}$. We add to our probability spaces a copy of $T_{\text{sup}}$ which is independent of all other random variables. Then for any $t\geq 0$, 
\begin{align*}
\PR [\overline{\mathcal{Z}}_{n}^k  \geq t]& \leq  \PR\Bigl[\frac C{E[G(p_{\infty}p_2(h^0_{n+k})/3)]} \sum_{i=1}^{G(p_{\infty}p_2(h^0_{n+k})/3)} T_{exc}^{\infty,(i)}\geq t \Bigr] \\
                        & \leq   \sum_{k\geq 0} \PR [G(p_{\infty}p_2(h^0_{n+k})/3)=k] \PR \Bigl[\frac C{E[G(p_{\infty}p_2(h^0_{n+k})/3)]} \sum_{i=1}^{k} T_{exc}^{\infty,(i)}\geq x \Bigr] \\
                        & \leq   \sum_{k\geq 0} \PR [G(p_{\infty}p_2(h^0_{n+k})/3)=k] \PR\Bigl[C\frac k{E[G(p_{\infty}p_2(h^0_{n+k})/3)]} T_{\text{sup}}\geq t \Bigr]\\
                        &\leq \PR \Bigl[C\frac {G(p_{\infty}p_2(h^0_{n+k})/3)}
{E[G(p_{\infty}p_2(h^0_{n+k})/3)]} T_{\text{sup}}\geq t \Bigr],
\end{align*}
and since $p_{\infty}p_2(h^0_{n+k})/3<1/3$, we can use the fact that for any $a<1/3$ we have $G(a) /E[G(a)] \preceq 3/2{\bf e}$. This shows that
\[
\text{for all $n\geq 0$, and $k > -n$, } \qquad \overline{\mathcal{Z}}_{n}^k  \preceq C {\bf e} T_{\text{sup}},
\]
where ${\bf e}$ and $T_{\text{sup}}$ are independent, so that the right-hand side has finite $(1-\epsilon)$-th moment for all $\epsilon>0$. This finishes the proof of the second part in Proposition~\ref{cvg_distribution}.\qed

\section{Sums of i.i.d.~random variables}
\label{sect_sum_iid}

This section is completely self-contained and the notations used here are not related to those used previously.

Set $\beta>1$ and let $(X_i)_{i\geq 0}$ be a sequence of i.i.d.~integer-valued non-negative random variables such that
\begin{equation}
\label{tail_X}
P[X_1 \geq n]\sim C_X \beta^{-\gamma n},
\end{equation}
for $C_X\in(0,\infty)$ and $\gamma>0$.

Let $(X_i^{(l)})_{i\geq 0}$ be a sequence of i.i.d.~integer-valued non negative random variables with the law of $X_i$ conditioned on $X_i \geq f(l)$, where $f:\N \to \N$ is such that $l-f(l) \to \infty$.

Let $(Z_i^{(l)})_{i\geq 0, l\geq 0}$ be another sequence of i.i.d.~non-negative random variables and let $Z_i^{(l),(k)}$ have the law of $Z_i^{(l)}$ under $P[~\cdot \mid X_i^{(l)}=l+k]$, if this last probability is well defined, and as $Z_i^{(l),(k)}= 0$ otherwise. Define
\begin{equation}\label{fkaelldef}
\text{for $k\in \Z,\ l \geq 0$,} \qquad \overline{F}_k^{(l)}(x):=P[Z_i^{(l),(k)}>x],
\end{equation}

We introduce the following assumptions.
\begin{enumerate}
\item There exists a certain random variable $Z_{\infty}$ such that 
\[
\text{for all $k \in \Z$ and $l\geq 0$,}\qquad  Z_i^{(l),(k)} \xrightarrow{d} Z_{\infty}.
\]
\item There exists a random variable $Z_{\text{sup}}$ such that
\[
\text{for all $l \geq 0,\ k\geq -(l-f(l))$ and $i\geq 0$,} \qquad Z_i^{(l),(k)} \preceq Z_{\text{sup}},
\]
and $E[Z_{\text{sup}}^{\gamma+\epsilon}]<\infty$ for some $\epsilon>0$.
\end{enumerate}

Moreover set
\[
Y_i^{(l)}=Z_i^{(l)} \beta^{X_i^{(l)}} \text{ and } S_n^{(l)}=\sum_{i=1}^n Y_i^{(l)},
\]
and for $\lambda > 0$, $(\lambda_l)_{l\geq 0}$ converging to $\lambda$ and $l\in \N$, define
\[
\begin{array}{lll}
N_l^{(\lambda)}&=&\left\lfloor \lambda_l^{\gamma} \beta^{\gamma (l-f(l))}\right \rfloor,\\
K_l^{(\lambda)}&=&\lambda \beta^l.
\end{array}
\]

Finally we denote by $\overline{F}_{\infty}(x)=P[Z_{\infty} > x]$ the tail function of $Z_{\infty}$.

\begin{theorem}
\label{sum_iid}
Suppose that $\gamma<1$ and Assumptions $(1)$ and $(2)$ hold true. Then we have
\[
\text{for all $\lambda > 0 $ and $(\lambda_l)_{l\geq 0}$ going to $\lambda$}, \qquad \frac{S_{N_l^{(\lambda)}}^{(l)}}{K_l^{(\lambda)}} \to \IG(d_{\lambda},0,\LR_{\lambda}),
\]
where $\IG$ is an infinitely divisible law. The L\'evy spectral function $\LR_{\lambda}$ satisfies
\begin{equation}
\label{th_spectral_1}
\text{for all $\lambda >0$ and $x\in \R$,} \qquad \LR_{\lambda}(x)=\lambda^{\gamma} \LR_1(\lambda x).
\end{equation}
and
\begin{equation}
\label{th_spectral_2}
\LR_1(x)=\left\{ \begin{array}{ll}
                         0 & \text{ if $x<0$,}\\
                         \displaystyle{-(1-\beta^{-\gamma})\sum_{k\in \Z}\beta^{\gamma k}\overline{F}_{\infty}(x \beta^k)} & \text{ if $x>0$.}
                         \end{array} \right.
\end{equation}
In particular, $\IG(d_{\lambda},0,\LR_{\lambda})$ is continuous.
Moreover, $d_\lambda$ is given by
\begin{equation}
\label{th_drift}
d_{\lambda}=\lambda^{1+\gamma}(1-\beta^{-\gamma})\sum_{k\in \Z}\beta^{(1+\gamma)k}E\Bigl[\frac {Z_{\infty}}{(\lambda\beta^k)^2+Z_{\infty}^2}\Bigr].
\end{equation}
\end{theorem}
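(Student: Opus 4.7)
The plan is to apply the classical convergence criterion for row sums of uniformly-asymptotically-negligible triangular arrays of i.i.d.~random variables to an infinitely divisible law without Gaussian part (see \cite{Petrov}, Chapter~IV). Setting
\[
\eta_{l,i}:=Y_i^{(l)}/K_l^{(\lambda)}=Z_i^{(l)}\beta^{X_i^{(l)}-l}/\lambda,\qquad i=1,\ldots,N_l^{(\lambda)},
\]
the criterion reduces to three verifications: (a) convergence of the tail, $N_l^{(\lambda)}P[\eta_{l,1}>x]\to -\LR_\lambda(x)$ at every continuity point $x>0$; (b) the infinitesimal-Gaussian condition $\lim_{\tau\to 0}\limsup_l N_l^{(\lambda)}E[\eta_{l,1}^2\,\1{\eta_{l,1}\leq\tau}]=0$; (c) convergence of the centering $N_l^{(\lambda)}E[\eta_{l,1}/(1+\eta_{l,1}^2)]$ to $d_\lambda$. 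The uniform asymptotic negligibility follows from the scaling together with $l-f(l)\to\infty$.

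The common tool is a decomposition according to the integer value $k=X_1^{(l)}-l$. Combining (\ref{tail_X}) with the definition of $N_l^{(\lambda)}$, for every fixed $k\in\Z$,
\[
N_l^{(\lambda)}P[X_1^{(l)}=l+k]\,\longrightarrow\,\lambda^\gamma(1-\beta^{-\gamma})\beta^{-\gamma k},
\]
while uniformly $P[X_1^{(l)}=l+k]\leq C\beta^{-\gamma(l+k-f(l))}$. For (a) I would write
\[
N_l^{(\lambda)}P[\eta_{l,1}>x]=\sum_{k\geq f(l)-l}N_l^{(\lambda)}P[X_1^{(l)}=l+k]\,\overline F_k^{(l)}(x\lambda\beta^{-k}),
\]
observe via Assumption~(1) that each summand tends to $\lambda^\gamma(1-\beta^{-\gamma})\beta^{-\gamma k}\overline F_\infty(x\lambda\beta^{-k})$, and pass to the limit inside the sum by dominated convergence, using Assumption~(2) to produce the summable majorant $C\beta^{-\gamma k}\min(1,(x\lambda\beta^{-k})^{-(\gamma+\epsilon)})$. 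The reindexing $k\mapsto -k$ then yields (\ref{th_spectral_1})--(\ref{th_spectral_2}).

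For (b) I use the pointwise inequality $u^2\,\1{u\leq\tau}\leq \tau^{2-\gamma-\epsilon}u^{\gamma+\epsilon}\,\1{u\leq\tau}$ and the tail representation of the $(\gamma+\epsilon)$-th moment to reduce matters to the uniform-in-$l$ bound $N_l^{(\lambda)}P[\eta_{l,1}>u]\leq Cu^{-\gamma}$, which follows from the argument of~(a) run with uniform estimates in place of the pointwise limits. Since $\gamma<1$, this yields a contribution of order $\tau^{2-\gamma}$ that vanishes as $\tau\to 0$. Step~(c) exploits the same decomposition together with $|y/(1+y^2)|\leq\min(y,y^{-1})$, Assumption~(2) and dominated convergence to recover (\ref{th_drift}) after the same reindexing; continuity of the limit law is then immediate from the blow-up $-\LR_\lambda(x)\sim cx^{-\gamma}$ near~$0$, which makes the L\'evy measure infinite.

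The main obstacle is the uniform control of the two-sided infinite sum over $k\in\Z$: for $k\to +\infty$ the factor $\beta^{-\gamma k}$ provides decay but the tail $\overline F_\infty$ is only bounded by~$1$, while for $k\to -\infty$ one must rely on the polynomial tail of $\overline F_\infty$ to defeat the diverging factor $\beta^{-\gamma k}$. The hypothesis $E[Z_{\text{sup}}^{\gamma+\epsilon}]<\infty$ supplies precisely the extra decay $\beta^{-\epsilon k}$ needed to close the argument in both regimes, and $\gamma<1$ is essential both for the effectiveness of the second-moment control in~(b) and for the integrability driving~(c).
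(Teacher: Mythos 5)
Your proposal is correct and follows essentially the same route as the paper: the same triangular-array criterion from \cite{Petrov}, the same decomposition over $k=X_1^{(l)}-l$ with the limit $N_l^{(\lambda)}P[X_1^{(l)}=l+k]\to\lambda^{\gamma}(1-\beta^{-\gamma})\beta^{-\gamma k}$, and the same dominated-convergence argument in which Assumption (2) supplies the extra $\beta^{\epsilon k}$ decay for $k\to-\infty$. The only (harmless) variation is in the centering step, where you compute $N_l^{(\lambda)}E[\eta_{l,1}/(1+\eta_{l,1}^2)]$ directly instead of the paper's truncated mean $N_l^{(\lambda)}E[\eta_{l,1}\1{\eta_{l,1}\leq\tau}]$ plus the compensating integrals against $d\LR_\lambda$; the two formulations of the criterion are equivalent and lead to the same $d_\lambda$.
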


The fact that the quantities appearing above are well defined will be established in the course of the proof.

In order to prove Theorem~\ref{sum_iid} we will apply  Theorem 4 in~\cite{BAMR}, which is itself a 
consequence of Theorem IV.6 (p.~77) in~\cite{Petrov}.
\begin{theorem}
Let $n(t):[0,\infty)\to\N$ and for each $t$ let $\{Y_k(t):1\leq k \leq n(t)\}$ be a sequence of independent identically distributed random variables. 
Assume that for every $\epsilon>0$, it is true that 
\begin{equation}
\label{th_petrov_cond_tech}
\lim_{t\to \infty} P\left[Y_1(t)>\epsilon\right]=0.
\end{equation}
Now let $\LR(x):\R \setminus \{0\} \to \R$ be a L\'evy spectral function, $d\in \R$ and $\sigma>0$. Then the following statements are equivalent:
\begin{itemize}
\item[(i)]
\[
\lim_{t\to \infty} \sum_{k=1}^{n(t)} Y_k(t)\xrightarrow{d} X_{d,\sigma, \LR} \quad\text{for } t \to\infty
\]
where $X_{d,\sigma, \LR}$ has law $\IG(d ,\sigma,\LR)$.
\item[(ii)] Define for $\tau>0$ the random variable $Z_{\tau}(t):=Y_1(t) \1{\abs{Y_1(t)}\leq \tau}$. Then if $x$ is a continuity point of $\LR$,
\[
\LR(x)= \left\{\begin{array}{ll}
                    \lim_{t\to \infty} n(t) P\left[Y_1(t)\leq x\right], & \text{for } x<0, \\
                   - \lim_{t\to \infty} n(t) P\left[Y_1(t) >  x\right], & \text{for } x>0, \\
                    \end{array}\right.
\]
\[
\sigma^2=\lim_{\tau \to 0} \limsup_{t \to \infty}\left( n(t) \text{Var}(Z_{\tau}(t))\right),
\]
and for any $\tau>0$ which is a continuity point of $\LR(x)$,
\[
d=\lim_{n\to \infty} n(t) E\left[Z_{\tau}(t)\right]+\int_{\abs{x}>\tau} \frac x {1+x^2} d\LR(x)-\int_{\tau\geq \abs{x}>0} \frac{x^3}{1+x^2} d\LR(x).
\]
\end{itemize}
\end{theorem}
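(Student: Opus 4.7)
The statement is the classical equivalence, for triangular arrays of i.i.d.~summands satisfying the asymptotic smallness condition~(\ref{th_petrov_cond_tech}), between convergence of the row sums to a given infinitely divisible law $\IG(d,\sigma,\LR)$ and convergence of the three associated characteristics. The natural approach is via characteristic functions and the Lévy-Khintchine representation. Set $\phi_t(u) := \ES[e^{iuY_1(t)}]$; then the characteristic function of $S(t) := \sum_{k=1}^{n(t)} Y_k(t)$ is $\phi_t(u)^{n(t)}$. The first observation is that~(\ref{th_petrov_cond_tech}) forces $\phi_t(u) - 1 \to 0$ uniformly on compact subsets of $\R$, so that $\phi_t(u)^{n(t)} = \exp\bigl(n(t)(\phi_t(u)-1) + o(1)\bigr)$ whenever $n(t)(\phi_t(u)-1)$ remains bounded in $t$.

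The heart of the argument is a truncation of $n(t)(\phi_t(u) - 1) = n(t)\int (e^{iux}-1)\,dF_t(x)$, where $F_t$ is the law of $Y_1(t)$. Fixing $\tau>0$ a continuity point of $\LR$, decompose
\begin{equation*}
n(t)(\phi_t(u) - 1) = iu\, n(t)\ES[Z_\tau(t)] + n(t)\int_{\abs{x}\leq\tau}(e^{iux}-1-iux)\,dF_t(x) + n(t)\int_{\abs{x}>\tau}(e^{iux}-1)\,dF_t(x).
\end{equation*}
Under condition (ii), the third piece converges to $\int_{\abs{x}>\tau}(e^{iux}-1)\,d\LR(x)$ by vague convergence of the measures $n(t)\,dF_t$ on $\R\setminus\{0\}$; this is exactly what the convergence of $n(t)\PR[Y_1(t)\leq x]$ and $n(t)\PR[Y_1(t) > x]$ to the two branches of $\LR$ encodes. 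The real part of the second piece, via the expansion $e^{iux}-1-iux = -u^2 x^2/2 + O(u^3 x^3)$, is asymptotic to $-u^2\, n(t)\,\mathrm{Var}(Z_\tau(t))/2$ up to corrections vanishing as $\tau\downarrow 0$; taking $\lim_{\tau\to 0}\limsup_{t\to\infty}$ produces the Gaussian coefficient $-\sigma^2 u^2/2$. The first piece, after subtracting the compensator $\int x/(1+x^2)\,d\LR(x)$ as prescribed by the formula for $d$, yields the linear contribution $idu$.

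Passing to the limit $t\to\infty$ and then $\tau\downarrow 0$, the three pieces assemble into the Lévy-Khintchine exponent
\begin{equation*}
idu - \frac{\sigma^2 u^2}{2} + \int\Bigl(e^{iux}-1-\frac{iux}{1+x^2}\Bigr)\,d\LR(x),
\end{equation*}
proving (ii)$\Rightarrow$(i). For the converse, the convergence of $S(t)$ to an infinitely divisible law together with~(\ref{th_petrov_cond_tech}) implies that the family $\{n(t)\,dF_t\}_t$, restricted to $\{\abs{x}>\delta\}$, is tight for every $\delta>0$; one extracts subsequential vague limits and applies uniqueness of the Lévy-Khintchine representation to identify each such limit with $\LR$, and analogously $\sigma^2$ and $d$ with their prescribed formulas.

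The main obstacle is the delicate double limit in the second piece: the variance contribution $n(t)\,\mathrm{Var}(Z_\tau(t))$ depends jointly on $\tau$ and $t$, and only the combined $\lim_{\tau\to 0}\limsup_{t\to\infty}$ stabilizes to a genuine constant $\sigma^2$. One must then show that the residual error incurred by replacing the symmetric truncation $x\1{\abs{x}\leq\tau}$ by the canonical compensator $x/(1+x^2)$ is absorbed exactly by the corresponding adjustment in the definition of $d$; this is purely algebraic but requires careful bookkeeping. Since the statement is quoted directly from Petrov's Theorem IV.6, one may also simply cite the book; the outline above reproduces its classical characteristic-function proof.
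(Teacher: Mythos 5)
The paper does not prove this theorem at all: it is quoted verbatim as a consequence of Theorem~4 in~\cite{BAMR}, which in turn follows from Theorem~IV.6 (p.~77) in~\cite{Petrov}, and the authors simply cite these references. Your sketch correctly reproduces the classical characteristic-function/L\'evy--Khintchine argument that underlies Petrov's theorem, and you rightly observe at the end that citing the book suffices, which is precisely the paper's own treatment.
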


The condition (\ref{th_petrov_cond_tech}) is verified in the course of the proof, in our case
$n(t)$ goes to infinity.

\subsection{Computation of the L\'evy spectral function}

Fix $\lambda \in [1, \beta)$ and assume that $x>0$ is a continuity point of $\LR_{\lambda}$. We want to show that
\begin{equation}
\label{spec_result}
-\lim_{l\to \infty} N_l^{(\lambda)}P\left[\frac{Y_1^{(l)}}{K_l^{(\lambda)}}>x\right]=\LR_{\lambda}(x).
\end{equation}

The discontinuity points of $\LR_{\lambda}$ are exactly $\mathcal{C}_{\lambda}=\{(\beta^{k} y_n)/\lambda,\ k\in \Z,\ n\in\N\}$ where $\{y_n,n\in \N\}$ are the discontinuity points of $\overline{F}_{\infty}$ (these sets are possibly empty). 

Let us introduce
\begin{equation}
\label{def_an}
\text{for $k\in \Z$,} \qquad a_k^{(l)}:=P[X_1^{(l)}\geq l+k].
\end{equation}

Since $N_l^{(\lambda)}\sim (\lambda \beta^{l-f(l)})^{\gamma} $ , we can write, recalling (\ref{fkaelldef})
\begin{align*}
               &\beta^{\gamma (l-f(l))}P\Bigl(\frac{Y_1^{(l)}}{K_l^{(\lambda)}}>x\Bigr)\\
              =&\sum_{k \in \Z} \1{k \geq -(l-f(l))} \overline{F}_{k}^{(l)}(\lambda x \beta^{-k})\beta^{\gamma (l-f(l))}\Bigl(a_{k}^{(l)}-a_{k+1}^{(l)}\Bigr).
\end{align*}

Now recalling~(\ref{tail_X}) and~(\ref{def_an}), we see that for $l \to \infty$,
\begin{equation}
\label{tail_an}
\beta^{\gamma (l-f(l))}a_{k}^{(l)} \to \beta^{-\gamma k}\, .
\end{equation}
using $l-f(l)\to \infty$, the fact that $\lambda x\beta^k$ is a continuity point of $\overline{F}_{\infty}$ (because $x>0$ is a continuity point of $\LR_{\lambda}$) for any $k$ and Assumption $(1)$, we see that for all $k\in \Z$
\begin{align}
\label{cvg_term}
& \1{k \geq -(l-f(l))} \overline{F}_{k}^{(l)}(\lambda x \beta^{-k})\beta^{\gamma (l-f(l))}\Bigr(a_{k}^{(l)}-a_{k+1}^{(l)}\Bigl) \nonumber \\
\to  & \overline{F}_{\infty}(\lambda x \beta^{-k}) \beta^{-\gamma k} (1-\beta^{-\gamma})\quad \text{for } l \to \infty\, .
\end{align}
In order to exchange limit and summation, we need to show that the terms of the sum are dominated by a function which does not depend on $l$ and is summable. Recalling Assumption $(2)$ and using~(\ref{tail_X}) we see that $\beta^{\gamma (l-f(l))}a_{k}^{(l)}\leq C_1 \beta^{-\gamma k}$ and
\begin{align*}
& \sum_{k \in \Z} \1{k \geq -(l-f(l))} \overline{F}_{k}^{(l)}(\lambda x \beta^{-k})\beta^{\gamma (l-f(l))}\Bigl(a_{k}^{(l)}-a_{k+1}^{(l)}\Bigr) \\ \leq & C \sum_{k\in \Z} \overline{F}_{\text{sup}}(\lambda x \beta^{-k}) \beta^{-\gamma k},
\end{align*}
where $\overline{F}_{\text{sup}}(x)=P[Z_{\text{sup}}>x]$. This last sum converges clearly for $k \to \infty$, and to show that it converges for 
$k \to -\infty$ we simply notice that for any $y>0$
\begin{align*}
\sum_{k > 0} E[\1{Z_{\text{sup}}> y \beta^{k}}]\beta^{\gamma k}&=E\Bigl[\sum_{0 < k \leq \left \lfloor \ln (Z_{\text{sup}}/y)/\ln \beta \right\rfloor} \beta^{\gamma k} \Bigr] \\
                &\leq (1-\beta^{-\gamma})^{-1} E[\beta^{\gamma \ln(Z_{\text{sup}}/y)/\ln \beta}] <\infty,
\end{align*}
since we assume that $E[Z_{\text{sup}}^{\gamma+\epsilon}]<\infty$.

Hence we can exchange limit and sum. Using~(\ref{cvg_term}) and the fact that $l-f(l) \to \infty$, we get
\[
-\lim_{l\to \infty} N_l^{(\lambda)}P\left[\frac{Y_1^{(l)}}{K_l^{(\lambda)}}>x\right]=-\lambda^{\gamma}(1-\beta^{-\gamma})\sum_{k\in \Z} \overline{F}_{\infty}(\lambda x \beta^{k}) \beta^{\gamma k},
\]
and, taking into account (\ref{th_spectral_1}), this proves~(\ref{spec_result}).

\subsection{Computation of $d_\lambda$}

Fix $\lambda \in [1,\beta)$. Since the integral $\int_0^{\tau} x d\LR_{\lambda}$ is well defined, it suffices to show that for all $\tau\in \mathcal{C}_{\lambda}$, $\tau>0$
\begin{equation}
\label{drift_result}
d_{\lambda}=\lim_{l\to \infty} \frac{N_l^{(\lambda)}}{K_l^{(\lambda)}}E\Bigl[Y_1^{(l)}\1{Y_1^{(l)}<\tau K_l^{(\lambda)}}\Bigr]-\int_0^{\tau}xd\LR_{\lambda}+\int_0^{\infty} \frac x {1+x^2}d\LR_{\lambda}.
\end{equation}

First let us notice that $N_l^{(\lambda)}/K_l^{(\lambda)} \sim (\lambda \beta^l)^{\gamma-1} \beta^{-\gamma f(l)}$. We introduce
\begin{equation}
\label{drift_def_W1}
\text{for all $u>0$,} \qquad G_k^{(l)}(u)=E\Bigl[Z^{(l)}_1 \1{Z_1^{(l)}\leq u}|X_1^{(l)}=k+l\bigr].
\end{equation}

Considering the first term in (\ref{drift_result}), we compute
\begin{align*}
       &\beta^{(\gamma-1)l-\gamma f(l)} E\Bigl[Y_1^{(l)}\1{Y_1^{(l)}<\tau \lambda \beta^l}\Bigr]\\
      =&\sum_{k\in \Z} \1{k\geq-(l-f(l))} \Bigl[\Bigl(a_{k}^{(l)}-a_{k+1}^{(l)}\Bigr) \beta^{\gamma(l-f(l))}\Bigr] \beta^{k} G_{k}^{(l)}(\tau \lambda \beta^{-k}).
\end{align*}

Using $l-f(l)\to \infty$,~(\ref{tail_an}) and Assumption $(1)$, we see that for all $k \in \Z$ and $\tau \in \mathcal{C}_{\lambda}$,
\begin{align}
\label{drift_cvg_term}
& \1{k\geq-(l-f(l))} \Bigl[\Bigl(a_{k}^{(l)}-a_{k+1}^{(l)}\Bigr) \beta^{\gamma(l-f(l))}\Bigr] \beta^{k} G_{k}^{(l)}(\tau \lambda \beta^{-k})
\\ \nonumber
&\to   (1-\beta^{-\gamma}) \beta^{(1-\gamma)k} G_{\infty}(\tau \lambda \beta^{-k}),
\end{align}
where
\begin{equation}
\label{def_G}
G_{\infty}(x)=E[Z_{\infty}\1{Z_{\infty} \leq x}].
\end{equation}

Once again we need to show that we can exchange limit and sum, which amounts to find a summable dominating function which does not depend on $l$. Using the fact that for $u>0$
\[
G_{k}^{(l)}(u) \leq u \text{ and } \beta^{-(\gamma-1)k}G_{k}^{(l)}(u \beta^{-k})\leq  \beta^{\epsilon k}u^{1-\gamma-\epsilon} E[Z_{\text{sup}}^{\gamma+\epsilon}],
\]
(to see the second inequality, use $E[Y\1{Y\leq s}] \leq s^a E[Y^{1-a}\1{Y\leq s}]$ with $a = 1-\gamma- \epsilon$)),
we get that
\begin{align*}
    & \sum_{k\in \Z} \1{k\geq -(l-f(l))} \Bigl[\Bigl(a_{k}^{(l)}-a_{k+1}^{(l)}\Bigr) \beta^{\gamma(l-f(l))}\Bigr] \beta^{k} G_{k}^{(l)}(\tau \lambda \beta^{-k}) \\
\leq & C \Bigl(\tau\lambda\sum_{k \geq  0} \beta^{-\gamma k} + (\tau\lambda)^{1-\gamma-\epsilon} E[Z_{\text{sup}}^{\gamma+\epsilon}] \sum_{k < 0} \beta^{\epsilon k} \Bigr)<\infty,
\end{align*}
due to Assumption $(2)$. Hence recalling~(\ref{drift_cvg_term}), we get that for $\tau \in \mathcal{C}_{\lambda}$
\begin{equation}
\label{drift_first_order}
\lim_{l\to \infty} \frac{N_l^{(\lambda)}}{K_l^{(\lambda)}}E\Bigl[Y_1^{(l)}\1{Y_1^{(l)}<\tau K_l^{(\lambda)}}\Bigr]=\lambda^{\gamma-1} (1-\beta^{-\gamma})\sum_{k\in \Z}\beta^{k(\gamma-1)} G_{\infty}(\tau \lambda \beta^k).
\end{equation}

Furthermore, recalling~(\ref{th_spectral_1}) and~(\ref{th_spectral_2}), we get for $\tau \in \mathcal{C}_{\lambda}$
\begin{align*}
\int_0^{\tau}xd\LR_{\lambda}
   &=\lambda^{\gamma}(1-\beta^{-\gamma})\int_{x\leq \tau} x\sum_{k\in \Z}\beta^{\gamma k}d(-\overline{F}_{\infty})(\lambda x\beta^k)\\
   &=\lambda^{\gamma-1}(1-\beta^{-\gamma})\sum_{k\in \Z}\beta^{(\gamma-1) k} \int_{\lambda x\beta^k \leq \lambda \tau\beta^k} \lambda x\beta^kd(-\overline{F}_{\infty})(\lambda x\beta^k)\\
   &=\lambda^{\gamma-1}(1-\beta^{-\gamma})\sum_{k\in \Z}\beta^{(\gamma-1)k}G_{\infty}(\tau\lambda\beta^k),
\end{align*}
and this term exactly compensates for~(\ref{drift_first_order}). Hence, we are left to compute in a similar fashion,
\begin{align*}
d_{\lambda}=&\int_0^{\infty}\frac x {1+x^2}d\LR_{\lambda}\\
          =&\lambda^{\gamma}(1-\beta^{-\gamma})\int_0^{\infty} \frac x{1+x^2} \sum_{k\in \Z}\beta^{\gamma k}d(-\overline{F}_{\infty})(\lambda x\beta^k) \\
          =&\lambda^{1+\gamma}(1-\beta^{-\gamma})\sum_{k\in \Z}\beta^{(1+\gamma) k} \int_0^{\infty}\frac{\lambda x\beta^k}{(\lambda\beta^{k})^2+(\lambda x\beta^k)^2}d(-\overline{F}_{\infty})(\lambda x\beta^k) \\
          =&\lambda^{1+\gamma}(1-\beta^{-\gamma})\sum_{k\in \Z}\beta^{(1+\gamma)k}E\Bigl[\frac {Z_{\infty}}{(\lambda\beta^k)^2+Z_{\infty}^2}\Bigr].\end{align*}        

This sum is finite since the terms in the sum can be bounded from above by $C_1(\lambda)\beta^{-\epsilon k} E[Z_{\text{sup}}^{\gamma+\epsilon}]$ and $C_2(\lambda) \beta^{\gamma k}$, where $C_1(\lambda) =\max_{x\geq 0} \left(x^{1-\gamma}/(\lambda^2 +x^2)\right)$ and $C_2(\lambda)=\max_{x\geq 0} x/(\lambda^2+x^2)$. The first upper bound is summable for $k \to \infty$, the other for $k\to -\infty$ and so $d_\lambda$ is well-defined.

\subsection{Computation of the variance}

We show that for any $\lambda \in [1,\beta)$ we have
\begin{equation}
\label{variance_result}
\sigma^2=\lim_{\tau \to 0}\limsup_{l\to \infty} \frac{N_l^{(\lambda)}}{(K_l^{(\lambda)})^2} \text{Var}\left(Y_1^{(l)} \1{Y_1^{(l)}\leq \tau K_l^{(\lambda)}}\right)=0.
\end{equation}

First, using~(\ref{drift_first_order}), let us notice that 
\begin{equation}
\label{variance_simplify}
\lim_{l\to \infty} \frac{N_l^{(\lambda)}}{(K_l^{(\lambda)})^2} E\left[Y_1^{(l)}\1{Y_1^{(l)}\leq \tau K_l^{(\lambda)}}\right]=0.
\end{equation}
Further, we have $N_l^{(\lambda)}/(K_l^{(\lambda)})^2 \sim (\lambda \beta^l)^{\gamma-2} \beta^{-\gamma f(l)}$. Define
\[
\text{for all $u\geq 0$,} \qquad H_k^{(l)}(u) =E\Bigl[\Bigl(Z_1^{(l)}\Bigr)^2 \1{Z_1^{(l)}\leq u} \mid  X_1^{(l)}=k+l\Bigr]. 
\]
We compute
\begin{align*}
       &\beta^{(\gamma-2)l-\gamma f(l)}E\Bigl[\Bigl(Y_1^{(l)}\Bigr)^2\1{Y_1^{(l)}<\tau \lambda \beta^l}\Bigr]\\
      =&\sum_{k\in \Z} \1{k\geq -(l-f(l))}\beta^{(\gamma-2)l-\gamma f(l)} \beta^{2(k+l)} H_{k}^{(l)}(\tau \lambda \beta^{-k})\Bigl(a_{k}^{(l)}-a_{k+1}^{(l)}\Bigr).
\end{align*}

By~(\ref{tail_X}) we have $a_{k}^{(l)} \beta^{\gamma(l-f(l))} \leq C_1 \beta^{-\gamma k}$, hence the terms of our sum are bounded above by $C_1 \beta^{(2-\gamma)k} H_{k}^{(l)}(\tau \lambda \beta^{-k})$. Note that $H_k^{(l)}(u) \leq u^2$, so that
\[
\beta^{(2-\gamma)k} H_{k}^{(l)}(\tau \lambda \beta^{-k})\leq \beta^{-\gamma k} (\tau\lambda)^2,
\]
which gives an upper bound for $k \geq 0$. On the other hand, Assumption $(2)$ implies that
\[
\beta^{(2-\gamma)k} H_{k}^{(l)}(\tau \lambda \beta^{-k}) \leq \beta^{\epsilon k} (\tau \lambda)^{2-\gamma-\epsilon} E[Z_{\text{sup}}^{\gamma +\epsilon}].
\]
These inequalities imply that
\[
\limsup_{l\to \infty} \frac{N_l^{(\lambda)}}{(K_l^{(\lambda)})^2} \text{Var}(Y_1^{(l)} \1{Y_1^{(l)}\leq \tau K_l^{(\lambda)}}) \leq C_2 \tau^{2-\gamma-\epsilon},
\]
where $C_2$ is finite and depends on $\epsilon$ and $\lambda$. Hence letting $\tau$ go to 0 yields the result, since in Assumption $(2)$ we can assume $\epsilon$ to be as small as we need in particular it can be chosen such that $2-\gamma-\epsilon>0$.

\section{Limit theorems}\label{lim-thm}

\subsection{Proof of Theorem \ref{subsequ} }\label{subseq}

Assume $\epsilon <\min (1/4,2\gamma/3)$. For $\lambda>0$, we will study the limit distributions of the hitting time properly renormalized along the subsequences defined as follows
\[
\text{for $k \in \N$,}\qquad n_{\lambda}(k)= \lfloor \lambda {\bf f}'(q)^{-k} \rfloor.
\]

First, recalling (\ref{decompo_chi_star_init}), using Proposition~\ref{cvg_distribution} and Lemma~\ref{tailH}, we can apply Theorem~\ref{sum_iid} to get
\begin{equation}
\label{cvg_chi_star}
\text{for any $(\lambda_l)_{l\geq 0}$ going to $\lambda$,}\qquad \frac 1{\lambda\beta^{k}}\sum_{i=1}^{\lfloor \lambda_{n_{\lambda}(k)}^{\gamma} \beta^{\gamma(k-f(k))} \rfloor} \chi_i^*(n_{\lambda}(k)) \xrightarrow{d} Y_{d_{\lambda},0,\LR_{\lambda}},
\end{equation}
where $f(k):=h_{n_{\lambda}(k)} = \lceil -(1-\epsilon)\ln (n_{\lambda}(k))/\ln {\bf f}'(q)\rceil $ and $ Y_{d_{\lambda},0,\LR_{\lambda}}$ is a random variable whose law $\IG(d_{\lambda},0,\LR_{\lambda})$ is the infinitely divisible law characterized by~(\ref{th_spectral_1}),~(\ref{th_spectral_2}) and~(\ref{th_drift}), where $\mathcal{Z}_{\infty}$ is given by~(\ref{Zdef}).

Using Proposition~\ref{th_neglectall},~(\ref{cvg_chi_star}) still holds if we replace $\chi_i^*(n)$ by $\chi_i(n)$. 

Recalling Proposition~\ref{approx_sum_iid} we have 
\[
\sum_{i=1}^{\lfloor(1+o_1(1)) \lambda \rho C_a {\bf f}'(q)^{-(k-h_{n_{\lambda}(k)})}\rfloor} \chi_i(n_{\lambda}(k)) \preceq \chi_{n_{\lambda}(k)} 
\preceq \sum_{i=1}^{\lfloor(1+o_2(1)) \lambda \rho C_a {\bf f}'(q)^{-(k-h_{n_{\lambda}(k)})}\rfloor}\chi_i(n_{\lambda}(k)),
\]
where 
\[
1+o_1(1)=(1-{\widetilde n}^ {-\epsilon/4})\frac{\rho_n}{\rho} \frac{n}{\lambda {\bf f}'(q)^{-k}}\frac{{\bf f}'(q)^{h_{\widetilde n}}}{{\bf f}'(q)^{h_{n}}} \text{ and } 1+o_2(1)=(1+2n^ {-\epsilon/4})\frac{\rho_n}{\rho} \frac{n}{\lambda {\bf f}'(q)^{-k}},
\]
writing $n$ for $n_{\lambda}(k)=\lfloor \lambda {\bf f}'(q)^{-k} \rfloor$ and ${\widetilde n} =n-(-2\ln n/\ln {\bf f}'(q))$.

Hence both sides of the previous equation, properly renormalized, converge in distribution to the same limit law, implying that (the law of) $\chi(n)$ converges to the same law as well. Recalling (\ref{kidef}), this yields for any $\lambda>0$ 
\[
\frac{\chi(n_{\lambda}(k))}{(\rho C_a n_{\lambda}(k))^{1/\gamma}} \xrightarrow{d} Y_{d_{(\rho C_a\lambda)^{1/\gamma}},0,\LR_{(\rho C_a\lambda)^{1/\gamma}}},
\]
where $Y_{d_{(\rho C_a\lambda)^{1/\gamma}},0,\LR_{(\rho C_a\lambda)^{1/\gamma}}}$ is a random variable with law $\IG(d_{(\rho C_a\lambda)^{1/\gamma}},0,\LR_{(\rho C_a\lambda)^{1/\gamma}})$
and we used that $\beta^{\gamma}=1/{\bf f}'(q)$.

Then by Proposition~\ref{cuts}, we get that
\[
\frac{ \Delta_{n_{\lambda}(k)}}{(\rho C_a n_{\lambda}(k))^{1/\gamma}} \xrightarrow{d} Y_{d_{(\rho C_a\lambda)^{1/\gamma}},0,\LR_{(\rho C_a\lambda)^{1/\gamma}}},
\]
which proves Theorem~\ref{subsequ}. \qed

We note for further reference that
\begin{equation}\label{contofL}
\IG(d_{\lambda},0,\LR_{\lambda}) \text{ is continuous }
\end{equation}
This follows from Theorem III.2 (p. 43) in \cite{Petrov}, since, due to (\ref{th_spectral_2}), $\lim\limits_{x \to 0}\LR_1(x) = -\infty$.

\subsection{Proof of Theorem \ref{non-conve} }\label{non-conv}

In order to prove Theorem~\ref{non-conve}, assume that\\
$(\Delta_n/n^{1/\gamma})_{n\geq 0}$ converges in law. It follows that all subsequential limits are the same, so that
\[
\text{for all $\lambda \in [1,\beta)$ and $x\in \R^{+}$,} \qquad \LR_1(x)=\lambda^{\gamma} \LR_1(\lambda x).
\]

Plugging in the values $\lambda=\beta^{1/3}$ and $x=\beta^{-2/3}$ gives
\begin{equation}
\label{nonconv0}
\sum_{k\in \Z} {\bf f}'(q)^{-k} P[\mathcal{Z}_{\infty} > \beta^{k-2/3}]={\bf f}'(q)^{-1/3}\sum_{k\in \Z} {\bf f}'(q)^{-k} P[\mathcal{Z}_{\infty} > \beta^{k-1/3}].
\end{equation}

We will show that for $\beta \to \infty$, the right hand side and the left hand side of (\ref{nonconv0}) have different limits.
First for $k\geq 1$, using Remark~\ref{Wdom}, we see that 
\begin{equation}
\label{nonconv1}
\PR[\mathcal{Z}_{\infty}>  \beta^{k- 1/3}] \leq \beta^{1/3 - k } \ES[\mathcal{Z}_{\infty}] \leq \beta^{1/3 - k} \ES[S_{\infty}]E[G(p_{\infty}/3)]=\beta^{-(k-1)}O(\beta^{-1/3}),
\end{equation}
for $\beta \to \infty$ where $O(\beta^{-1/3})=\beta^{-2/3} \ES[S_{\infty}]E[G(p_{\infty}/3)]$ does not depend on $k$ (recall Proposition \ref{Sinfty} to see that $\ES[S_{\infty}]$ is bounded in $\beta$) . In the same way,
\begin{equation}
\label{nonconvsec}
\PR[\mathcal{Z}_{\infty}> \beta^{k- 2/3}] = \beta^{-(k-1)} O(\beta^{-1/3}),
\end{equation}
for $O(\cdot)$ independent of $k\geq 1$.

Hence,
\begin{equation}
\label{posk}
\lim\limits_{\beta \to \infty} \sum_{k=1}^\infty {\bf f}'(q)^{-k} P[\mathcal{Z}_{\infty} > \beta^{k-2/3}] = 0 
= \lim\limits_{\beta \to \infty}\sum_{k=1}^\infty {\bf f}'(q)^{-k} P[\mathcal{Z}_{\infty} > \beta^{k-1/3}].
\end{equation}
For $k\leq 0$, we have 
\[
\PR[\mathcal{Z}_{\infty} > \beta^{k-1/3}] \leq \PR[\mathcal{Z}_{\infty}>0] \leq \PR[\text{Bin}(W_{\infty},p_{\infty})>0],
\]
further, since $S_{\infty} \geq 1$, we see that 
\[
\text{on $\{\mathcal{Z_{\infty}}>0\}$,} \qquad \mathcal{Z_{\infty}}\geq S_{\infty} {\bf e}_1 \geq {\bf e_1},
\]
where ${\bf e_1}$ is independent of the event $\{\mathcal{Z}_{\infty}>0\}=\{\text{Bin}(W_{\infty},p_{\infty})>0\}$. Hence
\begin{align*}
\PR[\mathcal{Z}_{\infty}> \beta^{k-1/3}]&=1-\PR[\mathcal{Z}_{\infty}\leq \beta^{k - 1/3 }]\\
                                       &\geq 1-\PR[\text{Bin}(W_{\infty},p_{\infty})=0]- P[{\bf e}_1 \leq \beta^{-1/3}] \\
                                       &= \PR[\text{Bin}(W_{\infty},p_{\infty})>0]+o(1),
\end{align*}
for $\beta\to \infty$ and hence
\begin{equation}
\label{asyklein}
\PR[\mathcal{Z}_{\infty} > \beta^{k-1/3}]= \PR[\text{Bin}(W_{\infty},p_{\infty})>0]+o(1),
\end{equation}
where $o(\cdot)$ does not depend on $k$.

In the same way,
\begin{equation}
\label{asyklein2}
\PR[\mathcal{Z}_{\infty} > \beta^{k-2/3}] =  \PR[\text{Bin}(W_{\infty},p_{\infty})>0]+o(1)
\end{equation}
for $\beta \to \infty$.
Plugging (\ref{asyklein}) and (\ref{asyklein2}) in equation~(\ref{nonconv0}) and taking into account (\ref{posk}) we see that
\[
\lim\limits_{\beta \to \infty} \sum_{k\in\Z } {\bf f}'(q)^{-k} P[\mathcal{Z}_{\infty} > \beta^{k-2/3}] 
= 
\lim\limits_{\beta \to \infty} \frac{1}{1-{\bf f}'(q)}P[\text{Bin}(W_{\infty},p_{\infty})>0]
\]
and
\[
\lim\limits_{\beta \to \infty} \sum_{k\in \Z} {\bf f}'(q)^{-k} P[\mathcal{Z}_{\infty} > \beta^{k-1/3}]
= 
\lim\limits_{\beta \to \infty}\frac{1}{1-{\bf f}'(q)}P[\text{Bin}(W_{\infty},p_{\infty})>0]\, .
\]
Hence, we would have
\[
\lim\limits_{\beta \to \infty} \frac{1}{1-{\bf f}'(q)}P[\text{Bin}(W_{\infty},p_{\infty})>0]=
\lim\limits_{\beta \to \infty} {\bf f}'(q)^{-1/3}\frac{1}{1-{\bf f}'(q)}P[\text{Bin}(W_{\infty},p_{\infty})>0]\, .
\]
This could only be possible if $P[\text{Bin}(W_{\infty},p_{\infty})>0] \to 0$ for $\beta \to \infty$, but we know that
\[
P[\text{Bin}(W_{\infty},p_{\infty})>0]> p_\infty \PR[W_{\infty}\geq 1]>c>0,
\]
where $c$ does not depend on $\beta$, see Lemma~\ref{minorWinfty}. This proves Theorem~\ref{non-conve}. \qed

In particular, if $\beta$ is large enough, 
$\IG(d_1,0,\LR_1)$ is not a stable law and this implies (vii) in Theorem \ref{infdivprop}.

\subsection{Proof of Theorem \ref{tightness}}\label{magnit}
We will show that
\begin{equation}\label{Deltatight}
\lim_{M\to \infty} \limsup_{n\to \infty} \PR\Bigl[\frac{\Delta_n}{n^{1/\gamma}}\notin [1/M,M]\Bigr]=0.
\end{equation}
This implies in particular that the family $(\Delta_n/n^{1/\gamma})_{n\geq 0}$ is tight.
We will then prove
\begin{equation}\label{Xtight}
\lim_{M\to \infty} \limsup_{n\to \infty} \PR\Bigl[\frac{|X_n|}{n^{\gamma}}\notin [1/M,M]\Bigr]=0,
\end{equation}
which implies that the family $(|X_n|/n^{\gamma})_{n\geq 0}$ is tight.
(\ref{magni}) will then shown to be a consequence of (\ref{Deltatight}) and (\ref{Xtight}).

To show (\ref{Deltatight}), note that for 
$n\in [{\bf f}'(q)^{-k},{\bf f}'(q)^{-(k+1)})$
\begin{align*}
&\PR \left[\frac{\Delta_n}{n^{1/\gamma}}\notin [1/M,M]\right]\\
\leq &\PR \left[\frac{\Delta_{{\bf f}'(q)^{-k}}}{({\bf f}'(q)^{-k}\rho C_a)^{1/\gamma}}< \frac{M}{({\bf f}'(q)\rho C_a)^{1/\gamma}}\right]+
\PR\left[\frac{\Delta_{{\bf f}'(q)^{-(k+1)}}}{({\bf f}'(q)^{k+1}\rho C_a)^{1/\gamma}}>\frac{1}{ M( {\bf f}'(q)^{-1}\rho C_a)^{1/\gamma}}]\right] \, . \\
\end{align*}
Using Theorem~\ref{subsequ} we get
\[
\limsup_n \PR\left[\frac{\Delta_n}{n^{1/\gamma}}\notin [1/M, M]\right]
\]
\[
\leq  \PR\left[Y_{d_{(\rho C_a)^{1/\gamma}},0,\LR_{(\rho C_a)^{1/\gamma}}} 
\notin \left[\frac{1}{M ({\bf f}'(q)^{-1} \rho C_a)^{1/\gamma}}, \frac{M}{({\bf f}'(q)\rho C_a)^{1/\gamma}}\right] \right]\, .
\]
where $Y_{d_{(\rho C_a)^{1/\gamma}},0,\LR_{(\rho C_a)^{1/\gamma}}}$ is a random variable with law $\IG(d_{(\rho C_a)^{1/\gamma}},0,\LR_{(\rho C_a)^{1/\gamma}})$. Here we used that the limiting law $\IG(d_x,0,\LR_x)$ is continuous, see~(\ref{contofL}), and has in particular no atom at $0$, so we get that 
\[
\lim_{M \to \infty}P[Y_{d_x,0,\LR_x} \notin[1/M,M]]=0,
\]
which proves (\ref{Deltatight}).

Let us prove (\ref{Xtight}). Let $n\geq 0$ and write $n^{\gamma}=\lambda_0 {\bf f}'(q)^{-i_0}$ for some $i_0\in \N$ and $\lambda_0 \in [1,1/{\bf f}'(q))$. Let $i\in \N$. To control the probability that $\vert X_n \vert $ is be much larger than $n^{\gamma}$, note that
\begin{align*}
\PR\Bigl[\frac{\vert X_n\vert }{n^{\gamma}} \geq \lambda_0^{-1} {\bf f}'(q)^{-i}\Bigr] & \leq \PR[\Delta_{\lfloor (\lambda_0^{-1} {\bf f}'(q)^{-i}) (\lambda_0 {\bf f}'(q)^{-i_0})\rfloor} <(\lambda_0 {\bf f}'(q)^{-i_0})^{1/\gamma}] \\
                      & = \PR\Bigl[\frac{\Delta_{\lfloor {\bf f}'(q)^{-i-i_0}\rfloor}}{(\rho C_a  {\bf f}'(q)^{-i-i_0})^{1/\gamma}} <(\lambda_0 \rho C_a {\bf f}'(q)^{-i} )^{-1/\gamma}\Bigr] .
\end{align*}

Hence for any $\epsilon>0$, and $i$ large enough such that $(\rho C_a {\bf f}'(q)^{-i} )^{-1/\gamma}<\epsilon$,
\[
\PR\Bigl[\frac{\vert X_n\vert }{n^{\gamma}} \geq {\bf f}'(q)^{-i-1}\Bigr] \leq \PR\Bigl[\frac{\vert X_n\vert }{n^{\gamma}} \geq \lambda_0^{-1} {\bf f}'(q)^{-i}\Bigr]  \leq \PR\Bigl[\frac{\Delta_{\lfloor {\bf f}'(q)^{-i-i_0}\rfloor}}{(\rho C_a  {\bf f}'(q)^{-i-i_0})^{1/\gamma}} <\epsilon\Bigr].
\]

Now, using Theorem~\ref{subsequ}, taking $n$ (i.e.~$i_0$) to infinity, we get that for any $\epsilon>0$,
\[
\text{for $i$ large enough,}\qquad \limsup_{n} \PR\Bigl[\frac{\vert X_n\vert }{n^{\gamma}} \geq {\bf f}'(q)^{-i-1}\Bigr] \leq P[Y_{d_1,0,\LR_1} \leq \epsilon],
\]
using~(\ref{contofL}) and hence
\begin{equation}
\label{limsuplimsup1}
\limsup_{M\to\infty} \limsup_{n\to \infty} \PR\Bigl[\frac{\vert X_n\vert }{n^{\gamma}} \geq M \Bigr] \leq    \limsup_{\epsilon \to 0}
P[Y_{d_1,0,\LR_1} \leq \epsilon] = 0\, .
\end{equation}

Next, we will consider the probability that $\vert X_n\vert $ is much smaller than $n^{\gamma}$. Let us denote 
\[
\text{Back}(n)=\max_{i<j\leq n} \left(\abs{X_i}-\abs{X_j}\right),
\]
the maximal backtracking of the random walk. It is easy to see that
\[
\text{Back}(n)\leq \max_{2\leq i\leq n}\left( \tau_i-\tau_{i-1}\right)\vee \tau_1 \, .
\]

Hence since $\tau_1$ and $\tau_2 - \tau_1$ have exponential moments
\begin{equation}
\label{tail_backtrack}
\PR\left[\text{Back}(n)\geq n^{\gamma/2}\right] \leq C n \exp(-c n^{\gamma/2}).
\end{equation}

If the walk is at a level inferior to $(1/M)n^{\gamma}$ at time $n$ and has not backtracked more than $n^{\gamma/2}$, it has not reached $(2/M) n^{\gamma}$. This implies that for all $M>0$,
\[
\PR\left[ \frac{\vert X_n\vert }{n^{\gamma}}<1/M\right]   \leq \PR[\text{Back}(n)\geq n^{\gamma/2}] + \PR\left[ \frac{\Delta_{\lfloor (2/M) n^{\gamma}\rfloor}}{(\rho C_a 2/M)^{1/\gamma}n}>(\rho C_a 2/M)^{1/\gamma}\right].
\]
Hence, using a reasoning similar to the proof of~(\ref{limsuplimsup1}), we have
\begin{equation}
\label{limsuplimsup2}
\lim_{M\to \infty} \limsup_{n \to \infty} \PR\left[ \frac{\vert X_n\vert}{n^{\gamma}}<1/M\right]  \leq \liminf_{M \to \infty} P[Y_{d_1,0,\LR_1} \geq M]=0.
\end{equation}

Using~ (\ref{limsuplimsup1}) and~(\ref{limsuplimsup2}), we get 
\begin{equation}
\label{limsuplimsup}
\lim_{M\to \infty} \limsup_{n \to \infty} \PR\left[ \frac{\vert X_n\vert }{n^{\gamma}}\notin [1/M,M]\right] =0,
\end{equation}
which shows (\ref{Xtight}) in Theorem~\ref{tightness}.

Let us prove (iii) in Theorem~\ref{tightness}. We have
\[
\PR\left[\lim\limits_{ n\to \infty} \frac{\ln\vert X_n\vert}{\ln n}\neq \gamma\right]\leq
\PR\left[ \limsup\limits_{ n\to \infty} \frac{\ln \vert X_n\vert }{\ln n} > \gamma
\right] + \lim\limits_{M \to \infty}
\PR\left[ \liminf\limits_{ n\to \infty} \frac{\vert X_n\vert }{n^{\gamma}} \leq
\frac{1}{M} \right]
\]
Using Fatou's Lemma,
\[
\PR\left[ \liminf\limits_{ n\to \infty} \frac{\vert X_n\vert }{n^{\gamma}} <
\frac{1}{M} \right] \leq \liminf_{n\to \infty} \PR\left[ \frac{\vert
X_n\vert}{n^{\gamma}}<1/M\right],
\]
and taking $M$ to infinity we get
\[
\PR\left[\lim\limits_{ n\to \infty} \frac{\ln\vert X_n\vert}{\ln n}\neq \gamma\right]\leq
\PR\left[ \limsup\limits_{ n\to \infty} \frac{\ln \vert X_n\vert }{\ln n} > \gamma
\right].
\]

Set $\epsilon>0$, we have
\begin{align*}
\PR\left[ \limsup\limits_{ n\to \infty} \frac{\ln \vert X_n\vert }{\ln n} >
(1+2\epsilon)\gamma \right]
&\leq \PR\left[\limsup\limits_{n\to \infty} \frac{\vert X_n
\vert}{n^{(1+\epsilon)\gamma}}\geq 1\right] \\
&\leq \PR\left[\limsup\limits_{n\to \infty} \frac{\sup_{i\leq n} \vert X_i
\vert}{n^{(1+\epsilon)\gamma}} \geq 1\right].
\end{align*}

Define
\[
D'(n)=\Bigl\{ \max_{\displaystyle{\ell\in \cup_{i=0, \ldots, \Delta_n^Y}L_{Y_i}}} H(\ell) \leq \frac {4\ln n} {- \ln {\bf f}'(q)}\Bigr\}.
\]

Denoting $t(n)$ such that $\sigma_{\tau_{t(n)}}\leq n < \sigma_{\tau_{t(n)+1}}$, we have
\begin{equation}
\label{maj1fin}
\text{for $\omega \in D'(n)$,}\qquad \qquad \vert X_{\sigma_{\tau_{t(n)}}} \vert \leq
\vert X_n\vert \leq \vert X_{\sigma_{\tau_{t(n)+1}}} \vert +\frac {4\ln n}{-\ln {\bf
f}'(q)},
\end{equation}
and since using $B_1(n)$ defined right above Lemma~\ref{B1}, we get
\begin{equation}
\label{maj2fin}
\text{for $\omega \in B_1(n)$,} \qquad \qquad \vert X_{\sigma_{\tau_{t(n)+1}}} \vert \leq
\vert X_{\sigma_{\tau_{t(n)}}} \vert +n^{\epsilon}.
\end{equation}

We have using Lemma~\ref{A1} and~(\ref{tailmaj})
\begin{align*}
\PR[D'(n)^c]&\leq \PR[A_1(n)^c] + \PR\Bigl[A_1(n),\card \cup_{i=1}^{\Delta_n^Y} L_{Y_i} > n^2 \Bigr] + \PR\Bigl[\card \cup_{i=1}^{\Delta_n^Y} L_{Y_i} \leq n^2,D'(n)^c\Bigr] \\
&\leq O(n^{-2})+ \PR\Bigl[\sum_{i=0}^{C_1n} \card L_0^{(i)} > n^2\Bigr] + n^2 {\bf Q}\Bigl[H\geq \frac{4 \ln n}{- \ln {\bf f}'(q)}\Bigr]\\
&\leq O(n^{-2})+ n^{-4} \text{Var}\Bigl(\sum_{i=0}^{C_1n} \card L_0^{(i)}\Bigr)+ n^2n^{-4}=O(n^{-2}),
\end{align*}
where we used that $\card L_0^{(i)}$ are i.i.d.~random variables which are $L^2$ since they are stochastically dominated by the number of offspring $Z$ which is $L^2$ by our assumption.

By~Lemma \ref{B1}, the previous estimate and Borel-Cantelli we have $\omega\in B_1(n)\cap
D(n)$ asymptotically, we get recalling~(\ref{maj1fin}) and~(\ref{maj2fin}) that for $\epsilon<\gamma$
\[
\PR\left[\limsup\limits_{n\to \infty} \frac{\sup_{i\leq n} \vert X_i
\vert}{n^{(1+\epsilon)\gamma}}\geq 1\right]
\leq \PR\left[\liminf \limits_{n\to \infty}  \Bigl(\frac{\vert X_{\sigma_{\tau_{t(n)}}}
\vert}{n^{(1+\epsilon)\gamma}}+o(1)\Bigr) \geq 1\right].
\]

Since $\vert X_{\sigma_{\tau_{t(n)}}}\vert \leq \vert X_n \vert$ we have
\begin{align*}
    \PR\left[\lim_{n\to \infty} \frac{\sup_{i\leq n} \vert X_i
\vert}{n^{(1+\epsilon)\gamma}}\geq 1\right]& \leq \PR\left[\liminf \limits_{n\to \infty} 
\frac{\vert X_{\sigma_{\tau_{t(n)}}} \vert}{n^{(1+\epsilon)\gamma}}\geq 1\right] \\
& \leq\liminf\limits_{n\to \infty} \PR\left[ \frac{\vert X_{n}
\vert}{n^{(1+\epsilon)\gamma}}\geq 1\right]\\
&\leq \liminf\limits_{M\to \infty}  \liminf\limits_{n\to \infty}  \PR\left[ \frac{\vert
X_{n} \vert}{n^{\gamma}}\geq M\right]=0,
\end{align*}
where we used Fatou's Lemma and (ii) in Theorem~\ref{tightness}.

Now since this result is true for all $\epsilon>0$ small enough we get
\begin{align*}
\PR\left[\lim\limits_{ n\to \infty} \frac{\ln\vert X_n\vert}{\ln n}\neq \gamma\right]
\leq&
\PR\left[ \limsup\limits_{ n\to \infty} \frac{\ln \vert X_n\vert }{\ln n} > \gamma
\right] \\
\leq& \liminf \limits_{\epsilon \to 0} \PR\left[ \limsup\limits_{ n\to \infty} \frac{\ln
\vert X_n\vert }{\ln n} > (1+2\epsilon)\gamma \right] =0,
\end{align*}
which finishes the proof of~(\ref{magni}). \qed

\subsection{Proof of Theorem \ref{infdivprop} }\label{limprop}
It remains to show (iv), (v), (vi) and (viii) in Theorem \ref{infdivprop}. 
\begin{proof} We start by proving (viii). Recall
\[
\mathcal{Z}_{\infty}=\frac{S_{\infty}}{1-\beta^{-1}} \sum_{i=1}^{\text{Bin}(W_{\infty},p_{\infty})} {\bf e}_i,
\]
and in particular the fact that $S_{\infty}$, $W_\infty$ and the i.i.d.~exponential random variables ${\bf e}_i$ are independent.
Let $\widetilde{S}_\infty = S_\infty/p_\infty$ and denote its law by $\nu_\infty$. Further, let
$\alpha_k = \PR[\text{Bin}(W_{\infty},p_{\infty})= k]$, $k=0,1,2, \ldots $. Conditioned on $\widetilde{S}_\infty $ and 
$\text{Bin}(W_{\infty},p_{\infty})$, the law of $\mathcal{Z}_{\infty}$ is a Gamma distribution. More precisely, for any test function $\varphi$,
\begin{align*}
\ES[\varphi(\mathcal{Z}_\infty)]&=
\alpha_0\varphi(0) + \sum\limits_{k=1}^\infty \int\limits_0^\infty\left(\int\limits_0^\infty \varphi(su) e^{-u}
\frac{u^{k-1}}{(k-1)!}du\right)\nu_\infty(ds)\alpha_k \\
&= \alpha_0\varphi(0) + \sum\limits_{k=1}^\infty \int\limits_0^\infty\left(\int\limits_0^\infty \varphi(v) e^{-v/s}
\frac{v^{k-1}}{(k-1)!}\frac{1}{s^k} dv\right)\nu_\infty(ds)\alpha_k \\
&= \alpha_0\varphi(0) + \int\limits_0^\infty \varphi(v)\sum\limits_{k=1}^\infty 
\alpha_k \frac{v^{k-1}}{(k-1)!}E_{\bf Q}\left[e^{-v/\widetilde{S}_\infty}\left(\widetilde{S}_\infty\right)^{-k}\right] dv
\end{align*}

We point out that, due to Lemma \ref{minorWinfty}, we have $0 < \alpha_0 < 1$.
Hence, $\mathcal{Z}_\infty$ has an atom of mass $\alpha_0$ at $0$ and the conditioned law of $\mathcal{Z}_\infty$, conditioned on $\mathcal{Z}_\infty > 0$, has the density $\psi$, where
\begin{align*}
\psi(v) &= \sum\limits_{k=1}^\infty \alpha_k \frac{v^{k-1}}{(k-1)!} E_{\bf Q}\left[e^{-v/\widetilde{S}_\infty}\left(\widetilde{S}_\infty\right)^{-k}\right]\\
&= E_{\bf Q}\left[\frac{1}{\widetilde{S}_\infty}e^{-v/\widetilde{S}_\infty}
\sum\limits_{k=1}^\infty \frac{\alpha_k}{(k-1)!}\left(\frac{v}{\widetilde{S}_\infty}\right)^{k-1}\right]
\end{align*}

Using the fact that $S_\infty \geq 2$ and $\limsup \frac{1}{k} \log \alpha_k < 0$ (see Lemma \ref{tailW}), we see that 
$\psi$ is bounded and $C_\infty$. Note that
since $S_\infty $ and $W_\infty$ have finite expectation, $\mathcal{Z}_\infty$ has also finite expectation and in particular
\begin{equation}\label{weakb}
\int\limits_0^\infty v  \psi(v)dv < \infty \, .
\end{equation}
This shows (viii) in Theorem \ref{infdivprop}. We will later need that
\begin{equation}\label{derivbound}
\int\limits_0^\infty v^{1+\gamma}  \vert \psi^\prime(v)\vert dv < \infty \, .
\end{equation}
To show (\ref{derivbound}), note that $\psi^\prime(v)$ equals
$$
 E_{\bf Q}\left[\frac{-1}{(\widetilde{S}_\infty) ^2}e^{-v/\widetilde{S}_\infty}
\sum\limits_{k=1}^\infty \frac{\alpha_k}{(k-1)!}\left(\frac{v}{\widetilde{S}_\infty}\right)^{k-1}
+ \frac{1}{(\widetilde{S}_\infty)^2}e^{-v/\widetilde{S}_\infty} 
\sum\limits_{k=2}^\infty \frac{\alpha_k}{(k-2)!}\left(\frac{v}{\widetilde{S}_\infty}\right)^{k-2}
\right]
$$
which implies, with $\alpha : = \limsup\left(\alpha_k^{1/k}\right) < 1$,
$$
\vert \psi^\prime(v)\vert \leq C_1 E_{\bf Q}\left[\frac{1}{(\widetilde{S}_\infty) ^2}e^{-(1-\alpha) v/\widetilde{S}_\infty}\right]\, .
$$
But, for $\delta \in (0,1)$,
$$
E_{\bf Q}\left[\frac{1}{(\widetilde{S}_\infty) ^2}e^{-(1-\alpha) v/\widetilde{S}_\infty}\right] 
$$
\begin{align*}
&\leq
E_{\bf Q}\left[e^{-(1-\alpha) v/\widetilde{S}_\infty}\1{\widetilde{S}_\infty < v^{1-\delta}} \right]
+ E_{\bf Q}\left[\frac{1}{(\widetilde{S}_\infty) ^2} \1{\widetilde{S}_\infty \geq v^{1-\delta}} \right]\\
&\leq
e^{-(1-\alpha) v^\delta}  + \frac{1}{v^{3-3\delta}}  E_{\bf Q}\left[\widetilde{S}_\infty\right]
\end{align*}
Now, choosing $\delta$ small enough such that $3\delta + \gamma < 1$ yields (\ref{derivbound}).

We next show that the function $\LR_1$ is absolutely continuous.  
Recalling (i) in Theorem \ref{infdivprop} we see that, for $x > 0$,
\begin{align*}
- (1-\beta^{-\gamma})^{-1} \LR_1(x) &= \sum_{k\in \Z}\beta^{\gamma k}\overline{F}_{\infty}(x \beta^k)\\
&= \sum_{k\in \Z}\beta^{\gamma k} \int\limits_{x\beta^k}^\infty \psi(v) dv \\
&= \int\limits_0^\infty \left( \sum_{k\in \Z}\beta^{\gamma k}\1{v \geq x\beta^k}\right)\psi(v) dv\, .
\end{align*}

Now,
\[
\sum_{k\in \Z}\beta^{\gamma k}\1{v \geq x\beta^k} = \sum\limits_{k \leq K(v/x)}\beta^{\gamma k} 
=:g\left(\frac{v}{x}\right)
\]
where, setting $u= \frac{v}{x}$,  $K(u)= \lfloor \frac{\log u} {\log \beta}\rfloor$. An easy computation gives
\begin{equation}\label{gform}
g(u) = \frac{\beta^{\gamma(K(u)+1)}}{\beta^\gamma -1}\, .
\end{equation}
Hence, for $x > 0$,
\begin{equation}\label{Lexpress}
- (1-\beta^{-\gamma})^{-1} \LR_1(x) = \int\limits_0^\infty g\left(\frac{v}{x}\right) \psi(v) dv = x \cdot \int\limits_0^\infty g(u) \psi(xu)du\, .
\end{equation}
The last formula shows, noting that $g(u)$ is of order $u^\gamma$ for $u \to \infty$ and recalling (\ref{weakb})
and (\ref{derivbound}), 
that $\LR_1$ is $C_1$ and in particular absolutely continuous. Due to the scaling relation (ii), the same holds true for $\LR_\lambda$. This shows (iv) in Theorem \ref{infdivprop}.

Due to (\ref{gform}), we have
\[
\frac{1}{\beta^{\gamma} -1}u^\gamma \leq g(u) \leq \frac{\beta^\gamma}{\beta^{\gamma} -1}u^\gamma
\]
Plugging this into the first equality in (\ref{Lexpress}) yields (\ref{Lbounds}).
This proves (v) in Theorem \ref{infdivprop}. To show (vi), we use a result of \cite{Tucker} which says that an infinite divisible law is absolutely continuous if the absolutely continuous component $\LR^{\text{ac}}$ of its L\'evy spectral function satisfies $\int\limits_{-\infty}^\infty d\LR^{\text{ac}}(x)= \infty$, see also \cite{Petrov}, p. 37. In our case, this is satisfied since $\LR_1^{\text{ac}}(x) = \LR_1(x)$ and $\lim\limits_{x \to 0} \LR_1(x) = -\infty$. Further, the statement about the moments of $\mu_\lambda$ follows from the corresponding statement about the moments of $\LR_\lambda$, see \cite{Ramachandran} or \cite{Petrov}, p. 36.
\end{proof}

\section*{Acknowledgements}

A.F. would like to thank his advisor Christophe Sabot for his support and many useful discussions.

We are grateful to the Institut Camille Jordan, Universit\'e de Lyon 1, the Institut f\"ur Mathematische Statistik, Universit\"at M\"unster, and Courant Institute of Mathematical Sciences, New York University, for giving A.F. the possibility to visit M\"unster and A.F. and N.G. the possibility to visit New York.

We also thank the referee for his careful reading.


\begin{thebibliography}{99}
  \bibitem{aidekon} Aid\'ekon, E. (2008). Transient random walks in random environment on a Galton-Watson tree. {\it Probab. Theory Related Fields}. {\bf 142}, 525--559.
  \bibitem{BAMR} Ben Arous, G., Molchanov S. and  Ram\'irez A.F. (2005). Transition from the annealed to the quenched asymptotics for a random walk on random obstacles,  {\it Ann. of Probab.} {\bf 33(6)}, 2149--2187. 
  \bibitem{BenarousHammond} Ben Arous, G., Hammond, A. Stable limit laws for randomly biased random walks on trees. In preparation.
\bibitem{BenarousCerny} Ben Arous, G., Cerny, J. {\it Dynamics of trap models}, \'Ecole de Physique des Houches, Session LXXXIII, (2006),  {\it Mathematical Statistical Physics}, 331--391. Elsevier, Amsterdam.
\bibitem{Berger} Berger, N., Gantert, N. and Peres, Y. (2003). The speed of biased random walk on percolation clusters. {\it Probab. Theory Related Fields} {\bf 126 (2)}, 221--242.
  \bibitem{Bouchaud} Bouchaud, J. P. (1992). Weak ergodicity breaking and aging in disordered systems. {\it J. Phys. I (France)} 
{\bf 2} 1705--1713.
\bibitem{Dhar} Dhar, D. (1984). Diffusion and drift on percolation networks in an external field. {\it J. of Phys. A}. {\bf 17 (5)}, 257--259.
  \bibitem{ESZ} Enriquez, N., Sabot, C. and Zindy, O. (2007). A probabilistic representation of constants in Kesten's renewal theorem. Available at arXiv:math/0703660.
  \bibitem{ESZ2} Enriquez, N., Sabot, C. and Zindy, O. (2007). Limit laws for transient random walks in random environment on $\Z$. Available at arXiv:math/0703648.
  \bibitem{Geiger} Geiger, J. and Kersting, G. (1998). The Galton-Watson tree conditioned on its height. {\it Proc. 7th Vilnius Conv. on Prob. and Math. Stat.}. Editors: B. Grigelionis, J. Kubilius, V. Paulauskas, H. Pragarauskas and V. Statulevicius. VSP, Zeist, The Netherlands.
  \bibitem{Heathcote} Heathcote, C.R., Seneta, E. and Vere-Jones, D. (1967). A refinement of two theorems in the theory of branching processes, {\it Teor. Veroyatn. Primen.} {\bf 12} , 341--346.
  \bibitem{Hu} Hu, Y. and Shi, Z. (2007). A subdiffusive behaviour of recurrent random walk in random environment on a regular tree. {\it Probab. Theory Related Fields}. {\bf 138}, 521--549.
  \bibitem{KKS} Kesten, K., Kozlov, M.V. and Spitzer, F. (1975). A limit law for random walk in a random environment. {\it Compos. Math.} {\bf 30}, 145--168.
  \bibitem{lycap} Lyons, R. (1992). Random walks, capacity and percolation on trees. {\it Annals of Probability} {\bf 20}, 2043--2088.
  \bibitem{LP} Lyons, R. and Peres, Y. {\it Probability on trees and networks}. Book in progress. Available at \url{http://mypage.iu.edu/~rdlyons/prbtree/prbtree.html}
  \bibitem{LPP} Lyons, R., Pemantle, R. and Peres, Y. (1996). Biased random walks on Galton-Watson trees. {\it Probab. Theory Related Fields} {\bf 106}, 254--268.
  \bibitem{Petrov} Petrov, V.V. (1975). Sums of independent random variables, Springer-Verlag, New-York.
  \bibitem{PZ} Peres, Y. and Zeitouni, O. (2008) A Central limit theorem for biased random walks on Galton-Watson trees. 
{\it Probab. Theory Related Fields}. {\bf 140}, 595--629. 
   \bibitem{Ramachandran}  Ramachandran, B. (1969). On characteristic functions and moments. {\it Sankhya}. {\bf A31}, No. 1, 1--12.
  \bibitem{Sznitman} Sznitman, A.-S. (2003). On the anisotropic random walk on the percolation cluster. {\it Commun. Math. Phys.} {\bf 240 (1-2)}, 123--148.
  \bibitem{AlainHouches} Sznitman, A.-S. {\it Random Motions in Random Media}, \'Ecole de Physique des Houches, Session LXXXIII, (2006),  {\it Mathematical Statistical Physics}, 219--237. Elsevier, Amsterdam.
\bibitem{SZ} Sznitman, A.-S. and Zerner, M. (1999). A law of large numbers for random walks in random environment. {\it Ann. Probab.} {\bf 27 (4)}, 1851--1869.
  \bibitem{Tucker}  Tucker, H. G. (1965). On a necessary and sufficient condition that an infinitely divisible distribution be absolutely continuous. {\it Trans. Amer. Math. Soc.}. {\bf 118}, No. 6,  316--330.
  \bibitem{Zeitouni} Zeitouni, O. (2004). {\it Random Walks in Random Environment}, XXXI summer school in probability, St Flour (2001),  {\it Lecture Notes in Math.} {\bf 1837}, 193--312. Springer, Berlin.

\end{thebibliography}
\end{document}